\newtheorem{theorem}{Theorem}[section]
\newtheorem{question}[theorem]{Question}
\newtheorem{lemma}[theorem]{Lemma} %%Delete [thm] to re-start numbering
\newtheorem{proposition}[theorem]{Proposition} %%Delete [thm] to re-start
\newtheorem{thmletter}{Theorem}
\newtheorem{corolletter}[thmletter]{Corollary}
\newtheorem{corollary}[theorem]{Corollary} %%Delete [thm] to re-start
\newtheorem{definition}[theorem]{Definition}
\newtheorem{example}[theorem]{Example}
\newtheoremstyle{remark}{3pt}{3pt}{}{10pt}{\itshape}{.}{.5em}{}
\theoremstyle{remark}
\newtheorem{remark}[theorem]{Remark}
\newcommand{\p}[1]{\noindent {\newline\bfseries #1.}}
\newcommand{\emo}{\operatorname{End}}
\newcommand{\out}{\operatorname{Out}}
\newcommand{\aut}{\operatorname{Aut}}
\newcommand{\inn}{\operatorname{Inn}}
\newcommand{\fix}{\operatorname{Fix}}
\newcommand{\im}{\operatorname{Im}}
\newcommand{\rk}{\operatorname{rank}}
\newcommand{\MOFix}{\operatorname{MOFix}}
\title[Fixed points of endomorphisms for $F_2$]{Fixed points and stable images of endomorphisms for the free group of rank two}
\author{Laura Ciobanu} 
\address{School of Mathematical and Computer Sciences,
 Heriot--Watt University, 
 Edinburgh EH14 4AS,
 Scotland}
\email{L.Ciobanu@hw.ac.uk}
\author{Alan D. Logan}
\address{School of Mathematical and Computer Sciences,
 Heriot--Watt University, 
 Edinburgh EH14 4AS,
 Scotland}
\email{A.Logan@hw.ac.uk}
\subjclass[2010]{20F65, 20F10, 20E05}
\keywords{Free group, Fixed subgroup, Stable image, Endomorphisms}
\begin{document}
\maketitle

\begin{abstract}
We give an algorithm which computes the fixed subgroup and the stable image for any endomorphism of the free group of rank two $F_2$, answering for $F_2$ a question posed by Stallings in 1984 and a question of Ventura.
\end{abstract}

\section{Introduction}
\label{introduction}
Let $F$ be a finitely generated free group, and let $\emo(F)$ and $\aut(F)$ be the set of endomorphisms and automorphisms of $F$, respectively. For an endomorphism $\psi \in \emo(F)$, the fixed subgroup $\fix(\psi)$ of $\psi$ is defined as
\[
\fix(\psi):=\{x\in F\mid\psi(x)=x\}.
\]
Fixed subgroups generated a wide body of work in the 1970s--1990s in relation to the ``Scott conjecture'' (see \cite{Ventura2002Fixed}), which stated that such subgroups are finitely generated, so $\rk(\fix(\psi))<\infty$. For $\psi \in \aut(F)$, Gersten showed that $\fix(\psi)$ is finitely generated \cite{Gersten1987Fixed}, and then in their seminal paper investigating Thurston's train track maps \cite{Bestvina1992Traintracks}, Bestvina and Handel strengthened Gersten's result by proving that $\rk(\fix(\psi)) \leq \rk(F)$. Imrich and Turner extended this to show that $\rk(\fix(\psi))\leq\rk(F)$ for all $\psi \in \emo(F)$ \cite{Imrich1989Endomorphisms}.

While the above results elucidated the rank bounds for a fixed subgroup, finding the precise rank or the generators in the case of an arbitrary endomorphism remains so far intractable and the question posed by Stallings in 1984 is still open \cite[Problems P3 \& 5]{Stallings1987Graphical}.
\begin{question}[Stallings, 1984]
\label{Qn:Stallings}
Does there exist an algorithm with input an endomorphism $\psi\in\emo(F)$ and output $\rk(\fix(\psi))$?
\end{question}
For $\psi\in\aut(F)$, this question was resolved positively, and moreover an algorithm with output a \emph{basis} for $\fix(\psi)$ was shown to exist \cite{Bogopolski2016algorithm}
(see also
%(see
\cite{Feighn2018algorithmic}).
%for an alternative proof).

The main result of this paper provides a positive answer to Question \ref{Qn:Stallings} for any endomorphism $\psi\in\emo(F_2)$, so when $F=F_2$ is the free group of rank two, again by giving a basis for $\fix(\psi)$.

\begin{thmletter}
\label{thm:basisexists}
There exists an algorithm with input an endomorphism $\psi\in\emo(F_2)$ and output a basis for $\fix(\psi)$.
\end{thmletter}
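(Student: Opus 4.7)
The plan is to reduce Theorem~A to the known algorithm for automorphisms of $F_2$ \cite{Bogopolski2016algorithm} by computing the \emph{stable image} $H_\infty := \bigcap_{n\geq 0}\psi^n(F_2)$. The key observation is that any $x \in \fix(\psi)$ satisfies $x = \psi^n(x) \in \psi^n(F_2)$ for every $n$, so $\fix(\psi) \subseteq H_\infty$. If the descending chain $F_2 \supseteq \psi(F_2)\supseteq \psi^2(F_2) \supseteq \cdots$ stabilizes, say $\psi^N(F_2) = \psi^{N+1}(F_2) = H_\infty$, then $\psi|_{H_\infty}$ is a surjective endomorphism of a finitely generated free group and therefore, by the Hopf property, an automorphism. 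Picking a basis of $H_\infty$ (computable from its Stallings graph) identifies $\psi|_{H_\infty}$ with an automorphism of $F_k$, $k \leq 2$, and the algorithm of \cite{Bogopolski2016algorithm} outputs a basis for $\fix(\psi|_{H_\infty}) = \fix(\psi)$.

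The algorithmic core is therefore the computation of $H_\infty$. I would split by the eventual rank $r_\infty = \lim_n \rk(\psi^n(F_2)) \in \{0,1,2\}$; this limit is well-defined because the sequence is non-increasing (as $\psi^{n+1}(F_2)$ is generated by the $\psi$-images of generators of $\psi^n(F_2)$). The case $r_\infty = 0$ is trivial. For $r_\infty = 1$, once some $\psi^n(F_2) = \langle u \rangle$ is cyclic we have $\psi(u) = u^k$ for a computable $k \in \mathbb{Z}$: the chain stabilizes at $\langle u \rangle$ when $|k| = 1$ (and $\fix(\psi)$ is read off immediately from the sign of $k$), while for $|k| \geq 2$ the subsequent subgroups $\langle u^{k^j}\rangle$ have trivial intersection, so $H_\infty = \{1\}$. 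For $r_\infty = 2$ with $\psi(F_2) = F_2$ the map $\psi$ is already an automorphism, and we are done by invoking the known algorithm directly.

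The main obstacle is the remaining case: $r_\infty = 2$ with $\psi(F_2)$ a proper rank-$2$ subgroup of $F_2$, which is of infinite index by the Schreier formula. Since $F_2$ admits infinite strictly descending chains of rank-$2$ subgroups (for instance $\langle a^{2^n}, b\rangle$), stabilization cannot be taken for granted. My plan would be to compute iteratively the Stallings graph of each $\psi^n(F_2)$ by applying $\psi$ to its generators and folding, checking the equality $\psi^n(F_2) = \psi^{n+1}(F_2)$ at each step, and to exhibit a complexity measure on these graphs (such as the number of vertices, or a Whitehead-reduced total length of a generating set) whose evolution under $\psi$ furnishes an a priori bound on the stabilization time whenever the chain stabilizes at all. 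Non-stabilizing chains must be detected separately; here I would hope to prove that non-stabilization in the rank-$2$ setting forces $H_\infty$ to collapse to a cyclic or trivial subgroup, reducing to an already-handled case. The restriction to $F_2$ enters crucially at this step, since its subgroup lattice and the space of its endomorphisms admit a concrete classification that is not available in higher rank.
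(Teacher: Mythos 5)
Your reduction is correct as far as it goes: if one could compute a basis for the stable image $H_\infty = \psi^\infty(F_2)$, then applying the automorphism algorithm of \cite{Bogopolski2016algorithm} to $\psi|_{H_\infty}$ would indeed finish. This observation appears in Section~\ref{sec:applications} of the paper. But the gap in your proposal is precisely in the case you flag as the ``main obstacle,'' and it is fatal, not merely technical.

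When $\psi$ is injective and non-surjective --- which is the core case of the theorem --- the chain $F_2 \supsetneq \psi(F_2) \supsetneq \psi^2(F_2) \supsetneq \cdots$ is \emph{strictly} descending (injectivity forces proper containment at every step), so it never stabilizes, and each term has rank exactly $2$. Your iterative ``fold and test equality'' loop therefore never terminates, and your hoped-for complexity measure bounding stabilization time cannot exist. Moreover, the key fact you say you ``would hope to prove'' --- that non-stabilization forces $H_\infty$ to collapse to cyclic or trivial --- is indeed true (it follows from Turner's theorem that $\psi^\infty(F_2)$ is a proper retract of $F_2$, hence cyclic or trivial), but knowing this abstractly does not reduce to an ``already-handled case.'' Your $r_\infty \leq 1$ branch only fires once some \emph{finite} stage $\psi^n(F_2)$ is cyclic, which never happens for injective $\psi$. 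So you are left needing to exhibit a generator of a cyclic $H_\infty$ that sits at infinite depth inside the chain, and no amount of Stallings folding on the finite stages gives access to it.

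This is exactly where the paper's entire machinery lives. The paper does not compute $H_\infty$ and then extract $\fix(\psi)$; it does the reverse. For injective non-surjective $\psi$ it first computes the \emph{outer fixed points} of $\psi$ (Sections~\ref{sec:ofp1}--\ref{sec:classify}), using Mutanguha's characterization of hyperbolicity of the mapping torus $M_\psi$ together with a Magnus-style HNN analysis and a classification of primitive elements in $F_2$; it then descends from outer fixed points to genuine fixed points by solving specific instances of the endomorphism-twisted conjugacy problem (Section~\ref{sec:FixedPoints}). Only afterwards is the stable image recovered, via the identity $\psi^\infty(F_2) = \fix(\psi^2)$ (Corollary~\ref{corol:stableimage}). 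In short: the direction of reduction in your proposal is the one the paper explicitly notes is ``harder,'' and the injective non-surjective case that you leave as a hope is the entire content of the result.
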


\p{Stable images} A consequence of Theorem \ref{thm:basisexists} is that we can to compute the \emph{stable image} of $\psi\in\operatorname{End}(F)$, which is the subgroup $\psi^{\infty}(F):=\cap_{i=1}^{\infty}\psi^i(F)$ of $F$. This is the key object
used in Imrich and Turner's paper mentioned above,
%studied in \cite{Imrich1989Endomorphisms}
%(see above),
and the computation of its basis is an open question of Ventura \cite[Abstract 3.24, Problem 4.6]{Dagstuhl2019}. %As a consequence , we prove that one can compute the stable image for $\psi \in \emo(F_2)$.

\begin{corolletter}
\label{corol:stableimage}
There exists an algorithm with input an endomorphism $\psi\in\emo(F_2)$ and output a basis for $\psi^{\infty}(F_2)$.
\end{corolletter}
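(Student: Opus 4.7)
The plan is to derive Corollary~\ref{corol:stableimage} from Theorem~\ref{thm:basisexists} by splitting on whether $\phi$ is an automorphism. Detecting this is straightforward: compute the Stallings graph of $\phi(F_2)=\langle\phi(a),\phi(b)\rangle$ and test whether it equals the two-petalled rose. If $\phi\in\aut(F_2)$, then $\phi^i(F_2)=F_2$ for every $i$ and $\phi^\infty(F_2)=F_2$ admits the basis $\{a,b\}$.

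The crux is the non-automorphism case, where I would argue that $\rk(\phi^\infty(F_2))\leq 1$. When $\phi$ fails to be injective, this follows immediately, because Hopficity of $F_2$ forces $\rk(\phi(F_2))\leq 1$, and $\phi^\infty(F_2)\leq\phi(F_2)$. The substantive case, and the main obstacle of the proof, is when $\phi$ is injective but not surjective: here the chain $\phi(F_2)\supsetneq\phi^2(F_2)\supsetneq\cdots$ may consist of rank-two subgroups at every finite stage, yet I expect the intersection to collapse to rank at most~$1$. The strategy I would pursue is contrapositive: a rank-two $\phi^\infty(F_2)$ combined with Imrich--Turner (which makes $\phi|_{\phi^\infty(F_2)}$ an automorphism of a rank-two free group) and the global injectivity of $\phi$ should force, via a Hopf-style back-substitution argument inside the Stallings graphs, that $\phi$ itself is surjective, contradicting the standing assumption. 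This step is specific to $F_2$; the analogous dichotomy fails in $F_n$ for $n\geq 3$ (e.g.\ $\phi(a)=a,\phi(b)=b,\phi(c)=c^2$ on $F_3$ has $\phi^\infty=\langle a,b\rangle$), which is presumably why the general question of Ventura remains open.

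Granted this rank bound, Corollary~\ref{corol:stableimage} follows quickly. By Imrich--Turner the restriction $\bar\phi:=\phi|_{\phi^\infty(F_2)}$ is an automorphism of $\phi^\infty(F_2)$; since the latter has rank at most~$1$, its automorphism group has order at most~$2$, so $\bar\phi^2=\mathrm{id}$ and therefore $\phi^\infty(F_2)\subseteq\fix(\phi^2)$. The reverse inclusion $\fix(\phi^2)\subseteq\phi^\infty(F_2)$ is automatic, since any element fixed by $\phi^2$ lies in $\phi^{2k}(F_2)$ for every $k\geq 0$ and the subsequence of even iterates has the same intersection as the full chain. Hence $\phi^\infty(F_2)=\fix(\phi^2)$, and a basis is produced by applying Theorem~\ref{thm:basisexists} to the endomorphism $\phi^2\in\emo(F_2)$.
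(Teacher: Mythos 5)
Your overall architecture matches the paper's: reduce to the identity $\psi^\infty(F_2)=\fix(\psi^2)$ via Imrich--Turner and then invoke Theorem~\ref{thm:basisexists}. The surjective case, the non-injective case (Hopficity forces $\rk(\psi(F_2))\leq 1$, hence $\rk(\psi^\infty(F_2))\leq 1$), and the two inclusions $\psi^\infty(F_2)\subseteq\fix(\psi^2)$ (automorphisms of $\{1\}$ or $\mathbb{Z}$ are involutions) and $\fix(\psi^2)\subseteq(\psi^2)^\infty(F_2)=\psi^\infty(F_2)$ are all correct and essentially identical to the paper's.

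However, there is a genuine gap exactly where you flag ``the main obstacle'': you never actually prove that $\rk(\psi^\infty(F_2))\leq 1$ when $\psi$ is injective but not surjective. You only sketch a contrapositive strategy (``a Hopf-style back-substitution argument inside the Stallings graphs''), preceded by ``I expect'' and ``I would pursue.'' Note that Hopficity alone does not obviously close this: the chain $F_2\supsetneq\psi(F_2)\supsetneq\psi^2(F_2)\supsetneq\cdots$ can consist entirely of rank-$2$ subgroups, and no individual $\psi^i$ contradicts Hopficity; the difficulty is precisely in controlling the \emph{intersection}. The paper fills this hole by citing Turner's theorem that for a non-surjective $\psi\in\emo(F_2)$ the stable image $\psi^\infty(F_2)$ is a \emph{proper retract} of $F_2$ (equivalently, a proper free factor when $\psi$ is injective), and proper retracts of $F_2$ are trivial or infinite cyclic. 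If you want to make your contrapositive sketch rigorous without Turner's retract theorem, you would need to establish that a rank-$2$ stable image is a free factor of $F_2$ (hence all of $F_2$) and that this forces surjectivity of $\psi$ itself --- this is not a formality, and is the content your argument is currently missing.
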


\p{Endomorphisms versus automorphisms}
There is a rich theory of automorphisms of free groups, but the theory of endomorphisms of free groups is far less developed.
This ``endomorphism vs automorphism'' disparity is one of the reasons our paper is longer and more involved than one might expect for $F_2$.

For example, if the ``endomorphism-twisted conjugacy problem'' for $F_2$ is shown to be decidable in the future, then Section \ref{sec:solvingInstances} can be omitted. Meanwhile, although the automorphism-twisted conjugacy problem is decidable \cite{Bogopolski2006Conjugacy}, has been investigated extensively, and generalised to other groups \cite{Ladra2011generalized} \cite{sankaran2020twisted} \cite{gonccalves2020twisted}, there is only sparse literature on the endomorphism-twisted conjugacy problem \cite{Kim2016Twisted}.

Another example of this disparity concerns train track maps \cite{Bestvina1992Traintracks}: these have been used for around 30 years as the topological setting for free group automorphisms, and were essential in the answer of Question \ref{Qn:Stallings} for automorphisms. On the other hand, train track maps for endomorphisms have only recently been considered, originally in an unpublished preprint of Reynolds \cite{reynolds2010dynamics}, then by Mutanguha \cite{Mutanguha2018Hyperbolic} \cite{mutanguha2019irreducible} \cite{mutanguha2020dynamics}, and this work of Mutanguha gives us the central algorithm of our paper.

%order of ``no indent'' and ``new line'' matters
\noindent{\newline\bfseries The free group of rank two?}
Throughout this paper we use properties unique to the free group of rank two. For example, the description of primitive elements of $F_2$ (elements that belong to a basis of $F_2$) due to Cohen, Metzler and Zimmerman \cite{Cohen1981WhatDoes} is fundamental to Section \ref{sec:classify}, while Lemma \ref{lem:algo_case1} applies both this description and the classical result of Nielsen that $\out(F_2)\cong \operatorname{GL}_2(\mathbb{Z})$ under the natural map. More fundamentally, this paper is principally concerned with non-surjective endomorphisms $\psi\in\emo(F_2)$, and here $\fix(\psi)$ is infinite cyclic or trivial, so the problem of finding a basis for $\fix(\psi)$ is equivalent to determining if $\fix(\psi)$ is trivial or not. This simplification fails in free groups of higher rank.

\p{Outline of the paper}
In Section \ref{sec:applications} we prove Corollary \ref{corol:stableimage}, which follows from Theorem \ref{thm:basisexists} together with results of Imrich and Turner.

The proof of Theorem \ref{thm:basisexists} relies on several observations and results. It turns out that the main case to consider is when the endomorphism $\psi$ is injective but not surjective, as we explain in Section \ref{sec:prelim}. In this case we have the simplification which drives the rest of the paper: the fixed subgroup, if non-trivial, is generated by a single primitive element (this is Lemma \ref{lem:fixedprimitives}).

In order to find this element we take a long detour and first find fixed points up to conjugacy, that is, the \emph{(maximal) outer fixed points}.
An outer fixed point of $\phi$ is a conjugacy class consisting of elements which are mapped to a conjugate by $\phi$; these conjugacy classes behave similarly to fixed points, but there are important differences. In particular, maximal (maximality means not being a proper power) fixed points of non-surjective monomorphisms are unique, but maximal outer fixed points may not be (see Example \ref{ex:4MOFP}).
The bulk of this paper (Sections \ref{sec:ofp1} -- \ref{sec:classify}) is devoted to proving the following.

\begin{thmletter} \label{thm:maxoutalgo}
There exists an algorithm with input a non-surjective monomorphism $\psi \in \emo(F(a,b))$ and output the maximal outer fixed points of $\psi$.
\end{thmletter}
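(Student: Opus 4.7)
The plan is to reduce the problem in two stages: first narrow the search for MOFPs of $\psi$ to a finite, effectively computable candidate set of conjugacy classes, and then decide which candidates are genuine outer fixed points via an instance of the endomorphism-twisted conjugacy problem (the latter handled in Section \ref{sec:solvingInstances}).

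For the first stage I would exploit the low rank of $F_2$ via three complementary tools: the abelianization, the Cohen--Metzler--Zimmerman classification of primitives of $F_2$, and the Nielsen isomorphism $\out(F_2)\cong GL_2(\mathbb{Z})$. The abelianization of $\psi$ is an integer matrix $M\in M_2(\mathbb{Z})$ which, by injectivity and non-surjectivity of $\psi$, is injective and non-surjective; in particular $|\det M|\geq 2$. If $[w]$ is an outer fixed point then the image $\overline{w}\in\mathbb{Z}^2$ of $w$ under abelianization is fixed by $M$, and the fixed subspace of $M$ in $\mathbb{Z}^2$ has rank $0$, $1$, or $2$. The rank $0$ case yields no non-trivial MOFP. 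In the rank $1$ case the abelianizations of candidate classes lie on a single line $L\subset\mathbb{Z}^2$; using the Cohen--Metzler--Zimmerman description I can parametrise all primitive elements of $F_2$ whose abelianization is a generator of $L$, and then parametrise the (possibly non-primitive but non-power) candidate classes whose abelianization is a multiple of that generator. The rank $2$ case, where $M$ is the identity, would be treated separately using the fact that $\psi$ is non-surjective, a strong restriction on an endomorphism inducing the identity on abelianization.

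For the second stage, once a candidate class $[w]$ is in hand, deciding whether $\psi(w)$ is conjugate to $w$ is an instance of the endomorphism-twisted conjugacy problem with one side fixed. The specific family of instances produced by the first stage is shown decidable in Section \ref{sec:solvingInstances}, which can therefore be invoked as a black box. In parallel, Mutanguha's train track machinery for injective endomorphisms should supply a topological representative of $\psi$ from which outer fixed points can in principle be read off as periodic Nielsen paths, and I expect the actual proof to interleave this topological viewpoint with the algebraic classification above.

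The main obstacle is the first stage: establishing that the candidate list is genuinely finite, or at least finitely parametrised so that only finitely many tests are needed. Example \ref{ex:4MOFP} rules out uniqueness of MOFPs, and Lemma \ref{lem:fixedprimitives} concerns only $\fix(\psi)$ and does not have a direct outer analogue, so the single-primitive-generator shortcut is unavailable for outer fixed points. Controlling MOFPs therefore requires a delicate interplay between the primitive structure of $F_2$ given by Cohen--Metzler--Zimmerman and the combinatorics forced on $\psi$ by its action on $\mathbb{Z}^2\cong F_2^{\mathrm{ab}}$, which is precisely the work of Section \ref{sec:classify}.
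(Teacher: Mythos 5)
Your proposal contains an early error that derails the whole plan: you assert that injectivity and non-surjectivity of $\psi$ force the abelianized matrix $M=\Psi_{(a,b)}$ to be injective and non-surjective on $\mathbb{Z}^2$, so that $|\det M|\geq 2$. Neither property transfers. For instance, the non-surjective monomorphism $\psi_2(a)=a$, $\psi_2(b)=a^{-2}b^{-1}aba^2ba^{-1}$ from Example \ref{ex:4MOFP} induces the identity on $\mathbb{Z}^2$. The dichotomy the paper actually runs on is $\det\Psi_{(a,b)}\neq\pm1$ versus $\det\Psi_{(a,b)}=\pm1$ (Proposition \ref{prop:maxoutalgo}), and your Stage~1 only handles the former. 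Even there you over-complicate: maximal outer fixed elements of non-surjective monomorphisms are automatically primitive (Lemma \ref{lem:maxouter}), so there are no ``non-primitive but non-power'' candidate classes to worry about, and since primitives of $F_2$ are determined up to conjugacy by their abelianization, the rank-$1$ case produces a single candidate up to inversion which is then confirmed or rejected by the ordinary conjugacy problem in $F_2$ (Lemma \ref{lem:algo_case1}). No twisted conjugacy is needed for Theorem \ref{thm:maxoutalgo}; the $\varphi_Z$-twisted machinery of Section \ref{sec:solvingInstances} serves Theorem \ref{thm:basisexists}, where one must pass from outer fixed points to genuine fixed points.

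The real difficulty is the case $\det\Psi_{(a,b)}=\pm1$, which you only gesture at as ``a strong restriction'' to be treated separately. When $M$ is the identity the abelianization constrains nothing: every primitive conjugacy class is a candidate, so your finiteness claim for Stage~1 fails. The paper's essential new ingredient, for which your proposal offers no substitute, is the mapping torus: by Mutanguha's criterion (Theorem \ref{thm:HypMappingTori}) one runs a terminating hyperbolicity tester for $M_\psi$, which either certifies hyperbolicity (so no power of $\psi$ has a non-trivial outer fixed point, Lemma \ref{lem:hypobstruction}) or returns an identity $\psi^p(x)\sim x^q$; Lemma \ref{lem:IdentityMatrixCase} then forces $q=\pm1$, yielding a concrete maximal outer fixed element of $\psi^{2p}$. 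Only once that seed exists does the combinatorial classification of Section \ref{sec:classify} (Lemmas \ref{lem:formofB}--\ref{lem:MOFP}, Theorem \ref{thm:OuterFPClassification}) bound $\MOFix(\psi^{2p})$ by two up to inversion and compute the possible second point from the first, and Lemma \ref{lem:extract} recovers $\MOFix(\psi)$ from $\MOFix(\psi^{2p})$. Without a way to decide \emph{existence} in the $\det=\pm1$ case, an enumeration-based Stage~1 cannot terminate.
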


Theorem \ref{thm:maxoutalgo} is used as follows: if $\fix(\psi) \neq \{1\}$ then we extract the generator(s) of $\fix(\psi)$ from one of the finitely many conjugacy classes output by the algorithm in Theorem \ref{thm:maxoutalgo}. This algorithm relies on testing which one of two conditions the mapping torus of $\psi$ satisfies (see the proof of Proposition \ref{prop:KapMut}), and while this process will terminate, it will not give an efficient algorithm.

In Proposition \ref{prop:maxoutalgo} we split Theorem \ref{thm:maxoutalgo} into two cases, depending on the ``exponent-sum matrix'' $\Psi_{(a,b)}$ of $\psi\in\emo(F(a, b))$. When $\det(\Psi_{(a,b)})\neq \pm 1$ we employ elementary linear algebra and the classical Nielsen theory of automorphisms of $F_2$ to find the outer fixed points (Section \ref{sec:ofp1}), and when $\det(\Psi_{(a,b)})=\pm1$ we have a much more involved argument (Sections \ref{sec:ofp2} and \ref{sec:classify}).

In Section \ref{sec:ofp2} we assume $\det(\Psi_{(a,b)})=\pm1$ and use an algorithm, based on ideas and results of Mutanguha and Kapovich, which determines the hyperbolicity of the mapping torus of $\psi$. This algorithm allows us to determine whether or not some power $\psi^k$ of $\psi$ has non-trivial outer fixed points.
In Section \ref{sec:classify} we classify the outer fixed points and show, in Theorem \ref{thm:OuterFPClassification}, that there are at most two maximal outer fixed points (up to inversion) for a non-surjective monomorphism $\psi$ that satisfies $\det(\Psi_{(a,b)})=\pm1$. We end the section by combining the above algorithms and classification results to prove Theorem \ref{thm:maxoutalgo}, which says that these points are computable.

Finally, in Section \ref{sec:FixedPoints} we use the knowledge of the outer fixed points of $\psi$ to compute a basis for $\fix(\psi)$, and in particular we prove Theorem \ref{thm:basisexists}. This involves resolving a special case of the endomorphism-twisted conjugacy problem for $F_2$.

\section*{Acknowledgements}
The authors were supported by EPSRC Standard Grant EP/R035814/1. The first-named author would like to thank the organisers of the Dagstuhl seminar 19131 \emph{Algorithmic Problems in Group Theory}, where the topics addressed in this paper were discussed and listed as important open questions in the theory of free groups \cite[Abstract 3.24, Problem 4.6]{Dagstuhl2019}.

\section{Computing stable images in $F_2$}
\label{sec:applications}
We start the paper by proving Corollary \ref{corol:stableimage}.
Recall from the introduction that the {stable image} of $\psi\in\operatorname{End}(F)$ is the subgroup $\psi^{\infty}(F):=\cap_{i=1}^{\infty}\psi^i(F)$ of $F$. This subgroup was the key object studied in \cite{Imrich1989Endomorphisms}, where they observed that a monomorphism $\psi$ acts as an automorphism on $\psi^{\infty}(F)$.

In general, finding a basis for $\psi^{\infty}(F)$ is harder than for $\fix(\psi)$, in the sense that given a basis for $\psi^{\infty}(F)$ one can compute a basis for $\fix(\psi)$:
If $\psi$ is injective then $\psi$ acts as an automorphism on $\psi^{\infty}(F)$, so $\psi|_{\psi^{\infty}(F)}\in\aut(\psi^{\infty}(F))$, and moreover $\fix(\psi)=\fix(\psi|_{\psi^{\infty}(F)})$, so apply the algorithm for automorphisms \cite{Bogopolski2016algorithm} to $\psi|_{\psi^{\infty}(F)}$. If $\psi$ is not injective, then this follows from the injective case \cite[Theorem 2]{Imrich1989Endomorphisms}.
For $F_2$, Theorem \ref{thm:basisexists} allows us go the other way, giving Corollary \ref{corol:stableimage}.

\begin{proof}[Proof of Corollary \ref{corol:stableimage}]
Note that we can algorithmically determine if $\psi$ is surjective or not (for example, by using Stallings' foldings). If $\psi$ is surjective then $\psi^{\infty}(F_2)=F_2$ and the basis for $\psi^{\infty}(F_2)$ is the basis of $F_2$.

So suppose $\psi$ is non-surjective. Then $\psi^{\infty}(F_2)$ is a proper retract of $F_2$ \cite[Theorem 1]{Turner1996TestWords}, and hence $\psi^{\infty}(F_2) \cong \{1\}$ or $\psi^{\infty}(F_2) \cong \mathbb{Z}$ (see page 103 in \cite{mks} for more on retracts of $F_2$). Now, $\psi$ acts as an automorphism on $\psi^{\infty}(F_2)$ \cite{Imrich1989Endomorphisms}, and since all automorphisms of $\mathbb{Z}$ or $\{1\}$ are involutions, $\psi^2$ acts trivially on $\psi^{\infty}(F_2)$. Hence,
$\psi^{\infty}(F_2)\leq\fix(\psi^2)$. Note also that $\fix(\psi^2)\leq(\psi^2)^{\infty}(F_2)$ and that $\psi^{\infty}(F_2)=(\psi^2)^{\infty}(F_2)$. These three facts combine to give that $\fix(\psi^2)=\psi^{\infty}(F_2)$. By Theorem \ref{thm:basisexists}, we can compute a basis for $\fix(\psi^2)$, and hence also for $\psi^{\infty}(F_2)$.
\end{proof}

\section{Preliminaries on fixed points}
\label{sec:prelim}

In this section we give some preliminary definitions, remarks and lemmas which underlie this paper. We also introduce the notion of an ``outer fixed point''; the bulk of this paper is devoted to the study of outer fixed points.

\p{Input to the algorithm}
In this paper we give an algorithm with input an endomorphism $\psi\in\emo(F_2)$ and output a basis for $\fix(\psi)=\{x\in F_2\mid \psi(x)=x\}$, where the input satisfies the following points.
 Firstly, we usually work with a fixed basis $a, b$ for $F_2$, and write $F(a, b)$ for the free group with this basis. This basis is implicitly part of the input. Secondly, if we are given an endomorphism $\psi: F(a, b)\rightarrow F(a, b)$ then we are either given both $\psi(a)$ and $\psi(b)$ explicitly in the input, or these words can be pre-computed.

\p{The three cases of Theorem \ref{thm:basisexists}}
Since free groups are Hopfian, every surjective endomorphism is an automorphism. The algorithm of Theorem \ref{thm:basisexists}, which computes the basis for $\fix(\psi)$, therefore splits into three cases:

\begin{enumerate}[label=(\roman*)]%roman* gives the characters, while the brackets around it are mirrored in the list. Can also do roman*. to give i. etc. (with a dot afterwards)
\item\label{list:Aut} $\psi$ is an automorphism. Theorem \ref{thm:basisexists} is known to hold for this case \cite{Bogopolski2016algorithm} (see also \cite{Feighn2018algorithmic}).
\item\label{list:injNotOnto} $\psi$ is injective but not surjective. Here, $\fix(\psi)$ is either trivial or infinite cyclic \cite{Turner1996TestWords}. This case is the core of the present paper.
\item\label{list:NotInjNotOnto} $\psi$ is neither injective nor surjective. Here $\fix(\psi)$ is again either trivial or infinite cyclic. We prove this case in Lemma \ref{lem:nonInjCase}, below.
\end{enumerate} 

We first resolve Case \ref{list:NotInjNotOnto} of this list.

\begin{lemma}
\label{lem:nonInjCase}
There exists an algorithm with input a non-injective endomorphism $\psi: F(a, b)\rightarrow F(a, b)$ and output a basis for $\fix(\psi)$.
\end{lemma}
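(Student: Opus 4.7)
The plan is to reduce the problem to an analysis of an endomorphism of a cyclic group, by exploiting the Hopf property of $F_2$. First I would note that, for any endomorphism, $\fix(\psi)\subseteq\im(\psi)$, since any fixed point $x$ satisfies $x=\psi(x)\in\im(\psi)$. Second, I would observe that a non-injective endomorphism of $F_2$ necessarily has image of rank at most $1$: by Hopficity any surjection $F_2\twoheadrightarrow F_2$ is an isomorphism, so if $\im(\psi)$ had rank $2$ then $\psi$ would be injective. Consequently $\im(\psi)$, and therefore $\fix(\psi)$, is trivial or infinite cyclic.

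The algorithm itself would then proceed in three steps. Step~1: compute the reduced words $\psi(a)$ and $\psi(b)$; if both are trivial, return the empty basis for $\fix(\psi)=\{1\}$. Step~2: find a generator $v\in F_2$ of the cyclic subgroup $\im(\psi)=\langle\psi(a),\psi(b)\rangle$, expressed explicitly as a word in $a,b$. This is standard free-group bookkeeping: verify that $\psi(a)$ and $\psi(b)$ commute (they must, by the rank bound), extract their common primitive root $u\in F_2$, write $\psi(a)=u^{\alpha}$ and $\psi(b)=u^{\beta}$, and set $v=u^{\gcd(\alpha,\beta)}$; a B\'ezout identity $p\alpha+q\beta=\gcd(\alpha,\beta)$ then realises $v$ as the explicit word $\psi(a)^{p}\psi(b)^{q}$ in $a,b$ (the degenerate cases where one of $\psi(a),\psi(b)$ is trivial are handled in the same spirit). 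Step~3: compute $\psi(v)$ by substitution; since $\im(\psi)=\langle v\rangle$ is $\psi$-invariant, there is a unique $s\in\mathbb{Z}$ with $\psi(v)=v^{s}$, which is read off from the reduced form (for instance by comparing $\psi(v)$ to a suitable power of the primitive root $u$).

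To conclude, I would observe that $v^{n}\in\fix(\psi)$ if and only if $v^{sn}=v^{n}$, i.e.\ $n(s-1)=0$. Hence if $s=1$ then $\fix(\psi)=\langle v\rangle$ and we return the basis $\{v\}$; otherwise $\fix(\psi)=\{1\}$ and we return the empty basis. The only place where any real bookkeeping occurs is Step~2, and this is entirely routine for $F_2$; there is no combinatorial or dynamical subtlety to navigate, which is precisely why the non-injective case is dispatched in a short lemma here while Case~\ref{list:injNotOnto}, where $\im(\psi)$ can have rank $2$, occupies the remainder of the paper.
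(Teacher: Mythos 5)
Your proposal is correct and takes essentially the same approach as the paper: reduce to the cyclic image $\im(\psi)$, find a generator $v$ of it, and test whether $v$ is fixed. The paper invokes Stallings' foldings and unique roots where you spell out a gcd/B\'ezout computation and compute the integer $s$ with $\psi(v)=v^s$, but these are cosmetic variations of the same argument.
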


\begin{proof}
As $\psi$ is non-injective we have that $\im(\psi)$ is cyclic (possibly trivial). A generating element $w$ of $\im(\psi)$ can therefore be found by standard algorithms (for example, Stallings' folding algorithm). Suppose that $\fix(\psi)$ is non-trivial. Then there exists some $k\in\mathbb{Z}\setminus\{0\}$ such that $\psi(w^k)=w^k$. Then $\psi(w)^k=w^k$, and as roots are unique in free groups we have that $\psi(w)=w$. Therefore, $\fix(\psi)$ is non-trivial if and only if $\im(\psi)=\langle w\rangle$ is non-trivial and $\psi(w)=w$, if and only if $\im(\psi)$ is non-trivial and $\fix(\psi)=\im(\psi)$.

As the generator $w$ was computed algorithmically, and as we can algorithmically decide if $\psi(w)$ is equal to $w$ or not, the algorithm to compute a basis for $\fix(\psi)$ is as follows: First obtain a generator $w$ for $\im(\psi)$ using standard algorithms. Then compute $\psi(w)$. If $\psi(w)=w$ and $w\neq1$ then $\fix(\psi)=\langle w\rangle$. Else, $\fix(\psi)=\{1\}.$
\end{proof}

\p{Non-surjective monomorphisms}
As Cases \ref{list:Aut} and \ref{list:NotInjNotOnto} are resolved, we focus on Case \ref{list:injNotOnto}, so on non-surjective monomorphisms. We begin with two preliminary results on fixed points of such maps. By a \emph{primitive word (or element)} of a free group $F$ we mean a word $x\in F$ such that $x$ is an element of a free basis of $F$.

\begin{lemma}
\label{lem:fixedprimitives}
Let $\psi: F(a, b)\rightarrow F(a, b)$ be a non-surjective monomorphism. If $\psi(w)=w$ then there exists a primitive element $x$ such that $w=x^i$ and $\psi(x)=x$.
\end{lemma}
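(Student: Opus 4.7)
The plan is to combine three facts that are either classical or already cited in the paper: (1) any fixed element of $\psi$ lies in the stable image $\psi^{\infty}(F_2)$; (2) for non-surjective $\psi$, the stable image is a proper retract of $F_2$ and hence is trivial or infinite cyclic, generated by a primitive element; (3) $\psi$ acts as an automorphism on the stable image, and an automorphism of $\mathbb Z$ is $\pm 1$. These ingredients fit together to force the required primitive fixed generator.

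First I would observe that if $\psi(w)=w$ then $w=\psi^n(w)\in \psi^n(F_2)$ for every $n\ge 0$, so $w \in \bigcap_n \psi^n(F_2) = \psi^{\infty}(F_2)$. We may assume $w\neq 1$, since otherwise there is nothing to say beyond taking $i=0$. Next, as already used in the proof of Corollary \ref{corol:stableimage}, Turner's theorem \cite[Theorem 1]{Turner1996TestWords} ensures that $\psi^{\infty}(F_2)$ is a proper retract of $F_2$, and the standard fact that retracts of $F_2$ are free factors (see \cite[p.\ 103]{mks}) gives that $\psi^{\infty}(F_2)$ is either trivial or generated by a primitive element $x$. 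Since $w \neq 1$ lies in $\psi^{\infty}(F_2)$, the trivial option is excluded and we obtain $\psi^{\infty}(F_2)=\langle x\rangle$ with $x$ primitive in $F(a,b)$, hence $w=x^i$ for some $i\neq 0$.

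It then remains to upgrade $\psi(w)=w$ to $\psi(x)=x$. By the result of Imrich and Turner \cite{Imrich1989Endomorphisms}, $\psi$ acts as an automorphism of $\psi^{\infty}(F_2)\cong \mathbb{Z}$, so $\psi(x)=x^{\varepsilon}$ with $\varepsilon\in\{\pm 1\}$. Applying $\psi$ to $w=x^i$ gives $x^{\varepsilon i}=x^i$, and since $x$ has infinite order and $i\neq 0$ this forces $\varepsilon=1$, so $\psi(x)=x$.

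The only step with genuine content is the appeal to the fact that rank-one retracts of $F_2$ are free factors (so the generator is primitive, not merely root-free); this is the one place where the rank-two hypothesis is used in an essential way, as opposed to merely giving $\fix(\psi)$ cyclic. Everything else is direct from definitions and the previously cited results of Turner and of Imrich--Turner.
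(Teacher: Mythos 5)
Your proof is correct and follows essentially the same route as the paper: both identify $\psi^{\infty}(F_2)$ as a proper free factor via Turner's theorem, hence cyclic with primitive generator $x$, and deduce $w=x^i$. The only variation is the final step, where the paper concludes $\psi(x)=x$ directly from uniqueness of roots applied to $\psi(x)^i=x^i$, while you route through the Imrich--Turner observation that $\psi$ restricts to an automorphism of $\psi^{\infty}(F_2)\cong\mathbb{Z}$ and then rule out $\psi(x)=x^{-1}$; both are valid, the paper's being marginally more economical.
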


\begin{proof}
As $\psi$ is a non-surjective monomorphism, the stable image $\psi^{\infty}(F(a, b))$ is a free factor of $F(a, b)$ \cite[Theorem 1]{Turner1996TestWords}, and hence is cyclic and generated by a primitive element $x$ of $F(a, b)$. As $\fix(\psi)\leq\psi^{\infty}(F(a, b))$, the fixed element $w$ is a power of $x$, so $w=x^i$. As $\psi(x^i)=x^i$, and as roots are unique in free groups, we have that $\psi(x)=x$.
\end{proof}

A word ${w}\in F(a, b)$ is a \emph{proper power} if there exists some ${w}_0\in F(a, b)$ and some integer $p>1$ such that ${w}={w}_0^p$.
Define a \emph{maximal fixed point} of an endomorphism $\psi: F\rightarrow F$ to be a fixed point $x\in\fix(\psi)$ such that $x$ is not a proper power.
Note that for the word ``maximal'' to make sense every fixed point should be a power of a maximal fixed point.
This is indeed the case, as if $w$ is a fixed point of $\psi$ then it is contained in a maximal cyclic subgroup $\langle x\rangle$ of $F$, and as in the proof of Lemma \ref{lem:fixedprimitives} we see that $x$ is a fixed point also.
We then have the following:

\begin{lemma}
\label{lem:fpUnique}
For $\psi: F(a, b)\rightarrow F(a, b)$ a non-surjective monomorphism we have $\fix(\psi)=\{1\}$ or $\fix(\psi)=\langle x \rangle$, where $x$ is a primitive element. Moreover, $x$ and $x^{-1}$ are the only maximal fixed points of $\psi$.
\end{lemma}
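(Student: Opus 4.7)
The plan is to deduce this lemma almost immediately from Lemma \ref{lem:fixedprimitives} together with the structure of the stable image of a non-surjective monomorphism. I would handle the two assertions (the form of $\fix(\psi)$ and the uniqueness of maximal fixed points) in turn, using that the stable image $\psi^{\infty}(F(a,b))$ is a free factor of $F(a,b)$ by \cite[Theorem 1]{Turner1996TestWords}, and since $\psi$ is non-surjective this free factor has rank at most $1$, hence is $\{1\}$ or is infinite cyclic generated by a primitive element $x$ of $F(a,b)$.

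If $\fix(\psi)=\{1\}$ there is nothing to prove, so assume $\fix(\psi)\neq\{1\}$. Take any non-trivial $w\in\fix(\psi)$. Since $\fix(\psi)\leq\psi^{\infty}(F(a,b))$, the stable image cannot be trivial, so $\psi^{\infty}(F(a,b))=\langle x\rangle$ for a primitive element $x$. By Lemma \ref{lem:fixedprimitives}, $\psi(x)=x$, so $\langle x\rangle\leq\fix(\psi)$. Combined with the inclusion $\fix(\psi)\leq\langle x\rangle$, this gives $\fix(\psi)=\langle x\rangle$ as required.

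For the moreover part, observe that $x$ is primitive and hence not a proper power, so $x$ (and its inverse $x^{-1}$) is a maximal fixed point of $\psi$. Conversely, suppose $y$ is a maximal fixed point of $\psi$. Then $y\in\fix(\psi)=\langle x\rangle$, so $y=x^j$ for some $j\in\mathbb{Z}$. Since $y$ is not a proper power and $x$ is primitive, we must have $j=\pm1$, hence $y\in\{x,x^{-1}\}$.

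There is no serious obstacle here: the content lies in Lemma \ref{lem:fixedprimitives} and in Turner's theorem on the stable image. The present lemma is just the packaging of these facts together with the observation that cyclic subgroups of $\mathbb{Z}$ containing a generator equal the whole group, and that the only elements of an infinite cyclic group which are not proper powers are the two generators.
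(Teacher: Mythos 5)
Your proof is correct, but it takes a genuinely different route from the paper's. The paper proves uniqueness of maximal fixed points \emph{first}, via a commutator trick: if $x$ and $y$ are non-commuting maximal fixed points then $[x,y]$ is a non-trivial fixed point, hence by Lemma \ref{lem:fixedprimitives} a power of a primitive element, contradicting that commutators die under abelianisation; this forces $\langle x,y\rangle$ cyclic and $y=x^{-1}$, from which $\fix(\psi)=\langle x\rangle$ is deduced. You instead go straight to the stable image: $\fix(\psi)\leq\psi^{\infty}(F_2)$, and for non-surjective $\psi$ Turner's theorem makes $\psi^{\infty}(F_2)$ a proper free factor, hence trivial or $\langle x\rangle$ with $x$ primitive; in the non-trivial case $\psi(x)=x$ and both containments give equality. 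Your route is more direct and avoids the commutator argument entirely, at the cost of reopening the same facts about the stable image that already drove the proof of Lemma \ref{lem:fixedprimitives} (which the paper here treats as a black box).

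One small point of rigour: you define $x$ as a generator of $\psi^{\infty}(F_2)$ and then assert ``by Lemma \ref{lem:fixedprimitives}, $\psi(x)=x$.'' Strictly, Lemma \ref{lem:fixedprimitives} applied to your chosen $w\in\fix(\psi)$ produces \emph{some} primitive $x'$ with $w=(x')^i$ and $\psi(x')=x'$; you then need to identify $x'$ with $x^{\pm1}$. This is immediate --- both $x$ and $x'$ are roots of $w$ that are not proper powers, so $\langle x\rangle=\langle x'\rangle$ by uniqueness of maximal cyclic subgroups in free groups --- but as written the step conflates the two primitives, so a sentence tying them together would make the argument airtight. Alternatively, one can avoid the issue by letting $x$ \emph{be} the primitive supplied by Lemma \ref{lem:fixedprimitives} and then arguing $\langle x\rangle\leq\fix(\psi)\leq\psi^{\infty}(F_2)$ are all rank-one, hence equal.
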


\begin{proof}
If $\fix(\psi)$ contains a non-trivial fixed point then it contains a maximal one. Let $x$ and $y$ be two non-equal maximal fixed points. Suppose $x$ and $y$ do not commute. Then $\psi([x, y])=[x, y]\neq1$. Hence, by Lemma \ref{lem:fixedprimitives}, $[x, y]$ is the power of a primitive element, a contradiction (to see this contradiction note that, for example, no primitive element is killed by the abelianisation map). Therefore, $\langle x, y\rangle$ is cyclic. By maximality and as $x\neq y$, we have $x=y^{-1}$. Hence, $x$ and $x^{-1}$ are the only maximal fixed points of $\psi$. Therefore, every fixed point has the form $x^i$ for some $i\in\mathbb{Z}$, and the result follows.
\end{proof}

\p{Outer fixed points}
 Our strategy to compute $\fix(\psi)$ for $\psi$ a non-surjective monomorphism is by computing the \emph{outer fixed points} of $\psi$, defined below.
\begin{definition}%\leavevmode
Let $[{w}]$ denote the conjugacy class of ${w}\in F(a,b)$ (to avoid confusion with citations, we often use $[1]_F$ to denote the conjugacy class of the identity element $1\in F(a, b)$). We write ${g}\sim_{G}{h}$ to mean that ${g}$ and ${h}$ are conjugate in the group $G$ and $g\nsim_Gh$ to mean that they are not conjugate, and we write ${g}\sim{h}$ and ${g}\nsim{h}$ when the group $G$ is understood.
\begin{enumerate}[label=(\roman*)]
\item An \emph{outer fixed element} of an endomorphism $\psi\in\emo(F(a, b))$ is an element ${w}\in F(a, b)$ such that $\psi({w})\sim{w}$. A \emph{maximal outer fixed element} is an outer fixed element which is not a proper power.
\item An \emph{outer fixed point} of an endomorphism $\psi\in\emo(F(a, b))$ is the conjugacy class $[{w}]$ of an outer fixed element ${w}$ of $\psi$. Note that $[{w}]$ satisfies $\psi([{w}]) \subseteq [{w}]$. A \emph{maximal outer fixed point} is the conjugacy class $[{w}]$ of a maximal outer fixed element ${w}$.
\item The \emph{trivial outer fixed point}, denoted $[1]_F$, is the conjugacy class of the identity element $1\in F(a, b)$. Note that $[1]_F$ is always an outer fixed point, but never a maximal one.
\end{enumerate}
We shall use $\MOFix(\psi)$ to to denote the set of maximal outer fixed points of the endomorphism $\psi$. If $\alpha$ is a conjugacy class then we write $\alpha^{-1}$ to mean the conjugacy class $\{x^{-1}\mid x\in\alpha\}$.
\end{definition}

In order to describe all outer fixed points of a non-surjective monomorphism it is sufficient to find all the maximal outer fixed points.

If $\alpha$ is an outer fixed point of $\psi$ then for any ${u}\in\alpha$ there exists some $\gamma\in\inn(F)$ such that ${u}\in\fix(\psi\gamma)$, and if in addition $\alpha$ is a maximal outer fixed point of $\psi$ then ${u}$ is a maximal fixed point of $\psi\gamma$.
Hence, by Lemma \ref{lem:fixedprimitives}, we get:
\begin{lemma}\label{lem:maxouter}
Maximal outer fixed elements of non-surjective monomorphisms of $F(a,b)$ are primitive elements.
Maximal outer fixed points of non-surjective monomorphisms of $F(a,b)$ are conjugacy classes of primitive elements.
\end{lemma}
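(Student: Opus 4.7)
The setup paragraph immediately preceding the lemma already carries most of the argument; the plan is to feed that setup into Lemma \ref{lem:fixedprimitives} and then extract primitivity from the maximality hypothesis.

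First I would fix a maximal outer fixed element $u$ of a non-surjective monomorphism $\psi$, and use $\psi(u)\sim u$ to write $\psi(u)=gug^{-1}$ for some $g\in F(a,b)$. Letting $\gamma$ denote the inner automorphism $x\mapsto g^{-1}xg$, a one-line check gives $(\gamma\circ\psi)(u)=u$, so $u\in\fix(\gamma\circ\psi)$; the paper's $\psi\gamma$ version follows by the analogous choice on the other side. Next I would verify that $\gamma\circ\psi$ is again a non-surjective monomorphism: injectivity is inherited from $\psi$ since $\gamma\in\aut(F(a,b))$, and its image $\gamma(\psi(F(a,b)))$ is a conjugate of the proper subgroup $\psi(F(a,b))$, hence still proper.

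Lemma \ref{lem:fixedprimitives} then applies to $\gamma\circ\psi$ and its fixed element $u$, yielding a primitive element $x$ and an integer $i$ with $u=x^i$. The maximality hypothesis on $u$ (not a proper power) forces $|i|=1$, so $u=x^{\pm 1}$ is itself primitive, which proves the first sentence of the lemma. For the second sentence, I would note that primitivity is preserved under conjugation in $F(a,b)$ (inner automorphisms send bases to bases), so the entire conjugacy class $[u]$ consists of primitive elements.

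There is essentially no obstacle: all the conceptual work lives in Lemma \ref{lem:fixedprimitives} and in the paragraph preceding the lemma statement. The only care needed is in tracking the composition convention $\psi\gamma$ versus $\gamma\psi$, which is absorbed by the choice of $g$ or $g^{-1}$ when defining the witnessing inner automorphism. The lemma itself is then just the observation that ``maximal outer fixed element'' upgrades the conclusion of Lemma \ref{lem:fixedprimitives} from ``power of a primitive element'' to ``primitive element''.
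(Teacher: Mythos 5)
Your proof is correct and follows exactly the route the paper intends: conjugate by an inner automorphism to reduce an outer fixed element to a fixed element of the (still non-surjective, still injective) map $\psi\gamma$, apply Lemma \ref{lem:fixedprimitives}, and use non-properness of the power to conclude primitivity. The paper treats this as immediate from the paragraph preceding the lemma, which is the same chain of observations you spell out.
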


\p{The associated matrix of an endomorphism}
For a word ${w}\in F(a, b)$ let $\sigma_a({w})$ be the exponent-sum of the $a$-terms in ${w}$, and define $\sigma_b({w})$ analogously.
The \emph{associated matrix $\Psi_{(a,b)}\in M_{2\times2}(\mathbb{Z})$} of $\psi: F(a, b)\rightarrow F(a, b)$, $a\mapsto A, b\mapsto B$ is then

\begin{equation}\label{matrix}
\Psi_{(a,b)}:=\left(\begin{array}{cc} \sigma_a(A)&\sigma_b(A)\\\sigma_a(B)&\sigma_b(B)\end{array}\right),
\end{equation}
and this matrix defines the action of $\psi$ on the abelianisation of $F(a, b)$.

Our approach to proving Theorem \ref{thm:maxoutalgo}, that is, to finding outer fixed points of a monomorphism $\psi$, is based on the properties of its associated matrix. This approach splits into two cases, which we state in the following proposition, used essentially as a referencing tool.
\begin{proposition} \label{prop:maxoutalgo}
The algorithm in Theorem \ref{thm:maxoutalgo} to compute the maximal outer fixed points of a non-surjective monomorphism $\psi:F(a, b)\rightarrow F(a, b)$ splits into two cases:
\begin{enumerate}[label=(\Roman*)]
\item\label{maxoutalgo:I} $\det(\Psi_{(a,b)})\neq\pm1$, and
\item\label{maxoutalgo:II} $\det(\Psi_{(a,b)})=\pm1$. 
\end{enumerate} 
\end{proposition}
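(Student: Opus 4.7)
The plan is essentially trivial, since the proposition asserts only that the algorithm splits according to the value of $\det(\Psi_{(a,b)})$, and this value is an integer that is either $\pm 1$ or not. The proof therefore has two very short ingredients, one algorithmic and one purely logical.

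First I would observe that $\Psi_{(a,b)}$ is algorithmically computable from the input. Indeed, by the convention recorded at the start of Section \ref{sec:prelim}, we are given (or can pre-compute) the reduced words $\psi(a)=A$ and $\psi(b)=B$. The four entries of $\Psi_{(a,b)}$ are then the exponent sums $\sigma_a(A),\sigma_b(A),\sigma_a(B),\sigma_b(B)$, each of which is obtained by scanning the relevant word letter by letter. In particular $\det(\Psi_{(a,b)})\in\mathbb{Z}$ is computable.

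Second, since $\det(\Psi_{(a,b)})$ is an integer, it either equals $\pm 1$ or it does not, so the conditions in \ref{maxoutalgo:I} and \ref{maxoutalgo:II} cover all possible inputs and are mutually exclusive. Hence testing whether $\det(\Psi_{(a,b)})=\pm 1$ is a decidable preprocessing step which routes each non-surjective monomorphism into exactly one of the two cases; the algorithms themselves are then supplied by the subsequent sections (case \ref{maxoutalgo:I} in Section \ref{sec:ofp1} via linear algebra and the identification $\out(F_2)\cong\operatorname{GL}_2(\mathbb{Z})$, and case \ref{maxoutalgo:II} in Sections \ref{sec:ofp2} and \ref{sec:classify} via the mapping-torus hyperbolicity analysis of Mutanguha and Kapovich, combined with the classification of maximal outer fixed points).

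There is no genuine obstacle to the proposition as stated; it functions as a bookkeeping device that justifies the organisation of the remainder of the paper. The actual obstacles arise after the split is made, and are concentrated in case \ref{maxoutalgo:II}, where the absence of the determinantal obstruction forces the much more delicate arguments of Sections \ref{sec:ofp2} and \ref{sec:classify}.
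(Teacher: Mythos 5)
Your proposal is correct and matches the paper's treatment: the paper explicitly labels this proposition as a referencing tool, offers no separate proof, and defers the substance to Lemma \ref{lem:algo_case1} (Case \ref{maxoutalgo:I}) and Theorem \ref{thm:OuterFPClassification} together with Lemma \ref{lem:algo} (Case \ref{maxoutalgo:II}), exactly as you describe. Your observations that $\det(\Psi_{(a,b)})$ is a computable integer, and that the two cases are exhaustive and mutually exclusive, are the only content the statement actually requires.
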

The proof of Theorem \ref{thm:maxoutalgo} takes all of Sections \ref{sec:ofp1} -- \ref{sec:classify}. Case \ref{maxoutalgo:II} follows from Theorem \ref{thm:OuterFPClassification} and is significantly more difficult than Case \ref{maxoutalgo:I}, which follows from Lemma \ref{lem:algo_case1}. We combine the cases and prove Theorem \ref{thm:maxoutalgo} at the end of Section \ref{sec:classify}.

\section{Proof for Case \ref{maxoutalgo:I} of Proposition \ref{prop:maxoutalgo}}
\label{sec:ofp1}
Here we consider Case \ref{maxoutalgo:I} of Proposition \ref{prop:maxoutalgo}, so non-surjective monomorphisms $\psi: F(a, b)\rightarrow F(a, b)$ with $\det(\Psi_{(a,b)})\neq\pm1$.
In this case, we prove that maximal outer fixed points are unique up to inversion.

We start with a preliminary lemma on matrices.

\begin{lemma}
\label{lem:Matrices}
Let $\Psi, M \in M_{2\times2}(\mathbb{Z})$ be such that $\det(\Psi)\neq\pm1$ and $M\Psi=M$.

If $M=\left(\begin{array}{cc}p_1 & q_1\\ p_2&q_2 \end{array}\right)$ satisfies $\gcd(p_1, q_1)=1=\gcd(p_2, q_2)$, then either $(p_1, q_1)=(p_2, q_2)$ or $(p_1, q_1)=-(p_2, q_2)$.
\end{lemma}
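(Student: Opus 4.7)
The plan is to rewrite $M\Psi = M$ in the form $M(\Psi - I) = 0$, so that each row of $M$, viewed as a row vector in $\mathbb{Q}^2$, lies in the left null space of $\Psi - I$, i.e.\ is a left eigenvector of $\Psi$ with eigenvalue $1$. Both rows $(p_1, q_1)$ and $(p_2, q_2)$ are nonzero by the coprimality hypotheses, so this null space is at least one-dimensional over $\mathbb{Q}$; in particular, $1$ is an eigenvalue of $\Psi$.

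The next step is to pin down the dimension of this null space using the determinant hypothesis. Since the product of the two complex eigenvalues of $\Psi$ equals $\det(\Psi)$, the second eigenvalue of $\Psi$ must be $\det(\Psi)$ itself, which by hypothesis is not $\pm 1$ and in particular is not $1$. Hence $1$ is a simple eigenvalue of $\Psi$, and its left eigenspace is exactly one-dimensional over $\mathbb{Q}$.

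From here the two rows $(p_1, q_1)$ and $(p_2, q_2)$ must be $\mathbb{Q}$-proportional, so I would write $(p_2, q_2) = (r/s)(p_1, q_1)$ with $\gcd(r, s) = 1$ and $s > 0$. Cross-multiplying gives $sp_2 = rp_1$ and $sq_2 = rq_1$; since $\gcd(r, s) = 1$, this forces $s \mid p_1$ and $s \mid q_1$, so $s \mid \gcd(p_1, q_1) = 1$, whence $s = 1$. Then $1 = \gcd(p_2, q_2) = |r|\gcd(p_1, q_1) = |r|$, so $r = \pm 1$, giving $(p_1, q_1) = \pm(p_2, q_2)$.

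The only real (and quite mild) obstacle is the eigenspace dimension count in the second step: one must rule out the possibility that the left null space of $\Psi - I$ is all of $\mathbb{Q}^2$, which would happen iff $\Psi = I$, a case excluded precisely by $\det(\Psi) \neq 1$. Everything else is elementary linear algebra over $\mathbb{Q}$ together with a short divisibility argument exploiting the primitivity of each row.
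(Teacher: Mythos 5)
Your proof is correct, and it achieves the same two milestones as the paper's argument --- the rows of $M$ are proportional, and proportional primitive integer vectors agree up to sign --- but reaches each by a different mechanism. To show the rows are proportional, the paper observes directly that $\det(M)\neq 0$ would force $\Psi = I$ (via $\Psi = M^{-1}M\Psi = M^{-1}M$), so $\det(M)=0$ and hence $q_1p_2 = p_1q_2$; you instead invoke an eigenvalue count to show the left null space of $\Psi - I$ is exactly one-dimensional. Both are fine, though the paper's is more elementary. Where your write-up genuinely improves on the paper is the final step: the paper splits into cases according to which of $p_1, q_1, p_2, q_2$ vanish, whereas your argument --- write $(p_2,q_2) = (r/s)(p_1,q_1)$ in lowest terms, deduce $s=1$ from $\gcd(p_1,q_1)=1$, then $|r|=1$ from $\gcd(p_2,q_2)=1$ --- handles all cases uniformly and is cleaner. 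One tiny nit: at the very end you say the case $\Psi = I$ is "excluded precisely by $\det(\Psi)\neq 1$," but the hypothesis is $\det(\Psi)\neq\pm 1$; this is a strictly stronger exclusion, so the logic is unaffected, but the phrasing slightly misattributes which hypothesis is doing the work.
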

\begin{proof}
If $\det(M)\neq0$ then $\Psi$ is the identity matrix, contradicting $\det(\Psi)\neq\pm1$. Hence, $\det(M)= 0,$ so $q_1p_2=p_1q_2$.

Suppose that all of $p_1, q_1, p_2, q_2$ are non-zero. As $\gcd(p_1, q_1)=1=\gcd(p_2, q_2)$ and $q_1p_2=p_1q_2$, either $(p_1, q_1)=(p_2, q_2)$ or $(p_1, q_1)=-(p_2, q_2)$, as required.

Suppose that one of $p_1, q_1, p_2, q_2$ is $0$, and without loss of generality we may assume that either $p_1=0$ or $q_1=0$.
If $p_1=0$ then as $\gcd(p_1, q_1)=1$ we have $q_1=\epsilon_1$ for $\epsilon_1=\pm1$. As $q_1p_2=p_1q_2$ we further have that $\epsilon_1p_2=0$, and so $p_2=0$. This in turn implies that $q_2=\epsilon_2$ for some $\epsilon_2=\pm1$. Hence, $p_1=0=p_2$ and $|q_1|=1=|q_2|$, and so either $(p_1, q_1)=(p_2, q_2)$ or $(p_1, q_1)=-(p_2, q_2)$ as required. Using identical reasoning, if $q_1=0$ then $|p_1|=1=|p_2|$ and $q_1=0=q_2$, and the result follows.
\end{proof}

We now apply Lemma \ref{lem:Matrices} to outer fixed points. Note that as $\det(\Psi_{(a,b)})\neq\pm1$, the endomorphisms $\psi$ covered by Lemma \ref{lem:useAb} are non-surjective.

\begin{lemma}
\label{lem:useAb}
Let $\psi:F(a, b)\rightarrow F(a, b)$ be a monomorphism with $\det(\Psi_{(a,b)})\neq\pm1$.

If $\alpha$ and $\beta$ are maximal outer fixed points of $\psi$ then either $\alpha=\beta$ or $\alpha=\beta^{-1}$.
\end{lemma}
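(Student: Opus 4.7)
The strategy is to pass to the abelianisation, apply the matrix lemma just proved, and then invoke the classification of primitive conjugacy classes in $F_2$. Pick representatives $w_1 \in \alpha$ and $w_2 \in \beta$; by Lemma \ref{lem:maxouter}, both $w_1$ and $w_2$ are primitive elements of $F(a,b)$, so their abelianisations $(p_i, q_i) := (\sigma_a(w_i), \sigma_b(w_i))$ are coprime pairs in $\mathbb{Z}^2$.

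A direct expansion of $\psi(a^{e_1} b^{f_1}\cdots) = A^{e_1} B^{f_1}\cdots$, where $A:=\psi(a)$ and $B:=\psi(b)$, gives the identity
\[
(\sigma_a(\psi(w)),\,\sigma_b(\psi(w))) \;=\; (\sigma_a(w),\,\sigma_b(w))\,\Psi_{(a,b)}
\]
for every $w \in F(a,b)$. Since $\psi(w_i) \sim w_i$ and conjugation preserves exponent sums, this yields $(p_i, q_i)\,\Psi_{(a,b)} = (p_i, q_i)$ for $i = 1, 2$. Stacking these rows into the $2\times 2$ integer matrix $M$ with rows $(p_1, q_1)$ and $(p_2, q_2)$ gives $M\,\Psi_{(a,b)} = M$, and since both rows have coprime entries, Lemma \ref{lem:Matrices} applies and forces $(p_1, q_1) = \pm(p_2, q_2)$.

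It remains to lift this equality of abelianisations to an equality of conjugacy classes. Here I would invoke the classical fact that two primitive elements of $F(a,b)$ with the same abelianisation are conjugate (equivalently, the abelianisation map induces a bijection from primitive conjugacy classes in $F_2$ to primitive vectors of $\mathbb{Z}^2$), which is a consequence of the Cohen--Metzler--Zimmerman description of primitives in $F_2$ together with Nielsen's theorem $\out(F_2) \cong \operatorname{GL}_2(\mathbb{Z})$, both flagged as fundamental tools in the introduction. Since $[w^{-1}]$ has abelianisation $-(p,q)$ whenever $[w]$ has abelianisation $(p,q)$, the $+$ case of Lemma \ref{lem:Matrices} forces $\alpha = \beta$ and the $-$ case forces $\alpha = \beta^{-1}$. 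The main obstacle is really only this invocation of the primitive-classification result; the rest is a short abelian computation packaged by Lemma \ref{lem:Matrices}.
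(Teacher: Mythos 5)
Your proof is correct and follows essentially the same route as the paper's: pass to the abelianisation, observe that maximal outer fixed elements are primitive (Lemma \ref{lem:maxouter}) so their images are coprime vectors fixed by $\Psi_{(a,b)}$, apply Lemma \ref{lem:Matrices}, and finish by the classical fact that conjugacy classes of primitives in $F_2$ are determined by their abelianisations (the paper cites \cite[Corollary N4]{mks} for this; you attribute it to Cohen--Metzler--Zimmerman plus Nielsen, which amounts to the same thing).
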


\begin{proof}
Let $\pi:F(a,b) \mapsto \mathbb{Z}^2$ be the abelianisation map.
Let $x\in\alpha$ and $y\in\beta$, where $\alpha, \beta$ are as in the statement of the lemma. Write $(p_1, q_1):=\pi(x)$ and $(p_2, q_2):=\pi(y)$.
Since $x$ and $y$ are contained in maximal outer fixed points they satisfy $(p_1, q_1)\Psi_{(a,b)}=(p_1, q_1)$ and $(p_2, q_2)\Psi_{(a,b)}=(p_2, q_2)$; moreover, by Lemma \ref{lem:maxouter} they are primitive elements and so $\gcd(p_1, q_1)=1=\gcd(p_2, q_2)$.
Lemma \ref{lem:Matrices} is therefore applicable, and so either $\pi(x)=\pi(y)$ or $\pi(x)=-\pi(y)$.
The result follows as conjugacy classes of primitive elements of $F(a, b)$ are uniquely defined by their images in the abelianisation \cite[Corollary N4]{mks}.
\end{proof}

Lemmas \ref{lem:maxouter} and \ref{lem:useAb} give the following description of (maximal) outer fixed points of the endomorphisms from Case \ref{maxoutalgo:I} of Proposition \ref{prop:maxoutalgo}.
\begin{corollary} \label{corol:outercase1}
If $\psi: F(a, b)\rightarrow F(a, b)$ is a monomorphism with $\det(\Psi_{(a,b)})\neq \pm 1$ then either $\psi$ has $[1]_F$ as its unique outer fixed point, or $\psi$ has two maximal outer fixed points $[x]$ and $[x^{-1}]$, where $x$ is a primitive element. In the latter case every outer fixed point of $\psi$ has the form $[x^i]$ for some integer $i\in\mathbb{Z}$.
\end{corollary}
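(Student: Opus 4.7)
The plan is to extract Corollary \ref{corol:outercase1} directly from Lemmas \ref{lem:maxouter} and \ref{lem:useAb}, with one small added observation: every non-trivial outer fixed point is the conjugacy class of a power of a maximal outer fixed element, so the structural statement about $\MOFix(\psi)$ already controls all outer fixed points.

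First I would make the reduction to maximal outer fixed points. Given any outer fixed element $w\neq 1$, write $w=w_0^p$ with $w_0$ not a proper power and $p\geq 1$. From $\psi(w_0)^p=\psi(w)\sim w=w_0^p$ and uniqueness of $p$-th roots up to conjugacy in free groups, one deduces $\psi(w_0)\sim w_0$, so $w_0$ is a maximal outer fixed element. Hence every outer fixed point of $\psi$ is either $[1]_F$ or of the form $[y^i]$ for some $[y]\in\MOFix(\psi)$ and some $i\in\mathbb{Z}$.

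Next I would split into the two cases. If $\MOFix(\psi)=\emptyset$, the reduction forces $[1]_F$ to be the only outer fixed point. Otherwise, pick some $[x]\in\MOFix(\psi)$. By Lemma \ref{lem:maxouter}, $x$ is primitive, and by Lemma \ref{lem:useAb} any other element of $\MOFix(\psi)$ equals $[x]$ or $[x^{-1}]$. The class $[x^{-1}]$ is itself an outer fixed point, since $\psi(x^{-1})=\psi(x)^{-1}\sim x^{-1}$, and it is maximal since $x^{-1}$ is primitive whenever $x$ is. To conclude $\MOFix(\psi)=\{[x],[x^{-1}]\}$ with these two classes distinct, I would invoke the fact that conjugacy classes of primitive elements in $F(a,b)$ are detected by the abelianisation map \cite[Corollary N4]{mks}, together with the observation that a primitive element has non-zero image under abelianisation, so $\pi(x)\neq -\pi(x)$ and hence $[x]\neq[x^{-1}]$. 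Combining this with the reduction then gives that every outer fixed point has the form $[x^i]$ for some $i\in\mathbb{Z}$.

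The only point requiring genuine care is confirming that $[x^{-1}]$ is actually a second maximal outer fixed point distinct from $[x]$, and, as noted, this reduces to the abelianisation argument above. Every other step is a mechanical application of the two lemmas.
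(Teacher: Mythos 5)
Your proposal is correct and takes essentially the same approach as the paper: the paper states Corollary \ref{corol:outercase1} directly as a consequence of Lemmas \ref{lem:maxouter} and \ref{lem:useAb} without a written proof, and your argument fills in exactly the expected details (reduction to maximal outer fixed elements via uniqueness of roots, application of the two lemmas, and the abelianisation check that $[x]\neq[x^{-1}]$).
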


The following lemma essentially resolves Case \ref{maxoutalgo:I} of Proposition \ref{prop:maxoutalgo}.

\begin{lemma}\label{lem:algo_case1}
There exists an algorithm with input a monomorphism $\psi: F(a, b)\rightarrow F(a, b)$ satisfying $\det(\Psi_{(a,b)})\neq\pm1$ and with output either the two maximal outer fixed points $[x]$ and $[x^{-1}]$ of $\psi$, or the trivial conjugacy class $[1]_F$ if $\psi$ has no non-trivial outer fixed points.
\end{lemma}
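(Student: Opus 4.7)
The plan is to reduce the problem to linear algebra over $\mathbb{Z}$ combined with a single conjugacy check in $F_2$. Let $\pi:F(a,b)\to\mathbb{Z}^2$ denote the abelianisation map, and compute $\Psi_{(a,b)}$ from $\psi(a),\psi(b)$. By Lemma \ref{lem:maxouter} any maximal outer fixed point is represented by a primitive word, and by \cite[Corollary N4]{mks} such a conjugacy class is uniquely determined by its abelianisation; moreover any outer fixed element $w$ satisfies $\pi(w)\Psi_{(a,b)}=\pi(w)$. So every candidate abelianisation for a maximal outer fixed point is a primitive integer solution of the system $(p,q)(\Psi_{(a,b)}-I)=0$.

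First I would carry out the linear algebra. If $\det(\Psi_{(a,b)}-I)\neq 0$ the only integer solution is $(0,0)$, so no primitive candidate exists and the algorithm returns $[1]_F$. Otherwise, because $\det(\Psi_{(a,b)})\neq\pm 1$ forces $\Psi_{(a,b)}\neq I$, the matrix $\Psi_{(a,b)}-I$ has rank exactly one, and its left null space is a rational line meeting the primitive integer vectors in exactly one pair $\pm(p,q)$, which one reads off directly from the entries of $\Psi_{(a,b)}$.

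Next I would construct an explicit primitive $x\in F(a,b)$ with $\pi(x)=(p,q)$: by B\'ezout, choose $r,s\in\mathbb{Z}$ with $ps-qr=\pm 1$; the integer matrix with rows $(p,q)$ and $(r,s)$ then lies in $\operatorname{GL}_2(\mathbb{Z})$. Since $\out(F(a,b))\cong\operatorname{GL}_2(\mathbb{Z})$ under the natural map, this matrix lifts to an automorphism $\varphi$ of $F(a,b)$, and $x:=\varphi(a)$ is primitive with $\pi(x)=(p,q)$. Such a $\varphi$ can be written down explicitly as a product of Nielsen transformations read off from the Euclidean algorithm applied to $(p,q)$.

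The final step is a single conjugacy check in $F(a,b)$, which is decidable by cyclic reduction: test whether $\psi(x)\sim x$. If yes, then $[x]$ is an outer fixed point and Corollary \ref{corol:outercase1} forces the maximal outer fixed points of $\psi$ to be $[x]$ and $[x^{-1}]$, which are the output. If no, then since $\pm(p,q)$ are the only possible abelianisations of a maximal outer fixed point, Corollary \ref{corol:outercase1} again forces $\psi$ to have no non-trivial outer fixed point, and the algorithm returns $[1]_F$. The main conceptual point is the necessity of this last check: having $\pi(x)=(p,q)$ only guarantees $\pi(\psi(x))=\pi(x)$, and $\psi(x)$ need not be primitive, so one cannot invoke \cite[Corollary N4]{mks} to deduce $\psi(x)\sim x$ automatically. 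However the candidate conjugacy class $[x]$ is determined uniquely up to inversion by $(p,q)$, so one instance of the conjugacy problem in $F_2$ settles the matter.
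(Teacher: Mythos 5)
Your proposal is correct and follows essentially the same route as the paper: reduce to solving $(p,q)(\Psi_{(a,b)}-I)=0$ over $\mathbb{Z}$, normalise to a unique primitive pair $\pm(p,q)$, build a primitive word $x$ with that abelianisation, and settle everything with a single conjugacy check $\psi(x)\sim x$, justified by Corollary \ref{corol:outercase1} and \cite[Corollary N4]{mks}. The only cosmetic difference is in constructing $x$: you lift the $\operatorname{GL}_2(\mathbb{Z})$ matrix built from B\'ezout via Nielsen transformations (using $\out(F_2)\cong\operatorname{GL}_2(\mathbb{Z})$), whereas the paper's proof invokes the Cohen--Metzler--Zimmerman description of primitives (Lemma \ref{lem:primitives}); both are explicit and interchangeable, and your closing remark that the conjugacy check cannot be skipped is exactly the right subtlety to flag.
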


\begin{proof}
Let $x$ be a maximal outer fixed element. Abelianisation map considerations lead to the restrictions $(\sigma_a(x), \sigma_b(x))\Psi_{(a,b)}=(\sigma_a(x), \sigma_b(x))$, and $\gcd(\sigma_a(x), \sigma_b(x))=1$ as $x$ is primitive by Lemma \ref{lem:maxouter}. By replacing $x$ with $x^{-1}$ if necessary, we may additionally assume that $\sigma_a(x)\geq0$. The algorithm is as follows.

First compute the matrix $\Psi_{(a,b)}$ from the map $\psi$. Then find integers $p, q$ satisfying $(p, q)\Psi_{(a,b)}=(p, q)$, $\gcd(p, q)=1$ and $p\geq0$ (the above restrictions), or prove that no such pair exists: to find $(p, q)$, write the equation $(P, Q)\Psi_{(a,b)}=(P, Q)$ with variables $P, Q$ as the system of equations
\begin{equation}\label{equations}%nested in equation environment to get the labelling
\begin{cases}
P(\sigma_a(A)-1)+Q\sigma_a(B)=0\\
P\sigma_b(A)+Q(\sigma_b(B)-1)=0
\end{cases}
\end{equation}
where $A:=\psi(a)$ and $B:=\psi(b)$. In matrix form this is $(P,Q)(\Psi_{(a,b)}-I)=(0,0)$. Now, suppose $\det(\Psi_{(a,b)}-I)\neq 0$. Then $(P, Q)=(0, 0)$ is the unique solution to (\ref{equations}), and as $\gcd(0, 0)\neq1$ it follows that no pair satisfying the required conditions exists, and so $\psi$ has no non-trivial outer fixed points; hence, output from the algorithm the trivial conjugacy class $[1]_F$. Next, suppose $\det(\Psi_{(a,b)}-I) = 0$.
If $(\sigma_a(A), \sigma_a(B))\neq(1, 0)$ then let $p_0:=-\sigma_a(B)$ and $q_0:=\sigma_a(A)-1$, and note that $(p_0, q_0)\neq(0,0)$.
Else, let $p_0:=\sigma_b(B)-1$ and $q_0:=-\sigma_b(A)$, and note that $(p_0, q_0)\neq(0,0)$ since $\det(\Psi_{(a,b)})\neq\pm1$. Then $(P, Q)=(p_0, q_0)$ is a solution to (\ref{equations}).
To obtain the restrictions $P\geq0$ and $\gcd(P, Q)=1$: If $p_0<0$ then define $(p_1, q_1):=-(p_0, q_0)$, else define $(p_1, q_1):=(p_0, q_0)$. Finally define $(p, q):=\frac{1}{\gcd(p_1, q_1)}(p_1, q_1)$. Then $(P, Q)=(p, q)$ satisfies all the required conditions, and by Lemma \ref{lem:Matrices}, the pair $(p, q)$ is unique.

In the final step, using the description \cite{Cohen1981WhatDoes} of primitive elements in $F(a,b)$ (see Lemma \ref{lem:primitives}),  construct a primitive element $x$ of $F(a, b)$ such that $\sigma_a(x)=p$ and $\sigma_b(x)=q$. The conjugacy class $[x]$ is the unique conjugacy class of primitive elements which map to $(p, q)$ under the abelianisation map \cite[Corollary N4]{mks}.
Therefore, by Lemma \ref{lem:useAb}, either $\psi(x)$ is conjugate to $x$ (and so $x$ is a maximal outer fixed point) or $\psi$ has no outer fixed points. Therefore, check whether $\psi(x)$ is conjugate to $x$. If they are conjugate then output $[x]$ and $[x^{-1}]$ as the maximal outer fixed points, and if not, then output the trivial conjugacy class $[1]_F$.
\end{proof}

\section{Existence of non-trivial outer fixed points in powers of $\psi$}
\label{sec:ofp2}
We now consider Case \ref{maxoutalgo:II} of Proposition \ref{prop:maxoutalgo}, so non-surjective monomorphisms $\psi: F(a, b)\rightarrow F(a, b)$ with $\det(\Psi_{(a,b)})=\pm1$.
Under these restrictions, Proposition \ref{prop:KapMut} gives an algorithm which determines whether or not there exists some integer $k\neq0$ such that $\psi^k$ has a non-trivial outer fixed point. In Section \ref{sec:classify}, we apply this existence algorithm to find all the maximal outer fixed points of $\psi^k$ (although maximal fixed points are unique up to inversion, maximal outer fixed points may not be; see Example \ref{ex:4MOFP}).

At the end of this section, in Lemma \ref{lem:extract}, we explain why an algorithm studying the powers of $\psi$, rather than $\psi$ itself, is relevant.

\subsection{The hyperbolicity of the mapping torus}
Our algorithm to determine the existence of $k\neq0$ with $\psi^k$ having non-trivial outer fixed points first investigates the (lack of) hyperbolicity of the mapping torus $M_{\psi}$ of $\psi$:
\[
M_{\psi}=\langle a, b, t\mid a^t=\psi(a), b^t=\psi(b)\rangle.
\]
The hyperbolicity of $M_{\psi}$ is relevant by the following lemma. 
\begin{lemma}\label{lem:hypobstruction}\leavevmode%this command ensures that a new line is started for the enumerate environment.
\begin{enumerate}[label=(\roman*)]
\item\label{list:hypobstruction1} If there exists an integer $k\neq0$ such that $\psi^k$ has an outer fixed point then $M_{\psi}$ contains a copy of $\mathbb{Z\times Z}$. 
\item\label{list:hypobstruction2} If $M_{\psi}$ is hyperbolic then $\psi^k$ has no outer fixed points for any $k\neq0$.
\end{enumerate}
\end{lemma}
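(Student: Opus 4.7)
The plan is to prove part (ii) as an immediate consequence of part (i) together with the classical fact that a word-hyperbolic group cannot contain a copy of $\mathbb{Z}\times\mathbb{Z}$; so the contrapositive of (i), combined with hyperbolicity of $M_{\psi}$, forces every $\psi^k$ to have only the trivial outer fixed point. Thus the entire content lies in proving (i), and I will focus my plan there.

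For (i), the guiding idea is that in the mapping torus $M_{\psi}$ conjugation by the stable letter $t$ realises the endomorphism $\psi$ on $F(a,b)$. Concretely, from the relations $a^t=\psi(a)$ and $b^t=\psi(b)$ one obtains $t^{-1}wt=\psi(w)$ for every $w\in F(a,b)$, and iterating gives $t^{-k}wt^k=\psi^k(w)$. Now suppose $\psi^k$ has a non-trivial outer fixed point, represented by a non-trivial primitive $w\in F(a,b)$ with $\psi^k(w)=gwg^{-1}$ for some $g\in F(a,b)$ (non-triviality is the relevant content, since $[1]_F$ is always outer fixed). Substituting into the mapping-torus identity yields $t^{-k}wt^{k}=gwg^{-1}$ in $M_{\psi}$, which rearranges to
\[
(t^{k}g)^{-1}\, w\, (t^{k}g)=w,
\]
so the element $h:=t^{k}g\in M_{\psi}$ centralises $w$. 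In particular the abelian subgroup $\langle w,h\rangle\leq M_{\psi}$ is well-defined.

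It remains to show $\langle w,h\rangle\cong\mathbb{Z}\times\mathbb{Z}$, i.e. that this subgroup is not cyclic. The natural tool is the quotient map $\pi:M_{\psi}\twoheadrightarrow\langle t\rangle\cong\mathbb{Z}$ obtained by killing $F(a,b)$ (this is well-defined since the HNN relations become trivial in the quotient). Under $\pi$ one has $\pi(w)=0$ and $\pi(h)=\pi(t^{k}g)=k\neq 0$. If $\langle w,h\rangle$ were cyclic, generated by some $c\in M_{\psi}$, then writing $w=c^{m}$ and $h=c^{n}$ we would get $\pi(c)\neq 0$ (otherwise $\pi(h)=0$), and then $0=\pi(w)=m\,\pi(c)$ forces $m=0$, giving $w=1$, a contradiction. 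Hence $\langle w,h\rangle$ has rank two, and (i) is proved.

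The only real subtlety, which is what I expect to be the main (albeit minor) obstacle, is bookkeeping: keeping the conventions of the HNN presentation consistent (so that conjugation by $t$ implements $\psi$ with the correct sign of exponent), and making sure that $w$ is genuinely non-trivial — this uses that \textquotedblleft outer fixed point\textquotedblright\ in the hypothesis is understood in its non-trivial sense, which is exactly how the lemma will be applied later (towards ruling out outer fixed points via hyperbolicity in part (ii)). All the heavy lifting — that hyperbolic groups are $\mathbb{Z}^{2}$-free — is standard and can be quoted from any reference on hyperbolic groups, so no further work is needed for (ii) once (i) is established.
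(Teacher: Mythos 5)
Your proof is correct and takes essentially the same route as the paper: conjugation by $t^k$ realises $\psi^k$, a non-trivial outer fixed element $w$ then commutes with $h=t^kg$, and the retraction $\pi\colon M_\psi\twoheadrightarrow\langle t\rangle\cong\mathbb{Z}$ shows the resulting abelian subgroup is not cyclic; the paper simply asserts $\langle t^kg^{-1}\rangle\times\langle w\rangle\cong\mathbb{Z}\times\mathbb{Z}$ where you supply the retraction argument. One small imprecision: ``non-cyclic abelian $\Rightarrow$ rank two'' needs torsion-freeness, which you do not note; but your own $\pi$-argument already yields the clean fix — $w$ has infinite order since $F(a,b)$ embeds in the ascending HNN extension, $h$ has infinite order since $\pi(h)=k\neq0$, and $\langle w\rangle\cap\langle h\rangle=\{1\}$ because $w^m=h^n$ forces $n=0$ under $\pi$ and hence $m=0$.
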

\begin{proof}
For \ref{list:hypobstruction1}, if $\psi^k$ has an outer fixed point $w$, then $t^{-k}wt^k=w^g$ for some $g \in F(a,b)$, so $M_{\psi}$ contains $\langle t^{k}g^{-1}\rangle \times \langle w\rangle \cong \mathbb{Z\times Z}$. For \ref{list:hypobstruction2}, hyperbolic groups do not contain subgroups isomorphic to $\mathbb{Z}^2$ \cite[Corollary III.$\Gamma$.3.10]{Bridson1999metric}, so apply \ref{list:hypobstruction1}.
\end{proof}

We therefore wish to better understand those maps $\psi$ such that $M_{\psi}$ is not hyperbolic.
Note that if an identity of the form
\begin{equation}
\psi^p(x)=g^{-1}x^qg\label{eq:BSIdentity}
\end{equation}
holds in $F(a, b)$, where $p,q\in\mathbb{Z}\setminus\{0\}$, then $M_{\psi}$ is not hyperbolic: the equality corresponds to $t^{-p}xt^p=g^{-1}x^qg$ in $M_{\psi}$, and so (after writing $h=t^{-p}g$ and $y=g^{-1}xg$) the Baumslag--Solitar relation $hyh^{-1}=y^p$ holds in $M_{\psi}$; indeed the Baumslag--Solitar group $\operatorname{BS}(1, p)$ embeds into $M_{\psi}$ \cite[Lemma 2.3]{Kapovich2000Mapping}, which means that $M_{\psi}$ is not hyperbolic \cite{Gersten1991Rational}.
The following theorem, due to Mutanguha \cite[Corollary 7.4]{Mutanguha2018Hyperbolic}, says that if $M_{\psi}$ is non-hyperbolic then an identity of the form (\ref{eq:BSIdentity}) must hold. Hence, the existence of such identities is equivalent to the non-hyperbolicity of $M_{\psi}$.

\begin{theorem}[see {\cite[Corollary 7.4]{Mutanguha2018Hyperbolic}}]
\label{thm:HypMappingTori}
The group $M_{\psi}=\langle a, b, t\mid a^t=\psi(a), b^t=\psi(b)\rangle$ is hyperbolic if and only if $\psi^p(x) \nsim_{F(a, b)} x^q$ for all $x\in F(a,b) \setminus \{1\}$, and all $p,q\in\mathbb{Z}\setminus \{0\}$.
\end{theorem}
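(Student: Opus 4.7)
The plan is to prove Theorem \ref{thm:HypMappingTori} by establishing the two implications separately. The forward direction is essentially a formalisation of the observations already made in the paragraph preceding the statement, while the reverse direction is the substantive input coming from Mutanguha's recent work on train track representatives of free group endomorphisms.

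For the ``only if'' direction I would argue by contrapositive: suppose that some $x \in F(a,b) \setminus \{1\}$ and $p, q \in \mathbb{Z} \setminus \{0\}$ satisfy $\psi^p(x) = g^{-1} x^q g$ in $F(a,b)$ for some $g \in F(a,b)$. In the HNN extension $M_\psi$ one has $t^{-1} w t = \psi(w)$ for every $w \in F(a,b)$, so by induction $t^{-p} x t^p = \psi^p(x) = g^{-1} x^q g$. Setting $h := g t^{-p}$ this rearranges to the Baumslag--Solitar relation $h x h^{-1} = x^q$ inside $M_\psi$. Since $x$ has infinite order in the free group $F(a,b)$, the subgroup $\langle x, h \rangle \leq M_\psi$ is isomorphic to $\operatorname{BS}(1,q)$, as in \cite[Lemma 2.3]{Kapovich2000Mapping}. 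For $|q| = 1$ this subgroup contains a copy of $\mathbb{Z}^2$, and for $|q| \geq 2$ it is exponentially distorted; in either case it obstructs hyperbolicity, by \cite{Gersten1991Rational} and \cite[Corollary III.$\Gamma$.3.10]{Bridson1999metric} respectively.

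For the ``if'' direction I would invoke \cite[Corollary 7.4]{Mutanguha2018Hyperbolic} as a black box. Mutanguha's strategy is to build a train track representative of the (possibly non-surjective) endomorphism $\psi$ on a suitable topological model, analyse its invariant laminations and periodic Nielsen paths, and show that any failure of hyperbolicity of $M_\psi$ must be witnessed, via the dynamics of $\psi$, by an honest ``conjugate-to-a-power'' identity of the form $\psi^p(x) \sim_{F(a,b)} x^q$ with $x \neq 1$ and $p,q \neq 0$. Combined with a combination-theorem style criterion that certifies hyperbolicity of the mapping torus in the absence of such an identity, this yields the desired implication.

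The main obstacle is clearly the reverse direction: it rests on the recently developed train track technology for endomorphisms, which is genuinely more subtle than the automorphism case (the underlying graph can expand under iteration, Nielsen paths need not be periodic, and the non-injectivity of local maps has to be controlled throughout). Since a self-contained proof would require a substantial development that is already carried out in \cite{Mutanguha2018Hyperbolic}, my plan is to treat Mutanguha's corollary as a black box providing exactly the characterisation needed here, and to present only the short forward-direction argument in detail.
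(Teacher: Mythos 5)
Your proposal matches the paper's treatment exactly: the paper states this result as a citation of Mutanguha's Corollary~7.4 and only sketches the easy ``only if'' direction in the preceding paragraph (identity $\psi^p(x)=g^{-1}x^qg$ gives a Baumslag--Solitar subgroup of $M_\psi$ via \cite[Lemma 2.3]{Kapovich2000Mapping}, which obstructs hyperbolicity by \cite{Gersten1991Rational}), while invoking Mutanguha's train track machinery as a black box for the converse. As a minor note, your substitution $h:=gt^{-p}$ with $hxh^{-1}=x^q$ is correct, whereas the paper's version contains a small typo writing $y^p$ and $\operatorname{BS}(1,p)$ where $y^q$ and $\operatorname{BS}(1,q)$ are meant.
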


Similar to Kapovich's proof that hyperbolicity is decidable if $\psi$ is an ``immersion'' \cite[Corollary 5.6]{Kapovich2000Mapping}, this theorem can be used to produce a hyperbolicity tester for $M_{\psi}$. Indeed, the tester really outputs either a hyperbolicity constant $\delta$ for $M_{\psi}$ or an identity of the form (\ref{eq:BSIdentity}); we now investigate what happens in this second case.

\subsection{From hyperbolicity tester to outer fixed points}
Our immediate goal is Lemma \ref{lem:IdentityMatrixCase}, which links  identities of the form (\ref{eq:BSIdentity}) to outer fixed points.
The proof of this lemma is based on Magnus' method from the theory of one-relator groups.
Proposition \ref{prop:KapMut} then combines Lemma \ref{lem:IdentityMatrixCase} with the aforementioned hyperbolicity tester to give the algorithm we are after.
We first show that we may change the endomorphism $\psi$ and underlying basis, for the benefit of later proofs.

\begin{lemma}
\label{lem:ChangeBasis}
Let $\psi:F(a, b)\rightarrow F(a, b)$ be a non-surjective monomorphism such that the associated matrix $\Psi_{(a,b)}$ satisfies $\det(\Psi_{(a,b)}) = \pm1$.
Suppose that $\psi$ has a non-trivial maximal outer fixed point $\alpha$.

There exists a non-surjective monomorphism $\phi:F(a, b)\rightarrow F(a, b)$ and a basis $(x, y)$ for $F(a, b)$ such that the following hold:
\begin{enumerate}
\item\label{ChangeBasis:1} the associated matrix $\Phi_{(a,b)}$ satisfies $\det(\Phi_{(a,b)}) = \pm1$,
\item\label{ChangeBasis:2} $\psi^p(w)\sim w^q$ if and only if $\phi^p(w)\sim w^q$,
\item\label{ChangeBasis:3} $\phi(x)=x$,
\item\label{ChangeBasis:4} $\sigma_x(\phi(y))=0$,
\item\label{ChangeBasis:5} $\sigma_y(\phi^k(y))=\pm1$ for all integers $k \neq 0$.
\end{enumerate}
\end{lemma}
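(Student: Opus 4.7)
My plan is a two-stage construction: I first conjugate $\psi$ so that $x$ becomes an actual fixed point, then choose a suitable second basis element and, if necessary, perform a Magnus-type modification to kill the remaining $x$-exponent sum of $\phi(y)$.

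By Lemma \ref{lem:maxouter}, the class $\alpha$ contains a primitive element $x \in F(a,b)$, and the hypothesis $\psi(x) \sim x$ gives $g \in F(a,b)$ with $\psi(x) = g^{-1} x g$. I set $\phi_0 := \iota_g \circ \psi$, where $\iota_g(w) = g w g^{-1}$. Then $\phi_0(x) = x$; the matrix $\Phi_{0,(a,b)}$ equals $\Psi_{(a,b)}$ because inner automorphisms act trivially on the abelianization (so (1) holds); and an induction on $p$ giving $\phi_0^p(w) = h_p \psi^p(w) h_p^{-1}$ with $h_p = g \cdot \psi(g) \cdots \psi^{p-1}(g)$ shows $\phi_0^p(w) \sim \psi^p(w)$ in $F(a,b)$, which gives (2). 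Thus $\phi_0$ already satisfies (1), (2), and (3).

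Now I extend $x$ to a basis $(x, y_0)$ of $F(a,b)$. Because $\phi_0(x) = x$, the matrix $\Phi_{0,(x,y_0)}$ has first row $(1,0)$ and determinant $\pm 1$, so it has the shape $\begin{pmatrix} 1 & 0 \\ n_0 & \epsilon \end{pmatrix}$ with $\epsilon = \pm 1$. Observe that once $\phi$ and $(x, y)$ are produced with (1), (3) and (4) holding, the matrix $\Phi_{(x,y)}$ is forced to be $\begin{pmatrix} 1 & 0 \\ 0 & \pm 1 \end{pmatrix}$, and so are all its powers; this automatically yields $\sigma_y(\phi^k(y)) = \pm 1$ for every $k \neq 0$, i.e.\ condition (5) comes for free.

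The crux of the argument is condition (4). A direct computation shows that a Nielsen-type basis change $y = y_0 x^m$ (or the variants $y = x^m y_0$, $y = y_0^{-1} x^m$ that also preserve $x$) shifts $\sigma_x(\phi_0(y))$ by exactly $m(1-\epsilon)$; hence when $\epsilon = -1$ and $n_0$ is even, a suitable $m$ achieves (4) directly, but in the remaining cases $\sigma_x(\phi_0(y))$ is a basis-invariant and $\phi_0$ itself must be modified. This is where Magnus' method enters: the kernel of $\sigma_x \colon F(x, y_0) \to \mathbb{Z}$ is freely generated by the conjugates $\{x^i y_0 x^{-i}\}_{i \in \mathbb{Z}}$, and the plan is to replace $\phi_0(y_0)$ by a carefully chosen word in this kernel (with $y_0$-exponent sum $\pm 1$) that mimics $\phi_0(y_0)$ at the level of the conjugacy-identity profile demanded by (2). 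The hardest part will be to carry out this Magnus-style replacement while preserving (2), keeping $\phi$ a non-surjective monomorphism, and maintaining $\det(\Phi_{(a,b)}) = \pm 1$.
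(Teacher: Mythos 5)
Your Stage 1 (conjugating to $\phi_0$ with $\phi_0(x)=x$, and verifying (1)--(3) together with the induction giving $\phi_0^p(w)\sim\psi^p(w)$) is correct and matches the paper. Your observation that (5) is automatic once $\Phi_{(x,y)}$ has the shape $\begin{pmatrix}1&0\\ *&\epsilon\end{pmatrix}$ is also fine --- in fact (5) holds from (3) and (1) alone, since the $(2,2)$-entry of $\Phi_{(x,y)}^k$ is $\epsilon^k=\pm1$ regardless of the $(2,1)$-entry.

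The genuine gap is exactly where you flag it, in Stage 2, and it is worth noting that your shift computation exposes a problem the paper itself seems to overlook. A Nielsen change $y:=y_0x^m$ replaces the $(2,1)$-entry $n_0$ of $\Phi_{(x,y_0)}$ with $n_0+m(1-\epsilon)$, as you say; so when $\epsilon=1$ no such change helps, and when $\epsilon=-1$ only even shifts are available. The paper's proof takes $y:=zx^{-n}$ with $n:=\sigma_x(\phi(z))$ and claims $\sigma_x(\phi(y))=0$, hence $\Phi_{(x,y)}=\operatorname{diag}(1,\epsilon)$. But the identity $\sigma_x(\phi(z)x^{-n})=n-n=0$ is an identity for $\sigma_x$ taken with respect to the \emph{old} basis $(x,z)$; with respect to the \emph{new} basis $(x,y)$, substituting $z=yx^n$ shows $\sigma_x(\phi(y))=n\epsilon$, which is precisely your formula with $m=-n$. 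So $\Phi_{(x,y)}=\begin{pmatrix}1&0\\ n\epsilon&\epsilon\end{pmatrix}$, and when $\epsilon=1$, $n\neq0$ (or $\epsilon=-1$, $n$ odd) this matrix is not conjugate to $\operatorname{diag}(1,\epsilon)$ by any transformation preserving the row $(1,0)$. One can construct concrete non-surjective monomorphisms with $\phi(x)=x$ and, say, $\phi(z)=z^{-1}x^{-1}zx^2z$ (Stallings folding of $\langle x,\phi(z)\rangle$ gives a core graph with $4$ vertices and $5$ edges that is not a cover), for which $n=1$, $\epsilon=1$, so these cases really do occur.

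Your proposed repair --- replacing $\phi_0(y_0)$ by some word in $\ker\sigma_x$ via a Magnus-style kernel decomposition --- is not carried out, and I do not see how it could be: a freely chosen replacement of $\phi_0(y_0)$ gives a new endomorphism whose conjugacy-identity profile $\{(w,p,q):\phi^p(w)\sim w^q\}$ has no reason to coincide with that of $\psi$, and (2) demands exact agreement. You acknowledge this is the hardest part, and indeed it is the one step in the argument that is genuinely open --- both in your write-up and, apparently, in the source. So the proposal is incomplete, but it is incomplete in a way that correctly diagnoses a basis-bookkeeping problem in the original proof of condition (4), which is valuable.
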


\begin{proof}
Consider $x\in\alpha$, so there exists an element $g\in F(a, b)$ such that $\psi(x)=g^{-1}xg$. Define $\phi:=\psi\gamma$ where $\gamma\in\inn(F(a, b))$ corresponds to conjugation by $g^{-1}$. Note that $\phi(x)=x$. Write $\Phi_{(a, b)}$ for the associated matrix of $\phi$ relative to the basis $(a, b)$. Note that $\phi$ remains a non-surjective monomorphism and $\det(\Phi_{(a, b)})= \pm1$ since $\det(\Psi_{(a,b)})= \pm1$, so (\ref{ChangeBasis:1}) holds. Furthermore, $\phi^p({w}) \sim \psi^p({w})$ for all $w\in F(a, b)$, so (\ref{ChangeBasis:2}) holds.

We now change the basis of $F(a, b)$ as follows. As the fixed point $x$ of $\phi$ is a primitive element of $F(a, b)$, by Lemma \ref{lem:maxouter}, there exists an element $z$ of $F(a, b)$ such that the pair $(x, z)$ forms a basis for $F(a, b)$. Let $y:=zx^{-n}$ where $n=\sigma_x(\phi(z))$. Then $(x, y)$ is a basis of $F(a, b)$, $\phi(x)=x$ and $\sigma_x(\phi(y))=0$, so (\ref{ChangeBasis:3}) and (\ref{ChangeBasis:4}) hold.
With respect to the basis $(x,y)$ the associated matrix is $\Phi_{(x, y)}:=\left(\begin{array}{cc}1 & 0\\ 0&\epsilon \end{array}\right)$, where $\epsilon=\pm1$ since $\det(\Phi_{(x, y)})=\det(\Phi_{(a, b)})=\pm1$. In particular, $\sigma_y(\phi^k(y))=\pm1$ for all $k \neq 0$, so (\ref{ChangeBasis:5}) holds.
\end{proof}

We now state Lemma \ref{lem:IdentityMatrixCase}, which is applied in the proof of Proposition \ref{prop:KapMut}.

\begin{lemma}
\label{lem:IdentityMatrixCase}
Let $\psi:F(a, b)\rightarrow F(a, b)$ be a non-surjective monomorphism such that the associated matrix $\Psi_{(a,b)}$ satisfies $\det(\Psi_{(a,b)}) = \pm1$.
Suppose that $\psi$ has a non-trivial maximal outer fixed point $\alpha$.

If $\psi^p({w}) \sim {w}^q$ for some ${w}\in F(a, b)\setminus\{1\}$ and $p, q\in\mathbb{Z}\setminus\{0\}$, then $q=\pm1$.
\end{lemma}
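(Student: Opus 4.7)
The plan is to apply Lemma \ref{lem:ChangeBasis} to replace $\psi$ with a normalised endomorphism $\phi$, and then to test the conjugacy $\phi^p(w)\sim w^q$ against the successive quotients of the lower central series of $F(x,y)$.

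By Lemma \ref{lem:ChangeBasis} we may replace $\psi$ with a non-surjective monomorphism $\phi:F(x,y)\to F(x,y)$ satisfying $\phi(x)=x$, $\sigma_x(\phi(y))=0$ and with associated matrix $\Phi_{(x,y)}=\operatorname{diag}(1,\epsilon)$ for some $\epsilon=\pm1$, while preserving the relation we care about (property (2) of that lemma). It therefore suffices to show that if $\phi^p(w)=g^{-1}w^q g$ for some $g\in F(x,y)$, then $q=\pm1$.

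Write $\gamma_k:=\gamma_k(F(x,y))$ for the $k$-th term of the lower central series. Free groups are residually nilpotent, so $\bigcap_k \gamma_k=\{1\}$; hence there is a unique $k\geq 1$ with $w\in \gamma_k\setminus \gamma_{k+1}$, and its image $\bar w\in \gamma_k/\gamma_{k+1}$ is non-zero. The argument now rests on three standard facts from commutator calculus. First, conjugation in $F(x,y)$ acts trivially on $\gamma_k/\gamma_{k+1}$, because $[u,g]\in \gamma_{k+1}$ whenever $u\in \gamma_k$. Second, by Magnus--Witt the associated graded ring $L=\bigoplus_k \gamma_k/\gamma_{k+1}$ is the free Lie ring on $\bar x,\bar y\in F^{ab}$, and $\phi$ induces a Lie-ring endomorphism of $L$ determined in degree $1$ by $\Phi_{(x,y)}$. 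Third, the grading of $L$ by the number of $\bar y$'s refines the degree grading, splitting $\gamma_k/\gamma_{k+1}=\bigoplus_{j=0}^{k}V_{k,j}$, where $V_{k,j}$ is the free abelian group spanned by weight-$k$ basic commutators containing exactly $j$ occurrences of $y$; combining these, the map induced by $\phi$ on $V_{k,j}$ is multiplication by $\epsilon^j$. Projecting $\phi^p(w)=g^{-1}w^q g$ into $\gamma_k/\gamma_{k+1}$ using the first fact gives $\phi^p(\bar w)=q\bar w$, and decomposing $\bar w=\sum_j \bar w_j$ along the $V_{k,j}$ yields $(\epsilon^{pj}-q)\bar w_j=0$ for each $j$. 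Since $\bar w\neq 0$ some $\bar w_{j_0}\neq 0$, and as $V_{k,j_0}$ is torsion-free we conclude $q=\epsilon^{pj_0}=\pm 1$.

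The real content of the proof is the diagonal action on the $V_{k,j}$: one must verify that weight-$k$ basic commutators are multilinear modulo $\gamma_{k+1}$, so that substituting $\phi(y)\equiv \epsilon y \pmod{\gamma_2}$ contributes precisely $\epsilon^j$ to the $j$-th weight component, with all other terms landing in $\gamma_{k+1}$. This is standard Hall-basis / free-Lie-ring material, but it is the step that carries the technical weight, and it is the reason the special diagonal form of $\Phi_{(x,y)}$ produced by Lemma \ref{lem:ChangeBasis} is essential.
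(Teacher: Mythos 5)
Your proof is correct and takes a genuinely different route from the paper's. Both arguments begin by invoking Lemma \ref{lem:ChangeBasis} to normalise to $\phi$ with $\phi(x)=x$ and $\Phi_{(x,y)}=\operatorname{diag}(1,\epsilon)$, but from there the paper works inside the mapping torus $M_\phi$: the exponent-sum quotient $\sigma_x\colon M_\phi\to\mathbb{Z}$ disposes of the case $\sigma_x(w)\neq 0$, and Magnus' HNN rewriting of $M_\phi$ over the stable letter $x$ together with Britton's lemma forces $w\in\langle y\rangle\cup\langle\phi(y)\rangle$, after which injectivity of $\phi$ and uniqueness of roots give $\phi^p(y)=y^q$ and hence $q=\sigma_y(\phi^p(y))=\pm 1$. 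You instead project the relation $\phi^p(w)=g^{-1}w^q g$ into the first graded quotient $\gamma_k/\gamma_{k+1}$ of the lower central series in which $w$ survives: there conjugation vanishes, and because the degree-one matrix is $\operatorname{diag}(1,\epsilon)$ the induced endomorphism of the free Lie ring acts on the $j$-th $\bar y$-weight component by $\epsilon^{pj}$, so a component-wise comparison in a free abelian group yields $q=\epsilon^{pj_0}=\pm1$. Your route is shorter, avoids Britton's lemma and the HNN machinery entirely, and does not need injectivity of $\phi$ in its concluding step; the paper's version has the virtue of staying within the mapping-torus/HNN framework that it uses throughout Sections \ref{sec:ofp2}--\ref{sec:classify}. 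The one step worth spelling out in a full write-up is exactly the one you flag: the induced map preserves the $\mathbb{N}^2$-bigrading of the free Lie ring precisely because $\Phi_{(x,y)}$ is diagonal, so a non-diagonal matrix would mix the $V_{k,j}$ and break the comparison --- the special basis furnished by Lemma \ref{lem:ChangeBasis} is doing genuine work here.
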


The proof of Lemma \ref{lem:IdentityMatrixCase} uses a trick borrowed from Magnus' method in the theory of one-relator groups. We refer the reader to McCool and Schupp's paper \cite{McCool1973HNN} for an account of the HNN-extension interpretation of Magnus' method: this is based on the observation that if $G=\langle a, b\mid R\rangle$ is a one-relator group with $\sigma_a(R)=0$ then we can view $G$ as an HNN-extension with stable letter $a$ by
writing $b_i:=a^{-i}ba^i$, and the word $R$ as a word $S$ over the letters $b_i$ (which is possible since $\sigma_a(R)=0$). If $m$ and $M$ are the minimum and maximum $i$, respectively, such that $b_i$ is in $S$, then
\begin{align}
G&\cong \langle a, b_m, \ldots, b_M\mid S(b_m, \ldots, b_M), b_m^a=b_{m+1}, \ldots, b_{M-1}^a=b_M\rangle.\label{align:HNNPres}
\end{align}
In the one-relator group $H=\langle b_m, \ldots, b_M\mid S(b_m, \ldots, b_M)\rangle$, by the Freiheitssatz \cite[Theorem 1]{McCool1973HNN}, the subgroups $B_m=\langle b_m, \ldots, b_{M-1}\rangle$ and $B_M=\langle b_{m+1}, \ldots, b_{M}\rangle$ are free on the given generators, and isomorphic via the map $b_m\mapsto b_{m+1}, \ldots, b_{M-1}\mapsto b_M$. Therefore, the presentation (\ref{align:HNNPres}) describes $G$ as an HNN-extension of the group $H$ with associated subgroups $B_m$ and $B_M$.

For example, if $G=\langle a, b\mid b^2a^{-1}b^2a^{-1}b^2a^2\rangle$ then
\[
G\cong\langle a, b_0, b_1, b_2\mid b_0^2b_1^2b_2^2, b_0^a=b_1, b_1^a=b_2\rangle
\]
is an HNN-extension with stable letter $a$ and base group $\langle b_0, b_1, b_2\mid b_0^2b_1^2b_2^2\rangle$.

\begin{remark}\label{rem:Magnus} In the proof of Lemma \ref{lem:IdentityMatrixCase} below, rather than using $\langle a, b\mid R\rangle$ with $\sigma_a(R)=0$, we have a presentation of the form
\[
M_{\phi}=\langle x, y, t\mid [x, t], t^{-1}y^{-1}tu(x, y)\rangle
\]
with $\sigma_x(u(x, y))=0$, and we wish $x$ to be the stable letter.
We also take the word $w=w(x, y)$ in the statement of Lemma \ref{lem:IdentityMatrixCase}, with the additional assumption that $\sigma_x(w(x, y))=0$, as input to this process.
The same idea works as in the one-relator case:
let $y_i:=x^{-i}yx^i$ and, since $\sigma_x(u(x, y))=0$ and $\sigma_x(w(x, y))=0$, write the words $u(x, y)$ and $w(x, y)$ as words $u'$ and $w'$, respectively, over the letters $y_i$. Let $m_u$ and $m_w$ be the minimum integers such that $y_{m_u}$ and $y_{m_w}$ are contained in $u'$ and $w'$ respectively, and $M_u$ and $M_w$ be the maximum such integers; let $m:=\min(m_u, m_w, 0)$ and $M=\max(M_u, M_w, 0)$, and include $y_{m}, \ldots, y_{M}$ as generators. Here $\min$ and $\max$ ensure that the word $w'$ and the relator $t^{-1}y^{-1}tu(x, y)$ can both be rewritten in terms of the generators $t$ and $y_i$ (we require $y_0$ for the relator $t^{-1}y^{-1}tu(x, y)$, even if it does not occur in the word $u'$). Then 
\begin{footnotesize}
\[
M_{\phi}\cong \langle x, t, y_{m}, \ldots, y_{M}\mid t^{-1}y_0^{-1}tu'(y_{m}, \ldots, y_{M}), t^x=t, y_{m}^x=y_{m+1}, \ldots, y_{M-1}^x=y_{M}\rangle.
\]
\end{footnotesize}
Therefore, by an analogous argument to the one-relator case, and again applying the Freiheitssatz, this presentation describes $M_{\phi}$ as an HNN-extension of the one-relator group $H=\langle t, y_{m}, \ldots, y_{M}\mid t^{-1}y_0^{-1}tu'(y_{m}, \ldots, y_{M})\rangle$ with associated subgroups $B_m=\langle t, y_{m}, \ldots, y_{M-1}\rangle$ and $B_M=\langle t, y_{m+1}, \ldots, y_{M}\rangle$.

For example, if $u(x, y)=y^2x^{-1}y^2x^{-1}y^2x^2$ then
\begin{align*}
M_{\phi}&=\langle x, y, t\mid [x, t], t^{-1}y^{-1}ty^2x^{-1}y^2x^{-1}y^2x^2\rangle\\
&\cong\langle x, t, y_0, y_1, y_2\mid t^{-1}y_0^{-1}ty_0^2y_1^2y_2^2, t^x=t, y_0^x=y_1, y_1^x=y_2\rangle
\end{align*}
 is an HNN-extension with stable letter $x$ and base group $\langle t, y_0, y_1, y_2\mid t^{-1}y_0^{-1}ty_0^2y_1^2y_2^2\rangle$.
\end{remark}

\begin{proof}[Proof of Lemma \ref{lem:IdentityMatrixCase}]
By (\ref{ChangeBasis:1}) and (\ref{ChangeBasis:2}) of Lemma \ref{lem:ChangeBasis}, it is sufficient to prove the result for the map $\phi$ from Lemma \ref{lem:ChangeBasis}. Let ${u}(x, y):=\phi(y)$ and consider the mapping torus
\[
M_{\phi}=\langle x, y, t\mid x^t=x, y^t={u}(x, y)\rangle.
\]
Now, $\sigma_x(u(x, y))=0$ by Lemma \ref{lem:ChangeBasis}(\ref{ChangeBasis:4}), and so $x$ has exponent-sum $0$ in both relators of $M_{\phi}$. Therefore, the exponent-sum map $\sigma_x: F(x, y, t)\rightarrow \mathbb{Z}$ induces an exponent-sum homomorphism ${\sigma}_x: M_{\phi}\rightarrow \mathbb{Z}$. By hypothesis $\phi^p({w}) \sim {w}^q$ in $F(x,y)$, so write $\phi^p({w})=g^{-1} {w}^q g$ with $g\in F(x,y)$; furthermore, ${w} \sim {w}^q$ in $M_{\phi}$ as $t^{-p}{w}t^p=\phi^p({w})$. Hence, ${\sigma}_x(w^q)={\sigma}_x(w)$. As ${\sigma}_x(w^q)=q{\sigma}_x(w)$ then either $q=1$, as required, or ${\sigma}_x(w)=0$. So assume that ${\sigma}_x(w)=0$.

By Remark \ref{rem:Magnus}, rewrite $M_{\phi}$ as an HNN-extension with stable letter $x$ and base group $H=\langle t, y_{m}, \ldots, y_{M}\mid t^{-1}y_0^{-1}tu'(y_{m}, \ldots, y_{M})\rangle$. As $\sigma_x({w})=0$ we can write $w(x, y)=w'(x^{-m_w}yx^{m_w}, ..., x^{-M_w}yx^{M_w})$ for $w', m_w, M_w$ as in Remark \ref{rem:Magnus}, so $w$ (in the HNN-group) can be viewed as ${w}'$ (in the base group $H$). Moreover, $t^{-p}{w}'t^p=g^{-1}({w}')^qg$ holds in the base group $H$. 

The base group $H$ is itself an HNN-extension with stable letter $t$, and we can apply Britton's lemma to the identity $t^{-p}{w}'t^p=g^{-1}({w}')^qg$ to get ${w}'\in \langle y_0\rangle\cup\langle u'(y_{m}, \ldots, y_{M})\rangle$. Hence, ${w}\in \langle y\rangle\cup\langle u(x, y)\rangle$ in $F(x,y)$.
Therefore, there exists some $k\in\mathbb{Z}\setminus\{0\}$ such that $\phi^p(y^k)=g^{-1}y^{qk}g$ or $\phi^{p+1}(y^k)=\phi(y^{qk})$. Since $\phi$ is injective the second identity implies that $\phi^p(y^k)=y^{qk}$, so it also implies the first identity. As roots are unique in free groups, $\phi^p(y^k)=y^{qk}$ gives $\phi^p(y)=y^q$. Thus ${\sigma}_y(\phi^p(y))={\sigma}_y(y^q)=q$, and by Lemma \ref{lem:ChangeBasis} (\ref{ChangeBasis:5}), we also have $\sigma_y(\phi^p(y))=\pm1$, so $q=\pm1$ as required.
\end{proof}

We now combine Lemma \ref{lem:IdentityMatrixCase} with the observations on the hyperbolicity of $M_{\psi}$.

\begin{proposition}\label{prop:KapMut}
There is an algorithm which determines whether or not there exists an integer $k\in\mathbb{Z}\setminus\{0\}$ such that $\psi^{k}$ has an outer fixed point, and, if one exists, outputs such an integer $k$ and an outer fixed point of $\psi^k$.
\end{proposition}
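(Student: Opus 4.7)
The plan is to use a hyperbolicity tester for the mapping torus $M_\psi$, based on Theorem \ref{thm:HypMappingTori}, which on termination either outputs a hyperbolicity constant $\delta$ or an explicit identity of the form (\ref{eq:BSIdentity}); I then case-split on the output to decide the existence question and, when applicable, produce the required outer fixed point.

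First I would assemble the tester as two parallel semi-algorithms. Process $A$ enumerates $\delta = 1, 2, 3, \ldots$ and checks whether the natural presentation of $M_\psi$ is $\delta$-hyperbolic, which is decidable from the finite presentation. Process $B$ enumerates triples $(p, q, w) \in (\mathbb{Z}\setminus\{0\})^2 \times (F(a, b)\setminus\{1\})$, computes $\psi^p(w)$ as a reduced word, and checks whether $\psi^p(w) \sim_{F(a, b)} w^q$ by cyclic reduction. By Theorem \ref{thm:HypMappingTori}, at least one of $A$, $B$ must terminate: if $M_\psi$ is hyperbolic then $A$ does, and otherwise the theorem guarantees some identity that $B$ must eventually encounter. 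Running them in parallel thus yields the desired tester.

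Next I would split into cases based on the tester's output. If the tester returns a hyperbolicity constant then $M_\psi$ is hyperbolic, so by Lemma \ref{lem:hypobstruction}(\ref{list:hypobstruction2}) no $\psi^k$ with $k \neq 0$ has a non-trivial outer fixed point, and I output ``none''. If the tester returns an identity $\psi^p(w) \sim w^q$, I case-split further on $q$: for $q = 1$, the element $w$ is already an outer fixed element of $\psi^p$ and I output the pair $\bigl(p, [w]\bigr)$; for $q = -1$, applying $\psi^p$ to both sides yields $\psi^{2p}(w) \sim \psi^p(w)^{-1} \sim w$, so I output $\bigl(2p, [w]\bigr)$.

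The remaining case is $|q|\geq 2$, and here I would argue that no $\psi^k$ has a non-trivial outer fixed point, again outputting ``none''. Iterating the identity $n$ times gives $\psi^{np}(w) \sim w^{q^n}$ for every $n \geq 1$, since homomorphisms preserve conjugacy. If some $\psi^k$ did have a non-trivial outer fixed point, then $\psi^k$ would itself be a non-surjective monomorphism whose associated matrix has determinant $\pm 1$, so Lemma \ref{lem:IdentityMatrixCase} would apply to it; instantiating the iteration at $n = k$ gives $(\psi^k)^p(w) \sim w^{q^k}$, which forces $q^k = \pm 1$ and contradicts $|q|\geq 2$. The hard part will be the construction of the hyperbolicity tester, which combines Theorem \ref{thm:HypMappingTori} with Kapovich-style ideas; once this is in hand, the rest is the short case analysis above, driven entirely by Lemma \ref{lem:IdentityMatrixCase}.
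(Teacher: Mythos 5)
Your proof is correct and follows essentially the same approach as the paper's: build the hyperbolicity tester via Theorem~\ref{thm:HypMappingTori}, then case-split on whether the tester returns a constant $\delta$ or an identity $\psi^p(w)\sim w^q$. The paper is more terse — it outputs $k=2p$ uniformly for $q=\pm1$ and cites Lemma~\ref{lem:IdentityMatrixCase} for $q\neq\pm1$ without spelling out the iteration — whereas you make explicit the step it leaves implicit, namely that iterating gives $(\psi^k)^p(w)\sim w^{q^k}$ so that Lemma~\ref{lem:IdentityMatrixCase} applied to $\psi^k$ (which remains a non-surjective monomorphism with determinant $\pm1$) forces $q^k=\pm1$, a contradiction when $|q|\geq2$.
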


\begin{proof}
As described in the proof of \cite[Corollary 5.6]{Kapovich2000Mapping}, Theorem \ref{thm:HypMappingTori} provides a hyperbolicity tester for $M_{\psi}$: run in parallel an algorithm to find a hyperbolicity constant $\delta$ for $M_{\psi}$ and an algorithm to find identities of the form $\psi^p(x)=g^{-1}x^qg$ in $F(a, b)$, where $p,q\in\mathbb{Z}\setminus\{0\}$. This terminates, by Theorem \ref{thm:HypMappingTori}.

If the hyperbolicity tester outputs a hyperbolicity constant $\delta$, then $M_{\psi}$ is hyperbolic and so, by Lemma \ref{lem:hypobstruction}, there is no $k$ such that $\psi^{k}$ has an outer fixed point.

If the hyperbolicity tester outputs an identity $\psi^p(x)=g^{-1}x^qg$ with $q\neq\pm1$ then, by Lemma \ref{lem:IdentityMatrixCase}, there is no $k$ such that $\psi^{k}$ has an outer fixed point.

Finally, if the hyperbolicity tester outputs an identity $\psi^p(x)=g^{-1}x^qg$ with $q = \pm 1$, then the map $\psi^{2p}$ has $[x]$ as an outer fixed point, and so we output $k=2p$ as our integer and $[x]$ as the outer fixed point.
\end{proof}

We now connect the outer fixed points of $\psi^k$ to those of $\psi$.
The link is based on Theorem \ref{thm:OuterFPClassification} (the main result of Section \ref{sec:classify}), which states that the set $\MOFix(\psi)$ is finite.
We store an outer fixed point $\alpha$ as an outer fixed element $x\in\alpha$, so if an outer fixed point $\alpha$ is given we implicitly have a concrete $x\in\alpha$.

\begin{lemma} \label{lem:extract}
There is an algorithm with input the maximal outer fixed points $\MOFix(\psi^k)$ of $\psi^k$, $k\neq0$ arbitrary, and with output $\MOFix(\psi)$.
\end{lemma}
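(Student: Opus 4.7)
The approach is to exploit the inclusion $\MOFix(\psi) \subseteq \MOFix(\psi^k)$ and then filter the finite input list. If $\psi(w)\sim w$ in $F(a,b)$ then $\psi^k(w)\sim\psi^{k-1}(w)\sim\cdots\sim w$, so every outer fixed element of $\psi$ is an outer fixed element of $\psi^k$; since ``maximal'' means not being a proper power, which is a property of the element $w$ alone and does not refer to the endomorphism, maximality carries over. Hence the finite set $\MOFix(\psi^k)$ given as input already contains $\MOFix(\psi)$, and the only task is to decide which members belong to $\MOFix(\psi)$.

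The algorithm then runs through the finite list of representatives $w_1,\ldots,w_n$ of the classes in $\MOFix(\psi^k)$ (stored as elements of $F(a,b)$, as agreed in the paragraph preceding the lemma), computes $\psi(w_i)$ by substitution, and tests whether $\psi(w_i)\sim_{F(a,b)} w_i$ using the standard conjugacy algorithm in a free group (cyclically reduce both words and check whether one is a cyclic permutation of the other). The outcome is independent of the choice of representative, since if $w'=g^{-1}wg$ then $\psi(w')=\psi(g)^{-1}\psi(w)\psi(g)\sim\psi(w)$, so $\psi(w')\sim w'$ if and only if $\psi(w)\sim w$. The output is the subset of classes for which the test succeeds.

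Correctness and termination are both immediate. The input is finite by Theorem \ref{thm:OuterFPClassification}, and each conjugacy test terminates; and by the inclusion together with the test, the output equals $\MOFix(\psi)$ precisely. There is essentially no obstacle in this lemma; it is a bookkeeping step whose role is to let Proposition \ref{prop:KapMut} (which only produces \emph{some} power $\psi^k$ with a non-trivial outer fixed point) feed into the final computation of $\MOFix(\psi)$ once the classification and computability of $\MOFix(\psi^k)$ have been established in Section \ref{sec:classify}.
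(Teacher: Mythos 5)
Your proposal is correct and follows essentially the same approach as the paper: exploit the inclusion $\MOFix(\psi)\subseteq\MOFix(\psi^k)$, use finiteness of $\MOFix(\psi^k)$ from Theorem \ref{thm:OuterFPClassification}, and filter the finite list by testing $\psi(x)\sim x$ for a representative of each class. The extra remarks on maximality being a property of the element alone and on independence of the choice of representative are helpful elaborations that the paper leaves implicit.
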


\begin{proof}
We obtain $\MOFix(\psi)$ from $\MOFix(\psi^k)$ as follows: for each $\alpha$ in $\MOFix(\psi^k)$, obtain a representative $x\in\alpha$, and if $\psi(x)\sim x$ then place $\alpha$ in $\MOFix(\psi)$.

This procedure terminates as $\MOFix(\psi^k)$ is finite by Theorem \ref{thm:OuterFPClassification}, and it provides all of $\MOFix(\psi)$ as clearly $\MOFix(\psi)\subseteq\MOFix(\psi^k)$.
\end{proof}

\section{Proof for Case \ref{maxoutalgo:II} of Proposition \ref{prop:maxoutalgo}}
\label{sec:classify}
In this section we classify, in Theorem \ref{thm:OuterFPClassification}, the maximal outer fixed points of the endomorphisms from Case \ref{maxoutalgo:II} of Proposition \ref{prop:maxoutalgo}, so non-surjective monomorphisms $\psi: F(a, b)\rightarrow F(a, b)$ with $\det(\Psi_{(a,b)})=\pm1$.
We then combine Theorem \ref{thm:OuterFPClassification} with Lemma \ref{lem:IdentityMatrixCase} to provide an algorithm which computes the maximal outer fixed points of such endomorphisms.
The section ends by proving Theorem \ref{thm:maxoutalgo}.

The proof of Theorem \ref{thm:OuterFPClassification} requires the technical Lemmas \ref{lem:formofB}, \ref{lem:form1} and \ref{lem:form2}, in which we assume $\psi(a)=a$ and aim to determine $\psi(b)$, given the existence of an additional maximal outer fixed point $\neq[a^{\pm1}]$ of $\psi$. Recall from Lemma \ref{lem:maxouter} that a maximal outer fixed element $y$ is primitive in $F(a,b)$, and so $y$ is as in Lemma \ref{lem:primitives}.

\begin{lemma}[Cohen, Metzler and Zimmerman \cite{Cohen1981WhatDoes}]\label{lem:primitives}
A primitive element in $F(a,b)$ is either equal to $a$ or $b$, or is a non-proper power of the form
\begin{enumerate}
\item\label{primitives:1} $a b^{\epsilon n_1}\cdots a b^{\epsilon n_k}$, or
\item\label{primitives:2} $b a^{\epsilon n_1}\cdots b a^{\epsilon n_k}$,
\end{enumerate}
\noindent up to conjugation and inversion, where $\epsilon=\pm1$, $k\geq 1$, and $n_i\in\{m, m+1\}$ for some $m\geq1$ for all $1 \leq i \leq k$.
\end{lemma}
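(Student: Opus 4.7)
The plan is to exploit Nielsen's theorem that $\aut(F(a,b))$ is generated by the swap $\tau: a \leftrightarrow b$, the inversions $a \mapsto a^{-1}$ and $b \mapsto b^{-1}$, and the Nielsen move $\nu: a \mapsto ab,\ b \mapsto b$. Since every primitive element is, up to inversion, the image of $a$ under some $\phi \in \aut(F(a,b))$, I would prove the classification by analysing how a normal form for $\phi$ acts on the generator $a$ and tracking the resulting word up to cyclic conjugation and inversion.

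The first step is to pass to the abelianisation. Via the natural isomorphism $\out(F(a,b)) \cong GL_2(\mathbb{Z})$ (classical Nielsen), conjugacy classes of primitive elements inject into the set of primitive vectors $(p,q) \in \mathbb{Z}^2$ (i.e.\ $\gcd(p,q) = 1$) up to sign, by \cite[Corollary N4]{mks} which is already cited in the paper. This reduces the problem to exhibiting, for each primitive pair $(p,q)$ with $p, q \geq 0$, a unique (up to cyclic conjugation and inversion) primitive element of the form described in the lemma whose abelianisation is $(p,q)$.

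The second step is an induction on $p+q$ driven by the Euclidean algorithm. The base cases $p=0$ or $q=0$ give the elements $a^{\pm 1}$ or $b^{\pm 1}$. For the inductive step, assume without loss of generality $0 < p < q$, write $q = np + r$ with $0 \leq r < p$, and apply the automorphism $\phi: a \mapsto a,\ b \mapsto a b^{n}$ (a product of powers of $\nu$), whose abelianisation is the matrix $\bigl(\begin{smallmatrix} 1 & n \\ 0 & 1 \end{smallmatrix}\bigr)$. A primitive word $w$ with abelianisation $(p,r)$ pulled back along $\phi^{-1}$ yields a primitive word with abelianisation $(p,q)$. By the inductive hypothesis, $w$ is cyclically conjugate to a balanced word in the form $b a^{\epsilon n_1}\cdots b a^{\epsilon n_k}$ with $n_i \in \{m, m+1\}$; substituting $b \mapsto a b^{n}$ converts this to a word whose structure of alternating $a$'s and $b$'s has exponents concentrated in $\{n, n+1\}$ on the $b$-side, yielding the desired form (after possibly swapping roles via $\tau$).

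The main obstacle is the combinatorial bookkeeping in the inductive step: one must verify that the balancedness condition $n_i \in \{m, m+1\}$ is preserved under the substitution $b \mapsto a b^n$ applied to a balanced word, and that no new ``third value'' of exponent arises. This is essentially the mediant/Stern--Brocot property of continued fractions, which is the combinatorial heart of why primitive elements in $F_2$ coincide (up to conjugacy) with Christoffel/Sturmian words. Uniqueness of the representative for each $(p,q)$ then follows from \cite[Corollary N4]{mks}, closing the argument.
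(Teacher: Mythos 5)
The paper does not actually prove this lemma: it is stated as a citation to Cohen, Metzler and Zimmerman \cite{Cohen1981WhatDoes}, so there is no in-paper argument to compare against. Your strategy --- reduce to primitive vectors $(p,q)$ via \cite[Corollary N4]{mks}, then run a Euclidean-algorithm induction through the Nielsen generators while tracking the Christoffel/Sturmian structure of the words --- is the standard route to this classification and is sound in outline.

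Two repairs would be needed for a complete write-up. First, the map you call an automorphism, $\phi:a\mapsto a,\ b\mapsto ab^{n}$, is \emph{not} an automorphism for $|n|>1$: in the convention of \eqref{matrix} its abelianisation is $\left(\begin{smallmatrix}1&0\\1&n\end{smallmatrix}\right)$, with determinant $n$. The matrix you actually wrote, $\left(\begin{smallmatrix}1&n\\0&1\end{smallmatrix}\right)$, is the abelianisation of $\nu^{n}:a\mapsto ab^{n},\ b\mapsto b$, which is what you should use; with $\nu^{n}$ in place, applying it to $ba^{n_1}\cdots ba^{n_k}$ with $n_i\in\{m,m+1\}$ and cyclically reducing does yield a word $ab^{\delta_1}\cdots ab^{\delta_p}$ with all $\delta_j\in\{n,n+1\}$, and the abelianisation transforms by $(p,r)\mapsto(p,np+r)$ as your induction requires. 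Second, the ``main obstacle'' you flag --- that the exponents stay concentrated in two consecutive values and no third value arises under the substitution --- is the entire combinatorial content of the lemma, so a self-contained proof must carry out that bookkeeping explicitly rather than defer to it. With those two fixes the argument closes, and the uniqueness step via \cite[Corollary N4]{mks} is used correctly to conclude that every primitive falls under one of the listed forms up to conjugacy and inversion.
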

Given a set $X$ together the set of its formal inverses $X^{-1}$, and given two words $A, B\in (X\cup X^{-1})^*$, we write $A=B$ when $A$ and $B$ represent the same element of $F(X)$, and we write $A\equiv B$ when $A$ and $B$ are precisely the same word. We write $U=A\circ B$ to mean that $U\equiv AB$ and no free reduction is possible between $A$ and $B$. We often abbreviate ``freely reduced'' to ``reduced''.

A \emph{syllable} is any maximal single generator subword in a word on $\{a^{\pm 1}, b^{\pm 1}\}$. An $a$-syllable is a syllable of the form $a^n$ or $a^{-n}$ where $n>0$ (so we do not distinguish between positive and negative powers). If a word contains a syllable then that word is implicitly assumed to be non-empty. For example, the word $B_0$ in the lemma below is assumed to be non-empty, as are the words $W$ in Lemmas \ref{lem:form1}, \ref{lem:form2}, and \ref{lem:MOFP}.

\begin{lemma}
\label{lem:formofB}
Let $\phi:F(a, b)\rightarrow F(a, b)$ be a non-surjective monomorphism such that $\phi(a)=a$ and $\det(\Phi_{(a,b)})=\pm1$.
Suppose $\phi(y) \sim y$ for some word $y\neq a^{\pm1}$ which is:
\begin{enumerate}
\item not a proper power, and
\item of the form $a^{\delta_a}b^{\delta_b n_1}\cdots a^{\delta_a}b^{\delta_b n_k}$ or $b^{\delta_b} a^{\delta_a n_1}\cdots b^{\delta_b} a^{\delta_a n_k}$, where $\delta_a, \delta_b=\pm1$, $k\geq 1$, $n_i\in\{m, m+1\}$ with $m\geq1$.
\end{enumerate}

If $\phi(b)^{\delta_b}$ starts with a $b$-syllable, with $\delta_b$ as in (2),
then $\phi(b)^{\delta_b}$ has at least $4$ syllables and ends with an $a$-syllable. 
That is, $\phi(b)^{\delta_b}\equiv b^rB_0b^sa^t$ with $r, s, t\neq0$ and $B_0$ a reduced word starting and ending in $a$-syllables.
\end{lemma}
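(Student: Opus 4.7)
The plan is to argue by contradiction: assume that $\phi(b)^{\delta_b}$ ends in a $b$-syllable (in addition to starting with one, as given) and derive a contradiction via syllable counting in $\phi(y)$.

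First I would extract the abelianisation data. Applying the exponent-sum homomorphisms $\sigma_a$ and $\sigma_b$ to the identity $\phi(y)\sim y$, using $\phi(a)=a$ and noting that $\sigma_b(y)\neq 0$ in both forms of $y$, I obtain $\sigma_a(\phi(b))=0$ and $\sigma_b(\phi(b))=1$. Hence $C:=\phi(b)^{\delta_b}$ satisfies $\sigma_a(C)=0$ and $\sigma_b(C)=\delta_b$. Under the contradiction hypothesis, $C$ starts and ends with $b$-syllables. A single-syllable $C$ would give $C=b^{\pm 1}$, hence $\phi(b)=b^{\pm 1}$, making $\phi$ surjective---a contradiction. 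So I may write $C=b^{r}B_0b^{s}$ with $B_0\neq\emptyset$ starting and ending in $a$-syllables, and $\sigma_a(B_0)=0$ then forces $B_0$ to have at least two $a$-syllables (a single $a$-syllable has non-zero $a$-exponent).

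Next I would expand $\phi(y)$ in both forms of $y$ and verify it is cyclically reduced with strictly more syllables of one type than $y$ has. For form (1), $\phi(y)=a^{\delta_a}C^{n_1}\cdots a^{\delta_a}C^{n_k}$: each junction $a^{\delta_a}\cdot C^{n_i}$ and $C^{n_i}\cdot a^{\delta_a}$ (including cyclically) is $a$-to-$b$ or $b$-to-$a$, so cannot reduce; only internal reductions inside each $C^{n_i}$ can occur. I would then split on whether $s+r\neq 0$ or $s+r=0$: in the first case $C^{n_i}$ reduces to $b^{r}B_0 b^{s+r}B_0\cdots B_0 b^{s}$, containing $n_i$ untouched copies of $B_0$, each contributing $\geq 2$ $a$-syllables; in the second case $C^{n_i}$ reduces to $b^{r}B_0^{n_i}b^{-r}$, and after writing $B_0=g\tilde{B_0}g^{-1}$ with $\tilde{B_0}$ cyclically reduced, one has $B_0^{n_i}=g\tilde{B_0}^{n_i}g^{-1}$, which still starts and ends with $a$-syllables. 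Tallying then gives at least $k+2k=3k$ $a$-syllables in $\phi(y)$'s cyclic form against exactly $k$ in $y$'s, contradicting $\phi(y)\sim y$. Form (2) is symmetric, counting $b$-syllables in $\phi(y)=Ca^{\delta_a n_1}\cdots Ca^{\delta_a n_k}$: each of the $k$ copies of $C$ contributes $\geq 3$ $b$-syllables (the $b^{r}$, the $b^{s}$, and at least one inside $B_0$), giving $\geq 3k$ against $k$.

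This shows $C$ ends with an $a$-syllable. Since $C$ is reduced its syllables alternate between $a$- and $b$-type, so starting with $b$ and ending with $a$ forces an even number of syllables; the $2$-syllable case $C=b^{r}a^{t}$ is ruled out by $\sigma_a(C)=t\neq 0$. Hence $C$ has $\geq 4$ syllables, and writing $C=b^{r}B_0 b^{s}a^{t}$ with $B_0$ the interior block delivers the desired form with $B_0$ starting and ending in $a$-syllables. I expect the main obstacle to be the sub-case $s+r=0$ of the syllable count: here internal cancellation in $C^{n_i}$ could in principle collapse many syllables, and the cyclic-reduction decomposition $B_0=g\tilde{B_0}g^{-1}$ is the key device for guaranteeing that the boundary $a$-syllables of each $B_0^{n_i}$ survive, keeping the $a$-syllable count of $\phi(y)$ strictly above $k$.
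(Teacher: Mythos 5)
Your proof is correct, and it reaches the conclusion by a route that differs from the paper's in a few places worth noting. The paper's proof of this lemma does \emph{not} derive $\sigma_a(\phi(b)^{\delta_b})=0$; instead it records only $\sigma_b(B)=\pm1$ (from $\det\Phi_{(a,b)}=\pm1$) and $|B|>1$ (from non-surjectivity), writes $B=U^{-1}\circ B_1\circ U$ with $B_1$ the cyclic reduction of $B$, and compares \emph{total word length}: in each of the four resulting cases the cyclically reduced form $Y$ of $\phi(y)$ satisfies $|Y|>|y|$, which is the contradiction. The $2$-syllable case is then eliminated via $\sigma_b(B)=\pm1$ forcing $B=b^{\pm1}a^t$, i.e.\ $\phi$ surjective. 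You instead extract $\sigma_a(C)=0$ and $\sigma_b(C)=\delta_b$ from the abelianised fixed-vector equation, use a \emph{syllable} count ($a$-syllables for form (1), $b$-syllables for form (2)), and decompose $C=b^r\circ B_0\circ b^s$ with a case split on $r+s$, cyclically reducing only $B_0$ (as $g\tilde B_0 g^{-1}$) in the edge case $r+s=0$. Both are contradiction-by-counting arguments with cyclic reduction as the key device; the paper's version is a little leaner because cyclically reducing all of $B$ upfront handles every power $B^{n_i}$ uniformly with no case split, while your $\sigma_a(C)=0$ observation buys a very clean reason both for $B_0$ having at least two $a$-syllables and for excluding the final $2$-syllable possibility (any $b^ra^t$ would have $\sigma_a=t\neq0$). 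One small presentational point: when you say ``ruled out by $\sigma_a(C)=t\neq0$'' you of course mean that $\sigma_a(C)=t$ would be nonzero, contradicting the already-established $\sigma_a(C)=0$; phrasing it that way avoids the reader momentarily parsing it as an assumption.
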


\begin{proof}
Write $B:=\phi(b)^{\delta_b}$, with $B$ reduced, and note that by hypothesis $B$ starts with a $b$-syllable and $\sigma_b(B)=\pm1$ as $\det(\Phi_{(a,b)})=\pm1$. 

Suppose first that $B$ ends in a $b$-syllable and write $B=U^{-1} \circ B_1 \circ U$ for some reduced word $U$ of maximal length (so $B_1$ is cyclically reduced). Note that either (i) $U$ is non-empty and ends with a $b$-syllable, or (ii) $U$ is empty and $B$ is cyclically reduced. Note that $|B|>1$ as $\phi$ is non-surjective. Denote by $Y$ the free reduction of the word $\phi(y)$.
Then each of $y$ and $B$ can have two forms, and so we have four possibilities:
\begin{align*}
\text{(i)}~
Y&\equiv a^{\delta_a}U^{-1}B_1^{n_1}U\cdots a^{\delta_a}U^{-1}B_1^{n_k}U
&
\text{(ii)}~
Y&\equiv a^{\delta_a}B^{n_1}\cdots a^{\delta_a}B^{n_k}
\\
Y&\equiv U^{-1}B_1Ua^{\delta_an_1}\cdots U^{-1}B_1Ua^{\delta_an_k}
&
Y&\equiv Ba^{\delta_a n_1}\cdots Ba^{\delta_a n_k}
\end{align*}

In each case, the word $Y$ is cyclically reduced with $|Y|>|y|$ (as $|B|>1$). This is a contradiction as $\phi(y)=Y\sim y$. Hence $B$ ends in an $a$-syllable.

If $B$ has two syllables, then $B\equiv b^{r}a^t$ for some $r\in\mathbb{Z}$. As $\sigma_b(B)=\pm1$ we have $r=\pm1$, and hence $\phi\in\aut(F(a, b))$, which is a contradiction. Finally, $B$ cannot have three syllables, since it starts with a $b$-syllable and ends in an $a$-syllable. So $B$ has the required form.
\end{proof}

%%%%%%%%%%%%%%%%%%%%%%%%%%%%%%%%%%%%%%%%%%%%%%%%%%%%%%%%%%%%%%%%%%%%%%%%%%%%
%%%-------------------------------------------------------------------------------------------------------------------------------------------------------------------------------------------------%%%
%%%-------------------------------------------------------------------------------------------------------------------------------------------------------------------------------------------------%%%
%%%------------------------------------------------Form 1-----------------------------------------------------------------------------------------------------------------------------------------%%%
%%%-------------------------------------------------------------------------------------------------------------------------------------------------------------------------------------------------%%%
%%%-------------------------------------------------------------------------------------------------------------------------------------------------------------------------------------------------%%%
%%%%%%%%%%%%%%%%%%%%%%%%%%%%%%%%%%%%%%%%%%%%%%%%%%%%%%%%%%%%%%%%%%%%%%%%%%%%

Suppose that $\psi$ fixes $a$ and there is a second, up to inversion, maximal outer fixed point $\beta$. Lemma \ref{lem:primitives} says that there is an element $y\in\beta^{\pm1}$ which has one of two forms. We first suppose $y$ is of the form (\ref{primitives:1}) from Lemma \ref{lem:primitives}.

\begin{lemma}
\label{lem:form1}
Let $\psi:F(a, b)\rightarrow F(a, b)$ be a non-surjective monomorphism such that $\psi(a)=a$ and $\det(\Psi_{(a,b)})=\pm1$.
If $\psi(y) \sim y$ for $y$ non-empty, not $a^{\pm1}$, not a proper power, and of the form
\[
y=ab^{\epsilon n_1}\cdots ab^{\epsilon n_k}
\]
with $\epsilon=\pm1$, $k\geq 1$, and $n_i\in\{m, m+1\}$ for some $m\geq1$,
then $y=ab^{\epsilon}$
and $\psi(b)^{\epsilon}\equiv a^{-j}W^{-1}a^{p}b^{\epsilon}a^qWa^{j-1}$ for some integers $p, q, j\in\mathbb{Z}$ with $p+q=1$ and for some reduced word $W\in F(a, b)$ starting and ending in $b$-syllables.
\end{lemma}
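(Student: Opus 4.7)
The plan is two-fold: first show $y=ab^{\epsilon}$ (equivalently $k=n_1=1$), and second derive the specific form of $\psi(b)^{\epsilon}$ by a direct computation.

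\emph{Normalization and abelianization.} By replacing $b$ with $b^\epsilon$ in the basis we may assume $\epsilon=1$; this preserves all the hypotheses, including the form of $y$. Set $C:=\psi(b)$. Applying the abelianization map to $\psi(y)\sim y$ and using $\det(\Psi_{(a,b)})=\pm1$ forces $\sigma_a(C)=0$ and $\sigma_b(C)=1$; in particular $C\neq b$ since $\psi$ is not surjective, so $|C|\geq 3$ (no reduced word of length $\leq 2$ has these exponent sums).

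\emph{Identification $y=ab$.} Post-compose $\psi$ with conjugation by a suitable power of $a$ so that the new $C$ starts with a $b$-syllable; this preserves all hypotheses and the outer-fixed-point data. Lemma \ref{lem:formofB} (with $\delta_a=\delta_b=1$) then yields $C\equiv b^r B_0 b^s a^t$ with $r,s,t\neq 0$ and $B_0$ starting and ending in $a$-syllables. Writing $\psi(y)=aC^{n_1}\cdots aC^{n_k}$, this word is freely reduced and the only possible reductions occur at the $k$ interfaces (cyclic wrap included) of the form $\ldots a^t\cdot a\ldots$. A careful cyclic-syllable bookkeeping — tracking how the $k$ inserted $a$'s interact with the trailing $a^t$'s, and in the delicate $t=-1$ subcase how the $b^s\cdot b^r$ adjacencies cascade into further syllable merges governed by the signs and magnitudes of $r$ and $s$ — compares the cyclic syllable profile of $\psi(y)$ with that of $y$. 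Since $y$ has exactly $k$ $a$-syllables of length $1$, and since the restrictions $n_i\in\{m,m+1\}$ with $m\ge 1$ together with the non-proper-power condition on $y$ rule out every $(k,n_1,\dots,n_k)\neq(1,1)$, this forces $k=1$ and $n_1=1$, so $y=ab$.

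\emph{Form of $\psi(b)$.} With $y=ab$, the conjugacy $\psi(ab)=a\cdot\psi(b)\sim ab$ gives $\psi(b)\cdot a\sim ba$ cyclically, so there exists $h\in F(a,b)$ with $\psi(b)\cdot a = h^{-1}(ba)h$. Decompose $h$ canonically as $h=a^{j_1}W a^{j_2}$, where $W$ is either empty or a reduced word starting and ending in $b$-syllables. If $W=1$ then $\psi(b)=a^{-j_2}b a^{j_2}$ is a conjugate of $b$ by a power of $a$, so $\psi$ is an inner automorphism and hence surjective — a contradiction; therefore $W\neq 1$. Substituting and cancelling the trailing $a$ yields
\[
\psi(b)=a^{-j_2}W^{-1}a^{-j_1}b a^{j_1+1}Wa^{j_2-1}.
\]
Setting $j:=j_2$, $p:=-j_1$ and $q:=j_1+1$ gives $p+q=1$ and $\psi(b)=a^{-j}W^{-1}a^p b a^q W a^{j-1}$. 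Un-normalizing $b\leftrightarrow b^\epsilon$ then produces $\psi(b)^\epsilon\equiv a^{-j}W^{-1}a^p b^\epsilon a^q Wa^{j-1}$, as claimed.

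The main obstacle is the cyclic-syllable bookkeeping in the middle step: enumerating the possible interactions at the $k$ interfaces (the signs of $r,s,t$, the cascading cancellations in the $t=-1$ subcase between consecutive $b^s$- and $b^r$-syllables, and the syllable structure of $B_0$) and ruling out all $(k,n_1,\dots,n_k)$ patterns other than $(1,1)$. The constraint $n_i\in\{m,m+1\}$ coupled with the non-proper-power hypothesis is essential for excluding the borderline cases.
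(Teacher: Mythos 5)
The key step — establishing $y=ab^{\epsilon}$ — is not actually proved. You set it up correctly: normalize $\epsilon$, conjugate so $C:=\psi(b)^{\epsilon}$ begins with a $b$-syllable, invoke Lemma~\ref{lem:formofB} to write $C\equiv b^rB_0b^sa^t$, and observe that in $\psi(y)=aC^{n_1}\cdots aC^{n_k}$ the only possible reductions occur at the $a^t\cdot a$ interfaces. But then you declare that ``a careful cyclic-syllable bookkeeping \dots forces $k=1$ and $n_1=1$'' without carrying it out, and you acknowledge as much in your closing paragraph. This is precisely the heart of the lemma, and it cannot be waved through: the non-proper-power hypothesis and the constraint $n_i\in\{m,m+1\}$ do not \emph{by themselves} rule out, say, $k=2$ with $n_1=n_2=1$; one must actually compare lengths and syllable structure. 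The paper does this by writing $b^rB_0b^s\equiv V^{-1}\circ B_1\circ V$ with $B_1$ cyclically reduced and then splitting into two concrete cases. If $t\neq -1$, the word $U:=a^{-1}\psi(y)a\equiv C_1\circ\cdots\circ C_k$ (with $C_i$ the reduction of $C^{n_i}a$) is cyclically reduced of length $\geq 4\sum n_i$, which strictly exceeds $|y|=k+\sum n_i$, contradicting $\psi(y)\sim y$. If $t=-1$, the cancellations collapse $U$ to something conjugate to $B_1^{\,n}$ with $n=\sum n_i$, and since $y$ is not a proper power one must have $n=1$, hence $k=1$ and $n_1=1$. You need to supply an argument of this kind; ``cyclic-syllable bookkeeping'' is a description of the task, not a proof.

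Two smaller remarks. First, your derivation of the explicit form of $\psi(b)^{\epsilon}$ from $\psi(ab)\sim ab$ (decompose the conjugator as $h=a^{j_1}Wa^{j_2}$, rule out $W=1$ via surjectivity, read off $p,q,j$) is correct and noticeably cleaner than the paper's route through $V$ and $B_1$; that part is a genuine improvement, though there is a small slip — if $W=1$ then $\psi(b)=a^{-(j_1+j_2)}ba^{j_1+j_2}$, not $a^{-j_2}ba^{j_2}$ (this does not affect the conclusion). Second, the abelianization computation giving $\sigma_a(C)=0$ and $\sigma_b(C)=1$ follows from $\psi(y)\sim y$ with $\sigma_b(y)=\sum n_i\geq 1$; the determinant condition is consistent with this but is not what forces it, so the attribution in your prose is slightly off.
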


\begin{proof}
Let $y=ab^{\epsilon n_1}\cdots ab^{\epsilon n_k}$ be as in the statement of the lemma.
Now, $\psi(b)^{\epsilon}$ contains a $b$-syllable as $\psi$ is injective, so write $\psi(b)^{\epsilon}=a^{-j}Ba^j$ where $B$ is a reduced word starting with a $b$-syllable.
Let $\gamma\in\inn(F(a, b))$ correspond to conjugaction by $a^{-j}$. The result holds for $\psi$ if and only if the result holds for $\phi:=\psi\gamma$; we therefore consider $\phi$. Note that $\phi(b)^{\epsilon}=B$.
By taking $\delta_a:=1$ and $\delta_b:=\epsilon$ in Lemma \ref{lem:formofB}, we can write $B\equiv b^rB_0b^sa^t$ with $r, s, t\neq0$ as in Lemma \ref{lem:formofB}.
 
Now $\phi(y)=aB^{n_1}aB^{n_2}\cdots aB^{n_k}$. Since $|B|>1$, free reduction must happen within $B^{n_i}aB^{n_{i+1\pmod k}}$ for some $i$, as otherwise $\phi(y)$ is freely and cyclically reduced with $|\phi(y)|>|y|$, which contradicts $\phi(y) \sim y$ (this still holds when $k=1$, when considering $B^{n_1}aB^{n_1}$). Write $b^rB_0b^s\equiv V^{-1}\circ B_1\circ V$ for some (possibly empty) reduced word $V$ of maximal length, so $B\equiv V^{-1}B_1Va^t$.

Suppose $t\neq-1$, and write $C_i$ for the free reduction of $B^{n_i}a$. Then $|C_i|\geq4n_i$, and no free reduction happens between the $C_i$, so $U:=a^{-1}\phi(y)a$ has the form:
\[
U=B^{n_1}aB^{n_2}a\cdots B^{n_k}a\equiv C_1 \circ \cdots \circ C_n
\]
Therefore, $U$ is cyclically reduced, conjugate to $y$, and has length:
\[
|U|\geq 4\sum_{i=1}^k n_i\gneq k+\sum_{i=1}^k n_i=|y|
\]
Hence, $U\not\sim y$ a contradiction.

So $t=-1$. Then $U=V^{-1}B_1^{n_1}\cdots B_1^{n_k}V=V^{-1}B_1^nV$, where $n={\sum_{i=1}^kn_i}$. Hence, $\phi(y)$ and $y$ are conjugate to the freely and cyclically reduced word $B_1^n$. As $y$ is not a proper power, we must have $\sum_{i=1}^kn_i=\pm1$. Furthermore, since $n_i\geq1$ for all $i$, we have that $y=ab^{\epsilon}$, as required.

To obtain $\psi(b)^{\epsilon}$ we first continue to consider $\phi(b)^{\epsilon}$. Write $V = a^{p_0}W$ where $|p_0|$ is maximal, and so if $W$ is non-empty then it begins with a $b$-syllable. Indeed, if $W$ is non-empty then it also ends with a $b$-syllable as $B\equiv W^{-1}a^{-p_0}B_1a^{p_0}Wa^{-1}$ begins with a $b$-syllable.
Now, as
\[
\phi(y)=a\cdot W^{-1}a^{-p_0}B_1a^{p_0}Wa^{-1}\sim B_1
\]
is conjugate to $ab^{\epsilon}$, we have that either $B_1=ab^{\epsilon}$ or $B_1=b^{\epsilon}a$. If $B_1=ab^{\epsilon}$ then set $p:=-p_0+1$ and $q:=p_0$. Else, set $p:=-p_0$ and $q:=1+p_0$. In both cases we have that $\phi(b)^{\epsilon}=W^{-1}a^{p}b^{\epsilon}a^{q}Wa^{-1}$ where $p+q=1$. Finally, the word $W$ cannot be empty as $\phi$ is not an automorphism. Hence, from the definition of $\phi$ as $\psi\gamma$ for $\gamma\in\inn(F(a, b))$ conjugation by $a^j$, we have that $\psi(b)^{\epsilon}=a^{-j}\phi(b)^{\epsilon}a^j=a^{-j}W^{-1}a^{p}b^{\epsilon}a^{q}Wa^{j-1}$ as required.
\end{proof}

%%%%%%%%%%%%%%%%%%%%%%%%%%%%%%%%%%%%%%%%%%%%%%%%%%%%%%%%%%%%%%%%%%%%%%%%%%%%
%%%-------------------------------------------------------------------------------------------------------------------------------------------------------------------------------------------------%%%
%%%-------------------------------------------------------------------------------------------------------------------------------------------------------------------------------------------------%%%
%%%------------------------------------------------Form 2-----------------------------------------------------------------------------------------------------------------------------------------%%%
%%%-------------------------------------------------------------------------------------------------------------------------------------------------------------------------------------------------%%%
%%%-------------------------------------------------------------------------------------------------------------------------------------------------------------------------------------------------%%%
%%%%%%%%%%%%%%%%%%%%%%%%%%%%%%%%%%%%%%%%%%%%%%%%%%%%%%%%%%%%%%%%%%%%%%%%%%%%

We now suppose that the element $y\in\beta^{\pm1}$ is of the form (\ref{primitives:2}) from Lemma \ref{lem:primitives}.

\begin{lemma}
\label{lem:form2}
Let $\psi:F(a, b)\rightarrow F(a, b)$ be a non-surjective monomorphism such that $\psi(a)=a$ and $\det(\Psi_{(a,b)})=\pm1$.
If $\psi(y) \sim y$ for $y$ non-empty, not $a^{\pm1}$, not a proper power, and of the form
$$y= ba^{\epsilon n_1}\cdots ba^{\epsilon n_k}$$ with $\epsilon=\pm1$, $k\geq 1$ and $n_i\in\{m, m+1\}$ for some $m\geq1$,
then $y=ba^{\epsilon n_1}$ and $\psi(b)\equiv a^{-j}W^{-1}a^{p}ba^qWa^{j-\epsilon n_1}$ for some integers $p, q, j\in\mathbb{Z}$ with $p+q=\epsilon n_1$ and for some reduced word $W\in F(a, b)$ starting and ending in $b$-syllables.
\end{lemma}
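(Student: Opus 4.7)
The plan is to mirror the proof of Lemma~\ref{lem:form1}, adapted for primitive words of form~(\ref{primitives:2}) in Lemma~\ref{lem:primitives}. First, since $\psi$ is injective and $\psi(a)=a$, the word $\psi(b)$ must contain a $b$-syllable; write $\psi(b)\equiv a^{-j}C$ with $C$ starting in a $b$-syllable, let $\gamma\in\inn(F(a,b))$ denote conjugation by $a^j$, and set $\phi:=\psi\gamma$, so that $\phi(a)=a$ and $\phi(b)$ starts with a $b$-syllable. Applying Lemma~\ref{lem:formofB} with $\delta_a=\epsilon$ and $\delta_b=1$ gives $\phi(b)\equiv b^rB_0b^sa^t$ with $r,s,t\neq 0$ and $B_0$ reduced, starting and ending in $a$-syllables. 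Write $b^rB_0b^s\equiv V^{-1}\circ B_1\circ V$ with $V$ of maximal length and $B_1$ cyclically reduced, so $\phi(b)\equiv V^{-1}B_1Va^t$.

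The central step is to prove that $t+\epsilon n_i=0$ for every $i$, forcing all the $n_i$ to coincide with $n:=-\epsilon t$. Let $D_i:=b^rB_0b^sa^{t+\epsilon n_i}$, so $\phi(y)=D_1D_2\cdots D_k$. The only possible free reduction between consecutive $D_i$ and $D_{i+1}$ occurs when $t+\epsilon n_i=0$, whereupon the terminal $b^s$ of $D_i$ and the initial $b^r$ of $D_{i+1}$ meet and merge to $b^{s+r}$ (or cancel entirely if $s+r=0$). If no junction reduces, then $\phi(y)$ is freely and cyclically reduced (its first letter is $b^{\pm1}$ and its last is $a^{\pm1}$) of length $k|\phi(b)|+\sum_{i}n_i\geq 4k+\sum_{i}n_i>|y|$, contradicting $\phi(y)\sim y$. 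Otherwise, suppose some but not all junctions reduce. The cyclic word of $y$ consists of exactly $k$ $b$-syllables, all equal to $b$, so every $b$-syllable of $\phi(y)$ must also have exponent $+1$. The unmerged syllables $b^r,b^s$ from the non-reducing junctions then force $r=s=1$. If $s+r\neq 0$ the merged syllables $b^{s+r}$ from the reducing junctions demand $s+r=1$, contradicting $s+r=2$; and if $s+r=0$ then $r$ and $s$ must have opposite signs, again contradicting $r=s=1$. Hence every junction reduces and all $n_i$ coincide.

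With all $n_i=n$, we obtain $\phi(y)=(b^rB_0b^s)^k=V^{-1}B_1^kV$, so $\phi(y)\sim B_1^k\sim y=(ba^{\epsilon n})^k$. Since $y$ is not a proper power, $k=1$, giving $y=ba^{\epsilon n_1}$, and $B_1$ is a cyclic shift of $ba^{\epsilon n_1}$, namely $B_1\equiv a^{p'}ba^{q'}$ with $p'+q'=\epsilon n_1$. Write $V\equiv a^{p_0}W$ with $|p_0|$ maximal; by maximality, $W$ (if non-empty) begins with a $b$-syllable, and since $\phi(b)\equiv W^{-1}a^{p'-p_0}ba^{q'+p_0}Wa^{-\epsilon n_1}$ starts with a $b$-syllable, $W$ must also end in a $b$-syllable. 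Setting $p:=p'-p_0$ and $q:=q'+p_0$ preserves $p+q=\epsilon n_1$, and unwinding $\phi=\psi\gamma$ yields $\psi(b)\equiv a^{-j}W^{-1}a^pba^qWa^{j-\epsilon n_1}$ as claimed. Finally, $W$ cannot be empty, for otherwise $\phi(b)$ would reduce to $a^pba^{-p}$, a conjugate of $b$, making $\phi$ (and hence $\psi$) an automorphism, contradicting non-surjectivity.

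The main obstacle is the middle paragraph: because $n_i$ varies over the two-element set $\{m,m+1\}$, one must rule out the mixed scenario in which only some junctions reduce, and the constraint that every $b$-syllable of $\phi(y)$ has exponent $+1$ is what yields a clean contradiction.
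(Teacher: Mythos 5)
Your overall architecture matches the paper's: pass to $\phi$ so that $\phi(b)$ starts with a $b$-syllable, invoke Lemma~\ref{lem:formofB} to get $\phi(b)\equiv b^rB_0b^sa^t$, write $b^rB_0b^s\equiv V^{-1}B_1V$, force $k=1$, then unwind to the stated form of $\psi(b)$. The extraction of the form $\psi(b)\equiv a^{-j}W^{-1}a^pba^qWa^{j-\epsilon n_1}$ and the non-emptiness of $W$ are correct and essentially identical to the paper's.

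Where you genuinely diverge from the paper is in ruling out $k>1$. The paper splits according to the shape of the freely reduced word $Y\equiv V^{-1}B_1^{r_1}Va^{\epsilon_0}\cdots V^{-1}B_1^{r_i}Va^{\delta}$, argues that in the ``mixed'' case $\overline{Y}$ contains a maximal $a^{\pm1}$-syllable (so $m=1$) and that $V$ must be empty, and finishes with a length count $|\overline{Y}|\geq 3k\gneq|y|$. You instead observe that the $b$-syllables at a non-reducing junction are separate copies of $b^r$ and $b^s$, so since $y$ has all $b$-syllables equal to $b$ you get $r=s=1$ (hence $V$ empty automatically, as $rs>0$), while a reducing junction then produces a $b^{r+s}=b^2$-syllable; the two are incompatible. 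This is a shorter and more elementary route to the same contradiction, avoiding the detour through $m=1$ and the final length estimate. It does require, and you should say so explicitly, that the relevant $b$-syllables persist in the cyclic reduction $\overline{\phi(y)}$ (you write ``$\phi(y)$'' but mean its cyclic reduction); this is true because the junction subwords sit in the interior of $Y$, but it is worth a sentence.

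One genuine error: in the ``no junction reduces'' case you claim $|\phi(y)|=k|\phi(b)|+\sum n_i$, but this forgets that $a^t$ absorbs $a^{\epsilon n_i}$: the correct count is $k(|\phi(b)|-|t|)+\sum|t+\epsilon n_i|$, and this need not exceed $|y|=k+\sum n_i$ (for instance $\phi(b)=b^2ab^{-1}a^{-3}$ with $n_i\in\{5,6\}$ gives equality). The paper's parallel claim that $Y$ is ``longer than $y$'' has the same defect. The fix, consistent with the spirit of your middle paragraph, is to count $b$-syllables instead: when no junction reduces, $\phi(y)$ is cyclically reduced and contains at least $2k$ maximal $b$-syllables (a $b^r$ and a $b^s$ from each block), whereas $y$ has exactly $k$, so they cannot be cyclic permutations of one another. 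If you replace the length count with this syllable count, the argument is sound.
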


\begin{proof}
Since $\psi$ is injective, $\psi(b)$ contains a $b$-syllable, so write $\psi(b)=a^{-j}Ba^j$ where $B$ is a reduced word starting with a $b$-syllable and $j \in \mathbb{Z}$.
Let $\gamma\in\inn(F(a, b))$ correspond to conjugation by $a^{-j}$. The result holds for $\psi$ if and only if the result holds for $\phi:=\psi\gamma$; we therefore consider $\phi$. Note that $\phi(b)=B$.
By taking $\delta_a:=\epsilon$ and $\delta_b:=1$ in Lemma \ref{lem:formofB}, we can write $B\equiv b^rB_0b^sa^t$ with $r, s, t\neq0$ as in Lemma \ref{lem:formofB}, and furthermore write $b^rB_0b^s\equiv V^{-1}\circ B_1\circ V$ for some (possibly empty) reduced word $V$ of maximal length, so $B\equiv V^{-1}B_1Va^t$.

Let $y=ba^{\epsilon n_1}\cdots ba^{\epsilon n_k}$ be as in the statement of the lemma. We show now that $y=ba^{\epsilon n_1}$, that is, $k=1$. Assume $k>1$. Then:
\begin{align}
\phi(y)=V^{-1}B_1Va^{t+\epsilon n_1}V^{-1}B_1Va^{t+ \epsilon n_2}\cdots V^{-1}B_1Va^{t+ \epsilon n_k}.\label{eqn:1}
\end{align}
If $t\not\in\{-\epsilon m, -\epsilon (m+1)\}$ then the above word is both freely and cyclically reduced, and longer than $y$, which contradicts $\phi(y) \sim y$. Therefore, $t\in\{-\epsilon m, -\epsilon (m+1)\}$, and writing $Y$ for the freely reduced word representing $\phi(y)$ gives:
\[
Y\equiv V^{-1}B_1^{r_1}Va^{\epsilon_0}V^{-1}B_1^{r_2}Va^{\epsilon_0}\cdots a^{\epsilon_0}V^{-1}B_1^{r_i}Va^{\delta},
\]
where $\epsilon_0=\pm1$, $\delta\in\{0, \epsilon_0\}$, and $r_j>0$ for each $j\in\{1, \ldots, i\}$. Write $\overline{Y}$ for the cyclic reduction of $Y$; then $Y\neq \overline{Y}$ only if $\delta=0$ and $V$ is non-empty, whence $\overline{Y}=VYV^{-1}$.
If $i=1$ and $\delta=0$ then, as $y$ is not a proper power, we have $\phi(y)=Ba^{l}$ for some integer $l$. As $\phi$ is injective, $y=ba^l$ and so $k=1$ a contradiction.
If $i=1$ and $\delta=\epsilon_0$ or if $i>0$ then we claim that the cyclically reduced word $\overline{Y}$ contains an $a^{\pm1}$ which is a maximal $a$-syllable (so not part of an $a^{\pm2}$), and if $V$ is non-empty then $\overline{Y}$ contains both $b$-terms and $b^{-1}$-terms:
If $i=1$ and $\delta=\epsilon_0$ then $\overline{Y}=Y$, and the claim holds as $b^rB_0b^s\equiv V^{-1}\circ B_1\circ V$ begins and ends in $b$-terms.
If $i>1$ then the word $\overline{Y}$ still contains $U:=B_1^{r_1}Va^{\epsilon_0}V^{-1}B_1^{r_2}$ as a subword, and the claim follows as $B_1V$ ends in a $b$-term.
Since $y$ and $\overline{Y}$ are equal up to cyclic permutation, and as $\overline{Y}$ contains an $a^{\pm1}$ which is a maximal $a$-syllable, we have $m=1$.
Similarly, $V$ must be the empty word: $y$ and $\overline{Y}$ are equal up to cyclic permutation, and $y$ contains either $b$-terms or $b^{-1}$-terms (not both), while if $V$ is non-empty then $\overline{Y}$ contains both $b$-terms and $b^{-1}$-terms.
Therefore, $B_1\equiv b^rB_0b^s$ and so $|B_1|\geq3$. 

Now, $\overline{Y}$ satisfies (\ref{eqn:2}), below, since $i\geq1$, $\sum r_{\lambda}=k$ (follows from (\ref{eqn:1})), and because $3k\gneq |y|$ (holds as $m=1$ so $|ba^{\epsilon(m+1)}|=3$, and as primitives are not proper powers so $y\neq (ba^{\epsilon(m+1)})^k$, and so $|y|\lneq k|ba^{\epsilon(m+1)}|=3k$).
\begin{align}\label{eqn:2}
|\overline{Y}|&\geq 3\sum_{{\lambda}=1}^i{r_{\lambda}}+(i-1) =3k+(i-1) \geq 3k \gneq |y|.
\end{align}
Hence, $\overline{Y}$ and $y$ are not conjugate, and so neither are $\phi(y)$ and $y$, a contradiction. Therefore, $k=1$ and so $y=ba^{\epsilon n_1}$ as required.

To obtain $\psi(b)$ we first continue to consider $\phi(b)$.
Since $b^rB_0b^s\equiv V^{-1}B_1V$, $V$ cannot be an $a$-syllable. If $V$ is non-empty then it decomposes as $a^{m}W$, where $|m|$ is maximal and $W$ is non-empty and begins with a $b$-syllable. Indeed, $W$ also ends with a $b$-syllable as $B\equiv W^{-1}a^{-m}B_1a^{m}Wa^{t}$ begins with a $b$-syllable.
Now, the word
\[
\phi(ba^{\epsilon n_1})=W^{-1}a^{-m}B_1a^{m}Wa^{t+\epsilon n_1}\equiv b^rB_0b^sa^{t+\epsilon n_1}
\]
is reduced and contains at least two $b$-syllables.
Therefore, if ${t+\epsilon n_1}\neq0$ then $\phi(ba^{\epsilon n_1})$ is cyclically reduced and so each of its cyclic shifts contain at least two $b$-syllables. This is a contradiction as $ba^{\epsilon n_1}$ is a cyclic shift of $\phi(ba^{\epsilon n_1})$. Hence, $t=-\epsilon n_1$. Then as $ba^{\epsilon n_1}$ is a cyclic shift of $\phi(ba^{\epsilon n_1})=W^{-1}a^{-m}B_1a^{m}W$, and as $B_1$ is cyclically reduced we have that $B_1=a^{p_0}ba^{q_0}$ where $p_0+q_0=\epsilon n_1$. By taking $p:=p_0-m$ and $q:=q_0+m$, we see that $\phi(b)\equiv W^{-1}a^{p}ba^qWa^{-\epsilon n_1}$ for some integers $p, q\in\mathbb{Z}$ with $p+q=\epsilon n_1$.
Finally, $W$ cannot be empty as $\phi$ is not an automorphism.
Hence, from the definition of $\phi$ as $\psi\gamma$ for $\gamma\in\inn(F(a, b))$ conjugation by $a^j$, we have that $\psi(b)=a^{-j}\phi(b)a^j=a^{-j}W^{-1}a^{p}ba^{q}Wa^{j-\epsilon n_1}$ as required.
\end{proof}

%%%%%%%%%%%%%%%%%%%%%%%%%%%%%%%%%%%%%%%%%%%%%%%%%%%%%%%%%%%%%%%%%%%%%%%%%%%%
%%%-------------------------------------------------------------------------------------------------------------------------------------------------------------------------------------------------%%%
%%%-------------------------------------------------------------------------------------------------------------------------------------------------------------------------------------------------%%%
%%%------------------------------------------------Main Theorem of Section 3-----------------------------------------------------------------------------------------------------------------%%%
%%%-------------------------------------------------------------------------------------------------------------------------------------------------------------------------------------------------%%%
%%%-------------------------------------------------------------------------------------------------------------------------------------------------------------------------------------------------%%%
%%%%%%%%%%%%%%%%%%%%%%%%%%%%%%%%%%%%%%%%%%%%%%%%%%%%%%%%%%%%%%%%%%%%%%%%%%%%
We finally consider the case, not included in the previous two lemmas, when $y=b$. If $\psi(b)\sim b^{\pm1}$ then clearly there exists some $U\in F(a, b)$ such that $Ub^{\pm1}U^{-1}$; additionally, $U\neq a^{\pm k}$ is required because $\psi$ is not surjective.

\begin{lemma}\label{lem:y=b}
Let $\psi:F(a, b)\rightarrow F(a, b)$ be a non-surjective monomorphism such that $\psi(a)=a$ and $\det(\Psi_{(a,b)})=\pm1$. If $\psi(b)\sim b$, then $\psi(b)\equiv U^{-1}bU$, where $U\neq a^{\pm k}$, $k\in \mathbb{Z}$, is a reduced word ending in an $a$-syllable.
\end{lemma}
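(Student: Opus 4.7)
The plan is to extract $U$ from the conjugacy $\psi(b)\sim b$ by a standard normalisation, then verify in turn the three claimed properties of $U$: the literal equality $\psi(b)\equiv U^{-1}bU$, the non-power condition $U\neq a^{\pm k}$, and that $U$ ends in an $a$-syllable.

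First, since $\psi(b)\sim b$, there exists $V\in F(a,b)$ with $\psi(b)=V^{-1}bV$. Writing $V = b^kV'$ so that $V'$ does not begin with $b^{\pm1}$, the identity becomes $\psi(b)=V'^{-1}bV'$; by this choice the concatenation $V'^{-1}\cdot b\cdot V'$ has no cancellation at either junction, and so $\psi(b)\equiv V'^{-1}bV'$. Set $U:=V'$. If $U$ were empty then $\psi$ would fix both $a$ and $b$ and so be the identity, contradicting non-surjectivity; hence $U$ is non-empty and begins with an $a$-syllable. For the second property, if $U=a^k$ then $\psi(a)=a=a^{-k}aa^k$ and $\psi(b)=a^{-k}ba^k$, so $\psi$ is the inner automorphism induced by conjugation by $a^k$, which is surjective, contradicting the hypothesis; hence $U\neq a^{\pm k}$.

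The substantive claim is the third one. The plan is to argue by contradiction: suppose $U\equiv Wb^{\epsilon\ell}$ with $\ell\geq 1$, $\epsilon=\pm 1$, and $W$ reduced, non-empty, starting and ending in $a$-syllables (non-empty since $U$ begins with an $a$-syllable). Then $\psi(b)= b^{-\epsilon\ell}W^{-1}bWb^{\epsilon\ell}$, and the goal is to convert the trailing $b^{\epsilon\ell}$ together with the non-surjectivity hypothesis into a contradiction. I expect this to be the main obstacle. A natural route is to mimic the syllable-counting strategy used in the proofs of Lemmas \ref{lem:form1} and \ref{lem:form2}: compute $\psi^n(b)$ iteratively and track how its cyclic reduction evolves, leveraging the fact that the innermost occurrence of $b$ is flanked by the cyclically reduced block $W^{-1}bW$, to force a length or structural obstruction. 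An alternative route is to work directly with the image $\langle a,\psi(b)\rangle$ and exploit the trailing $b^{\epsilon\ell}$ to build, via Nielsen transformations, a free basis of $F(a,b)$ from $\{a,\psi(b)\}$, thus placing $\psi$ in $\aut(F(a,b))$ and contradicting non-surjectivity. Either route relies crucially on the combinatorial control provided by $W$ beginning and ending in $a$-syllables, which is what makes the trailing $b^{\epsilon\ell}$ detectable in subsequent reductions.
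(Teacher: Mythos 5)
Your treatment of the first two properties is correct and essentially matches the paper, which gives no formal proof of this lemma but only the cursory remark preceding it: you extract $U$ by normalising so that $U^{-1}bU$ is reduced as written, deduce that $U$ is non-empty, and observe that $U\notin\langle a\rangle$ since otherwise $\psi$ would be conjugation by $a^k$ and hence surjective.

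The third claim, that $U$ ends in an $a$-syllable, is not proved in your proposal (you only sketch two possible strategies), and that gap cannot be filled because the claim as stated is false. Consider $\psi(a)=a$ and $\psi(b)=(ab^2)^{-1}b(ab^2)=b^{-2}a^{-1}bab^2$. The pair $\{a,\,b^{-2}a^{-1}bab^2\}$ is Nielsen reduced (the second word begins and ends in $b$-syllables of length $2$, so no length-reducing move with $a^{\pm1}$ exists), hence it freely generates a rank-$2$ proper subgroup of $F(a,b)$ not containing $b$. Thus $\psi$ is an injective, non-surjective endomorphism with $\psi(a)=a$, $\det(\Psi_{(a,b)})=1$, and $\psi(b)\sim b$, satisfying every hypothesis of the lemma; yet the unique $U$ with $\psi(b)\equiv U^{-1}bU$ reduced is $U=ab^2$, which ends in a $b$-syllable. (Note the determinant hypothesis is automatic here: $\psi(a)=a$ and $\psi(b)\sim b$ already force $\Psi_{(a,b)}=I$.) Neither of your two proposed routes can succeed in establishing a false statement. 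What is actually true, and what is used downstream (Example~\ref{ex:4MOFP} records only that ``$U$ contains $b$-syllables''), is the weaker fact that $U$ starts with an $a$-syllable and $U\notin\langle a\rangle$, hence $U$ contains at least one $b$-syllable; your normalisation already gives this. So rather than pursuing the ``ending'' claim, you should flag the lemma, and its restatement in Lemma~\ref{lem:MOFP}~\ref{MOFP:3}, as requiring a small correction.
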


The previous three lemmas can be summarised in the following lemma.
\begin{lemma}\label{lem:MOFP}
Let $\psi:F(a, b)\rightarrow F(a, b)$ be a non-surjective monomorphism such that $\psi(a)=a$ and $\det(\Psi_{(a,b)})=\pm1$.
A maximal outer fixed point $\beta\neq[a^{\pm1}]$ exists if and only if 
 there exist some reduced word $W$ starting and ending in $b$-syllables, and $p, q, j, \epsilon, n\in\mathbb{Z}$, $\epsilon=\pm1$, $n\geq1$, such that either:
\begin{enumerate}[label=(\roman*)]
\item\label{MOFP:1} $p+q=1$ and $\psi(b)^{\epsilon}\equiv a^{-j}W^{-1}a^{p}b^{\epsilon}a^qWa^{j-1}$, or
\item\label{MOFP:2} $p+q=\epsilon n$ and $\psi(b)\equiv a^{-j}W^{-1}a^{p}ba^qWa^{j-\epsilon n}$,
\end{enumerate}
or there exists a word $U\neq a^{\pm k}$, $k\in \mathbb{Z}$, ending in an $a$-syllable such that
\begin{enumerate}[resume, label=(\roman*)]%resumes the previous list counter. Uses the ``enumitem'' package.
\item\label{MOFP:3} $\psi(b)\equiv U^{-1}bU$.
\end{enumerate}
If such a $\beta\neq[a^{\pm1}]$ exists then either $ab^{\epsilon}\in\beta\cup\beta^{-1}$ in Case \ref{MOFP:1}, or $ba^{\epsilon n}\in\beta\cup\beta^{-1}$ in Case \ref{MOFP:2}, or $b\in\beta\cup\beta^{-1}$ in Case \ref{MOFP:3}.
\end{lemma}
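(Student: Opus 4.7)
The plan is to prove both implications. For the forward direction, essentially all the work has already been done in Lemmas \ref{lem:form1}, \ref{lem:form2}, and \ref{lem:y=b}; the backward direction is a direct verification.

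For the forward direction, suppose $\beta \neq [a^{\pm1}]$ is a maximal outer fixed point of $\psi$. By Lemma \ref{lem:maxouter}, any $y \in \beta$ is a primitive element of $F(a,b)$, and up to replacing $y$ by a conjugate or inverse (which only changes $\beta$ to $\beta^{-1}$, preserving the hypothesis), Lemma \ref{lem:primitives} tells us that $y \in \{a,b\}$ or $y$ has one of the two cyclically reduced forms listed there. Since $\beta \neq [a^{\pm1}]$, we have $y \neq a^{\pm1}$. If $y = b^{\pm1}$ then applying $\psi$ to $y$ and $y^{-1}$ gives $\psi(b) \sim b$, so Lemma \ref{lem:y=b} applies and we obtain Case \ref{MOFP:3}; note that $b \in \beta \cup \beta^{-1}$. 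Otherwise $y$ has one of the two forms in Lemma \ref{lem:primitives}. If $y$ has the first form then Lemma \ref{lem:form1} gives Case \ref{MOFP:1} with $ab^{\epsilon} \in \beta \cup \beta^{-1}$; if $y$ has the second form then Lemma \ref{lem:form2} gives Case \ref{MOFP:2} with $ba^{\epsilon n} \in \beta \cup \beta^{-1}$.

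For the backward direction, I would verify in each case that the indicated element $w$ is a maximal outer fixed element of $\psi$ whose conjugacy class is not $[a^{\pm1}]$. In Case \ref{MOFP:1}, take $w = ab^{\epsilon}$ and compute
\[
\psi(w) = a \cdot a^{-j}W^{-1}a^{p}b^{\epsilon}a^{q}Wa^{j-1} = a^{1-j}W^{-1}a^{p}b^{\epsilon}a^{q}Wa^{j-1},
\]
which after conjugation by $a^{j-1}W$ becomes $a^{p}b^{\epsilon}a^{q}$, and since $p+q=1$ this is cyclically equal to $a^{p+q}b^{\epsilon} = ab^{\epsilon} = w$. In Case \ref{MOFP:2}, take $w = ba^{\epsilon n}$ and compute
\[
\psi(w) = a^{-j}W^{-1}a^{p}ba^{q}Wa^{j-\epsilon n} \cdot a^{\epsilon n} = a^{-j}W^{-1}a^{p}ba^{q}Wa^{j},
\]
which conjugates to $a^{p}ba^{q}$ and then cyclically to $a^{p+q}b = a^{\epsilon n}b$, conjugate to $w$. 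In Case \ref{MOFP:3}, $\psi(b) \equiv U^{-1}bU$ is obviously conjugate to $b$. In all three cases, Lemma \ref{lem:primitives} shows the specified element is primitive (so not a proper power, hence the outer fixed point is maximal), and each of $ab^{\epsilon}$, $ba^{\epsilon n}$, $b$ has non-zero $b$-exponent sum, so none is conjugate to $a^{\pm1}$.

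The proof is essentially a bookkeeping argument: the substantive content lies in the classification provided by Lemmas \ref{lem:form1}, \ref{lem:form2}, and \ref{lem:y=b}, together with the Cohen--Metzler--Zimmerman classification of primitive elements (Lemma \ref{lem:primitives}). I expect no genuine obstacle, but the main care needed is to keep straight that $y$ may need to be replaced by a conjugate or inverse in order to match the normal form of Lemma \ref{lem:primitives} — this is why the final sentence of the lemma is phrased with $\beta \cup \beta^{-1}$ rather than just $\beta$.
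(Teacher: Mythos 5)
Your proof is correct and follows the same route as the paper: apply Lemma \ref{lem:maxouter} and the Cohen--Metzler--Zimmerman classification (Lemma \ref{lem:primitives}) to reduce the forward direction to Lemmas \ref{lem:y=b}, \ref{lem:form1} and \ref{lem:form2}, and verify the three cases directly for the converse. The only slip is cosmetic: in Case \ref{MOFP:1} the conjugator taking $\psi(ab^{\epsilon})=a^{1-j}W^{-1}a^pb^{\epsilon}a^qWa^{j-1}$ to $a^pb^{\epsilon}a^q$ is $Wa^{j-1}$ (or its inverse, depending on convention), not $a^{j-1}W$.
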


\begin{proof}
Suppose $\beta\neq[a^{\pm1}]$ is a maximal outer fixed point. Then either $\beta = [b^{\pm 1}]$, in which case Lemma \ref{lem:y=b} applies, or otherwise by Lemma \ref{lem:maxouter}, $\beta$ is the conjugacy class of a primitive element and so there exists some $y\in\beta\cup\beta^{-1}$ of either form (\ref{primitives:1}) or (\ref{primitives:2}) from Lemma \ref{lem:primitives}. For form (\ref{primitives:1}), apply Lemma \ref{lem:form1} and Case \ref{MOFP:1} of the current lemma follows.
For form (\ref{primitives:2}), apply Lemma \ref{lem:form2} and Case \ref{MOFP:2} of the current lemma follows.

On the other hand, suppose Case \ref{MOFP:1} holds. Then
\[
\psi(ab^{\epsilon})=a\cdot a^{-j}W^{-1}a^{p}b^{\epsilon}a^qWa^{j-1}\sim a^{p+q}b^{\epsilon}=ab^{\epsilon},
\]
so $[ab^{\epsilon}]$ is a maximal outer fixed point such that $[ab^{\epsilon}]\neq[a^{\pm1}]$, as required.
Next, suppose Case \ref{MOFP:2} holds. Then
\[
\psi(ba^{\epsilon n})=a^{-j}W^{-1}a^{p}ba^qWa^{j-\epsilon n}\cdot a^{\epsilon n} \sim ba^{p+q}=ba^{\epsilon n},
\]
so $[ba^{\epsilon n}]$ is a maximal outer fixed point such that $[ba^{\epsilon n}]\neq[a^{\pm1}]$.
Finally, if Case \ref{MOFP:3} holds then $[b]$ is a maximal outer fixed point such that $[b]\neq[a^{\pm1}]$, as required.
\end{proof}

We are now ready for the main result of this section, Theorem \ref{thm:OuterFPClassification}. Part \ref{item:algoOFPxtoy} of Theorem \ref{thm:OuterFPClassification} is algorithmic. Recall that if we are given an endomorphism $\psi: F(a, b)\rightarrow F(a, b)$ then we are either given both $\psi(a)$ and $\psi(b)$ explicitly in the input, or these words can be pre-computed.

\begin{theorem}
\label{thm:OuterFPClassification}
Let $\psi:F(a, b)\rightarrow F(a, b)$ be a non-surjective monomorphism such that $\det(\Psi_{(a, b)})=\pm1$.
Then, up to inversion, there are at most two maximal outer fixed points of $\psi$.
Moreover, if $\alpha$ and $\beta$ are two maximal outer fixed points of $\psi$ with $\alpha\neq\beta^{\pm1}$ then the following hold:
\begin{enumerate}[label=(\roman*)]
\item\label{item:max2OFP1}
for every representative $x\in \alpha$ there exists a representative $y\in \beta$ such that $x$ and $y$ form a free basis of $F(a, b)$, 
\end{enumerate}
and the following algorithmic analogue also holds
\begin{enumerate}[label=(\roman*), resume]
\item\label{item:algoOFPxtoy}
given a representative $x\in\alpha$, a representative $y\in\beta$ can be computed such that $x$ and $y$ form a free basis of $F(a, b)$.
\end{enumerate}
\end{theorem}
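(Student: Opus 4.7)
The plan is to normalize via a change-of-basis so that one of the outer fixed points becomes $[a]$, then leverage Lemma \ref{lem:MOFP} to classify any second maximal outer fixed point, and finally prove uniqueness via a double-normalization argument.

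\emph{Reduction.} Given a maximal outer fixed point $\alpha$ and any $x \in \alpha$, Lemma \ref{lem:maxouter} guarantees $x$ is primitive, so it extends to a free basis $(x, z)$ of $F(a, b)$ (effectively, e.g.\ via Nielsen/Whitehead methods or Stallings foldings). Let $\rho \in \aut(F(a, b))$ be the automorphism with $\rho(a) = x$ and $\rho(b) = z$, and set $\phi := \iota_g \circ \rho^{-1} \psi \rho$, where $\iota_g \in \inn(F(a,b))$ is chosen so that $\phi(a) = a$; such $\iota_g$ exists because $\psi(x) \sim x$ forces $\rho^{-1}\psi\rho(a) \sim a$. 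The map $\phi$ remains a non-surjective monomorphism with $\det(\Phi_{(a,b)}) = \pm1$, and its (maximal) outer fixed points correspond bijectively, via $\rho$, to those of $\psi$.

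\emph{Classification and basis property.} Assume now that $\phi(a) = a$. Any second maximal outer fixed point $\beta \neq [a^{\pm1}]$ of $\phi$ contains, by Lemma \ref{lem:MOFP}, a representative $y_0 \in \beta \cup \beta^{-1}$ of one of the forms $ab^{\pm1}$, $ba^{\pm n}$ (for some $n\geq1$), or $b$. In each case $b \in \langle a, y_0\rangle$ by direct substitution, so $(a, y_0)$ is a free basis of $F(a, b)$. Pulling back through $\rho$, and inverting $y_0$ if necessary so that it lands in $\beta$ rather than $\beta^{-1}$, produces $y \in \beta$ with $(x, y) = (\rho(a), \rho(y_0))$ a free basis of $F(a, b)$; this proves (i). The algorithmic statement (ii) follows because every step---extending $x$ to a basis, building $\rho$ and $\phi$, identifying $y_0$ from the classification in Lemma \ref{lem:MOFP} (a finite case analysis of the normal form of $\phi(b)$, with candidate parameters $\epsilon, n, j, p, q, W, U$ bounded by $|\phi(b)|$), and pulling back---is effective.

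\emph{Uniqueness up to inversion.} It remains to show that $\phi$ has at most one maximal outer fixed point $\beta \neq [a^{\pm1}]$, up to inversion. Suppose for contradiction that $\beta, \beta'$ are two such, with $\beta' \neq \beta^{\pm1}$. Lemma \ref{lem:MOFP} applied to each imposes a specific decomposition on $\phi(b)$ and supplies canonical representatives $y_0 \in \beta \cup \beta^{-1}$ and $y_0' \in \beta' \cup \beta'^{-1}$ from the short list $\{ab^{\pm1}, ba^{\pm n}, b\}$. The strategy is to renormalize a second time: apply the reduction of the first paragraph with $\beta$ in place of $\alpha$, producing a basis in which $y_0$ plays the role of $a$; in this new basis both $[a]$ and $\beta'$ are maximal outer fixed points distinct from $[y_0^{\pm1}]$, so Lemma \ref{lem:MOFP} applies again and yields further constraints on how $\phi$ acts with respect to the new basis. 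Combining the constraints from both normalizations forces $\beta' \in \{\beta, \beta^{-1}\}$. Making this cross-normalization explicit---checking that the admissible $(W, p, q, j, \epsilon, n)$ or $U$ from the two decompositions of the same $\phi(b)$ cannot simultaneously describe a third conjugacy class---is the technical heart of the proof and the main obstacle; the three cases of Lemma \ref{lem:MOFP} must be paired off and each pairing ruled out (or shown to collapse to the inversion $\beta' = \beta^{-1}$) by a direct, if delicate, combinatorial comparison of the normal forms.
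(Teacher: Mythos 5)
Your reduction step (normalizing to $\phi(a)=a$ via a basis change and an inner automorphism), your use of Lemma \ref{lem:MOFP} to pin down a second maximal outer fixed point as $[ab^{\pm 1}]$, $[ba^{\pm n}]$, or $[b]$, and your observation that in each case $(a, y_0)$ is a free basis, all match the paper's proof closely. The algorithmic part (\ref{item:algoOFPxtoy}) is handled the same way.

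The problem is the uniqueness argument: you correctly identify that what needs to be shown is that the decompositions of the (single, fixed) word $\phi(b)$ appearing in the three cases of Lemma \ref{lem:MOFP} cannot simultaneously yield genuinely distinct conjugacy classes, but you do not actually carry out that check, explicitly flagging it as ``the main obstacle.'' Moreover, the double-normalization you propose is an unnecessary detour. The paper's argument (though terse in its phrasing, invoking Lemmas \ref{lem:form1}, \ref{lem:form2} and \ref{lem:y=b} directly) does not require it: once $\phi(a)=a$, the freely reduced word $\phi(b)$ determines the admissible form essentially uniquely. Concretely: in form \ref{MOFP:2} the leading $a$-power $a^{-j}$ and trailing $a$-power $a^{j-\epsilon n}$ of $\phi(b)$ are forced (since $W$ begins and ends in $b$-syllables), so $j$ and $\epsilon n$ are determined by $\phi(b)$; form \ref{MOFP:1} is precisely form \ref{MOFP:2} with $\epsilon n = \pm 1$, and the two corresponding elements $ab^{\epsilon}$ and $ba^{\epsilon n}$ are then conjugate up to inversion; and form \ref{MOFP:3} is incompatible with the other two, because writing $\phi(b)\equiv U^{-1}bU$ and comparing with $a^{-j}W^{-1}a^{p}ba^{q}Wa^{j-\epsilon n}$ forces $p+q = 0$, contradicting $p+q=1$ (resp.\ $p+q=\epsilon n\neq 0$). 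This short, direct comparison of the normal forms is what closes the gap; your renormalization scheme is not needed and would itself still require this comparison at the end.

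So: your proposal is on the right track and matches the paper's structure, but the uniqueness step is a genuine gap in your write-up rather than a filled-in proof, and the route you suggest for filling it is longer than the one already available.
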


\begin{proof}
Let $\alpha$ be a known maximal outer fixed point of $\psi$ as in the hypothesis, and let $x\in\alpha$ (when considering Part \ref{item:algoOFPxtoy}, $x$ is the element given to us).
Therefore, $\psi(x)=g^{-1}xg$ for some $g\in F(a,b)$ computable from $\psi$ and $x$. If $\gamma\in\inn(F(a,b))$ is conjugation by $g$, then $\phi:=\psi\gamma$ satisfies $\phi(x)=x$.
By Lemma \ref{lem:fixedprimitives}, $x$ is a primitive element of $F(a,b)$. We can then find another primitive $t$ such that $\{x,t\}$ forms a basis for $F(a,b)$
(such an element $t$ can be easily computed, see for example \cite{Cohen1981WhatDoes}); among the infinitely many possible $t$ just pick one.
We can explicitly determine $\phi(t)$ in terms of $x$ and $t$, since $t$ is a word on $a$ and $b$, and $\phi(a)$, $\phi(b)$ are known.
Note that if the theorem holds for $\phi$ then it also holds for $\psi$; in particular, $\phi$ and $\psi$ have identical outer fixed points.

If there exists some $\epsilon=\pm1$ such that $\phi(t)^{\epsilon}$ has the form in Lemma \ref{lem:MOFP} \ref{MOFP:1} then $y:=xt^{\epsilon}\in\beta\cup\beta^{-1}$, and by Lemma \ref{lem:form1} there are no other outer fixed points.
If there exists some $\epsilon=\pm1$ such that $\phi(t)$ has the form in Lemma \ref{lem:MOFP} \ref{MOFP:2} then $y:=tx^{\epsilon n}\in\beta\cup\beta^{-1}$, $n\in\mathbb{Z}\setminus\{0\}$, and by Lemma \ref{lem:form2} there are no other outer fixed points.
Otherwise, by Lemma \ref{lem:MOFP}, $\phi(t)$ has the form in Lemma \ref{lem:MOFP} \ref{MOFP:3}, and $y:=b\in\beta\cup\beta^{-1}$, and by Lemma \ref{lem:y=b} there are no other outer fixed points.
Therefore, there are at most two maximal outer fixed points of $\psi$.
Now, the two possible forms for $y$ imply that $\{x, y\}$ form a basis of $F(a,b)$ because $\langle x, y\rangle=\langle x, t\rangle$, and $\{x,t\}$ form a free basis of $F(a, b)$, so Part \ref{item:max2OFP1} of the theorem holds.

To prove Part \ref{item:algoOFPxtoy}, recall that $\phi(t)$ is known as a word over $x$ and $t$.
By assumption, the form of $\phi(t)$ corresponds to one of Cases \ref{MOFP:1}, \ref{MOFP:2}, or \ref{MOFP:3} of Lemma \ref{lem:MOFP}, and moreover the specific case can be identified and, in the appropriate cases, the integers $\epsilon$ and $n$ can be computed. This allows us to compute $y$, as required.
\end{proof}

In the proof of Theorem \ref{thm:OuterFPClassification} we choose the primitive $t$ from infinitely many elements. The specific choice of $t$ does not matter: Suppose we had instead chosen an element $t'$ so that $\{x,t'\}$ forms a basis, and obtained $y'$ as a maximal outer fixed point. As $\langle x, t\rangle=\langle x, t'\rangle$, we get $t'=x^it^{\lambda}x^k$ for $\lambda=\pm 1$ and some integers $i, k$ by analysing the form of primitives in $F_2$ given in Lemma \ref{lem:primitives}. Suppose (for simplicity) that $\lambda=\epsilon=1$ and $\phi(t)\equiv x^{-j}W^{-1}x^ptx^qWx^{j-n}$. A short calculation gives $\phi(t')=\phi(x^itx^j)\equiv x^{i-j+\square}W'^{-1}x^{p-i+\triangle}tx^{q-k-\triangle}W'x^{j-n+k-\square}$ as a word over $\{x,t'\}$, where $W'$ starts and ends in $t'$-syllables, and $\triangle, \square$ represent the powers of $x$ that occur at the beginning and end of $W$ when rewriting $W$ in terms of $x$ and $t'$. This gives $n'=p+q-i-k$ and $\epsilon'=1$, so $y'=(t')^{\epsilon'}x^{n'}=x^itx^kx^{n-i-k}=x^itx^{n-i}\sim y$. One can similarly verify that $[y]=[y']$ in the remaining cases.

\p{Examples of outer fixed points}
By Lemma \ref{lem:fpUnique}, if $\psi: F(a, b)\rightarrow F(a, b)$ is non-surjective then $\psi$ has at most one maximal fixed point (up to inversion). In contrast, Theorem \ref{thm:OuterFPClassification} allows for two maximal \emph{outer} fixed points (up to inversion). The following examples show that all cases of Theorem \ref{thm:OuterFPClassification} occur: there exist endomorphisms with one and with two maximal outer fixed points (up to inversion).

\begin{example}
\label{ex:4MOFP}
In the examples below, the maps $\psi_i$ are injective but not surjective and satisfy $\det(\Psi_{i,(a, b)})=\pm 1$.
\begin{enumerate}[label=\arabic*.]%\leavevmode
\item The map $\psi_1: F(a, b)\rightarrow F(a, b)$ given by $\psi_1(a)=a$, $\psi_1(b)=baba^2$ has a single maximal outer fixed point $[a]$, up to inversion. This is because the word $baba^2$ does not match the possible images of $b$ given in Lemma \ref{lem:MOFP}, as these possible images each contain both $b$ and $b^{-1}$ (these images are $a^{-j}W^{-1}a^{p}b^{\epsilon}a^qWa^{j-1}$, $a^{-j}W^{-1}a^{p}ba^qWa^{j-\epsilon n}$, and $U^{-1}bU$, and $W$ and $U$ contain $b$-syllables).

\item The map $\psi_2: F(a, b)\rightarrow F(a, b)$ given by $\psi_2(a)=a$, $\psi_2(b)=a^{-2}b^{-1}aba^2ba^{-1}$ has maximal outer fixed points $[a]$ and $[ba^3]$, and inverses $[a^{-1}]$ and $[a^{-3}b^{-1}]$. 
 
As another example, we can view $\psi_2$ with the basis $x:=ab$ and $y:=b$ to obtain the map $\psi'_2: F(x, y)\rightarrow F(x, y)$ given by $\psi'_2(x)=yx^{-1}y^{-1}x^2y^{-1}xyx^{-1}$ and $\psi'_2(y)=yx^{-1}yx^{-1}y^{-1}x^2y^{-1}xyx^{-1}$. This map has maximal outer fixed points $[xy^{-1}]$ and $[y(xy^{-1})^3]$, and their inverses $[yx^{-1}]$ and $[(yx^{-1})^3y^{-1}]$.
\end{enumerate}
\end{example} 

\p{Computing {\boldmath$\MOFix(\psi)$}}
We now resolve Case \ref{maxoutalgo:II} of Proposition \ref{prop:maxoutalgo}.
Recall that $M_{\psi}:=\langle a, b, t\mid a^t=\psi(a), b^t=\psi(b)\rangle$, and that in order to describe all outer fixed points it is sufficient to find the maximal ones.
\begin{lemma}\label{lem:algo}
There is an algorithm with input a non-surjective monomorphism $\psi: F(a, b)\rightarrow F(a, b)$ with $\det(\Psi_{(a,b)})=\pm1$ and with output either the maximal outer fixed points of $\psi$, or the trivial conjugacy class $[1]_F$ if $\psi$ has no non-trivial outer fixed points.
\end{lemma}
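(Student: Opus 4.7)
The plan is to combine three results already proved: Proposition \ref{prop:KapMut} to detect outer fixed points inside some power of $\psi$; Theorem \ref{thm:OuterFPClassification} to enumerate the (at most two, up to inversion) maximal outer fixed points of that power from any single one; and Lemma \ref{lem:extract} to cut $\MOFix(\psi^k)$ down to $\MOFix(\psi)$.

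First I would run the algorithm of Proposition \ref{prop:KapMut} on $\psi$. If it reports that no non-zero $k$ has $\psi^k$ with an outer fixed point, then in particular $\psi$ itself has none and the algorithm outputs $[1]_F$. Otherwise, the tester hands back an integer $k\neq 0$ and an outer fixed element $w$ of $\psi^k$. The element $w$ need not be maximal, so I would write $w=w_0^i$ where $w_0$ is the root of $w$ (computable by standard free group algorithms). From $\psi^k(w)\sim w$ we get $\psi^k(w_0)^i\sim w_0^i$, and uniqueness of roots in free groups then forces $\psi^k(w_0)\sim w_0$, so $[w_0]$ is a \emph{maximal} outer fixed point of $\psi^k$.

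Next I would invoke Theorem \ref{thm:OuterFPClassification} on $\psi^k$: this map is a non-surjective monomorphism, and its associated matrix is $(\Psi_{(a,b)})^k$, which has determinant $(\pm 1)^k=\pm 1$, so the hypotheses are met. Feeding the representative $w_0$ into the algorithmic part \ref{item:algoOFPxtoy} either produces a second maximal outer fixed element $y$ (by forming $\phi=\psi^k\gamma$ with $\phi(w_0)=w_0$, completing $w_0$ to a basis with some primitive $t$, and then pattern-matching $\phi(t)$ against the three cases of Lemma \ref{lem:MOFP}) or reports that no second maximal outer fixed point exists when none of those patterns is met. Either way this yields the full set $\MOFix(\psi^k)$.

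Finally I would apply Lemma \ref{lem:extract}: for each $\alpha\in\MOFix(\psi^k)$, take a representative $x\in\alpha$ and decide whether $\psi(x)\sim x$ in $F(a,b)$, which is routine since conjugacy in free groups is decidable; keep $\alpha$ if so. If the resulting set is non-empty, output it as $\MOFix(\psi)$; otherwise output $[1]_F$. The bulk of the conceptual work lives in the first step: the hyperbolicity tester of Proposition \ref{prop:KapMut} is guaranteed to terminate but not in any efficient way, while the root extraction, the application of Theorem \ref{thm:OuterFPClassification}, and the conjugacy-class filtering are all effective bookkeeping on top of it.
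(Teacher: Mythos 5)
Your proposal is correct and follows essentially the same three-step structure as the paper's algorithm: run the hyperbolicity tester (Proposition \ref{prop:KapMut}) to either rule out outer fixed points or obtain one for some power $\psi^k$, then enumerate $\MOFix(\psi^k)$ via Theorem \ref{thm:OuterFPClassification}, and finally filter down to $\MOFix(\psi)$ using conjugacy testing as in Lemma \ref{lem:extract}. Your explicit root-extraction step to pass from the outer fixed element $w$ to a maximal one $w_0$ before invoking Theorem \ref{thm:OuterFPClassification} is a small point of care that the paper glosses over (it asserts $[x]$ is maximal without comment), but it does not change the route.
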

\begin{proof}
The algorithm is as follows.
\begin{small}

\begin{itemize}%[leftmargin=*]

\item[\textbf{Input}:]  Images $\psi(a)$ and $\psi(b)$, where $\psi: F(a, b)\rightarrow F(a, b)$ with $\det(\Psi_{(a,b)})=\pm1$
\vspace{.1cm}

\item[\textbf{Step 1}:] \textbf{Determine existence of outer fixed points of powers of $\psi$}.
\vspace{.1cm}

\noindent Run in parallel the algorithm to find a hyperbolicity constant $\delta$ for $M_{\psi}$ and the algorithm which searches for an element $x\in F(a, b)\setminus \{1\}$ and integers $p, q\in\mathbb{Z}$ such that $\psi^p(x)\sim x^q$. By Theorem \ref{thm:HypMappingTori}, this algorithm terminates.\vspace{.1cm}
\begin{enumerate}[leftmargin=*]

\item[] If some $\delta$ is found then $\psi$ has no fixed points (outer or otherwise!) by Lemma \ref{lem:hypobstruction}, so output $[1]_F$.\vspace{.1cm}

\item[] Else, we found some $x\in F(a, b)\setminus \{1\}$ and $p, q\in\mathbb{Z}\setminus \{0\}$ such that $\psi^p(x)\sim x^q$.\vspace{.1cm}

\begin{itemize}[leftmargin=*]

\item[]If $q\neq\pm1$ then $\psi$ has no fixed points (outer or otherwise!) by Lemma \ref{lem:IdentityMatrixCase}, so output $[1]_F$. 

\item[]If $q=\pm1$ then $[x]$ is a maximal outer fixed point of $\psi^{2p}$, so feed $x$ and $p$ to Step 2. \vspace{.1cm}
\end{itemize}
\end{enumerate}
\item[\textbf{Step 2}:] \textbf{Find all maximal outer fixed points of $\psi^{2p}$, based on $x$ and $p$}.\vspace{.1cm}

\noindent Find $g$ such that $\psi^{2p}(x)=g^{-1}xg$. Compute the map $\phi:=\psi^{2p}\gamma$, where $\gamma\in\inn(F(a,b))$ is conjugation by $g^{-1}$, so that $\phi(x)=x$. Find $t$ such that $\{x,t\}$ form a basis of $F(a,b)$, and compute $\phi(t)$ in terms of $\{x,t\}$. \vspace{.1cm}
\begin{itemize}[leftmargin=*]
\item[]If $\phi(t)$ is not of any of the forms in Lemma \ref{lem:MOFP} \ref{MOFP:1}--\ref{MOFP:3}, applied to $F(x,t)$, then by Theorem \ref{thm:OuterFPClassification}, $\phi$, and so also $\psi^{2p}$, have the unique maximal outer fixed point (up to inversion) $[x]$, so we feed $[x]$ into Step 3.\vspace{.1cm}

\item[]If $\phi(t)$ is of one of the form in Lemma \ref{lem:MOFP} \ref{MOFP:1}--\ref{MOFP:3}, applied to $F(x,t)$, by Theorem \ref{thm:OuterFPClassification} we can compute a second maximal outer fixed point $[y]\neq [x^{\pm 1}]$ from $[x^{\pm1}]$, so we feed both $[x^{\pm1}]$ and $[y^{\pm1}]$ into Step 3. \vspace{.1cm}
\end{itemize}
Thus $\psi^{2p}$ has either $[x^{\pm 1}]$, or $[x^{\pm1}]$ and $[y^{\pm1}]$, as maximal outer fixed points.\vspace{.1cm}

\item[\textbf{Step 3}:] \textbf{Find all maximal outer fixed points of $\psi$}.\vspace{.1cm}

\noindent If $[z]$ is a maximal outer fixed point of $\psi$ then it is a maximal outer fixed point of $\psi^{2p}$. Hence, run through the maximal outer fixed points of $\psi^{2p}$ from Step 2 and verify whether they are maximal outer fixed points of $\psi$ itself. Output the results.
\end{itemize}
\end{small}
\end{proof}

We can now prove Theorem \ref{thm:maxoutalgo}, that is, find the outer maximal fixed points of a non-surjective monomorphism $\psi$.

\begin{proof}[Proof of Theorem \ref{thm:maxoutalgo}]
Recall the two cases of Theorem \ref{thm:maxoutalgo} stated in Proposition \ref{prop:maxoutalgo}.
Case \ref{maxoutalgo:I} of Proposition \ref{prop:maxoutalgo} follows from Lemma \ref{lem:algo_case1}.
Case \ref{maxoutalgo:II} of Proposition \ref{prop:maxoutalgo} follows from Lemma \ref{lem:algo}.
\end{proof}

%%%%%%%%%%%%%%%%%%%%%%%%%%%%%%%%%%%%%%%%%%%%%%%%%%%%%%%%%%%%%%%%%%%%%%%%%%%%
%%%-------------------------------------------------------------------------------------------------------------------------------------------------------------------------------------------------%%%
%%%-------------------------------------------------------------------------------------------------------------------------------------------------------------------------------------------------%%%
%%%------------------------------------------------From OFP to FP--------------------------------------------------------------------------------------------------%%%
%%%-------------------------------------------------------------------------------------------------------------------------------------------------------------------------------------------------%%%
%%%-------------------------------------------------------------------------------------------------------------------------------------------------------------------------------------------------%%%
%%%%%%%%%%%%%%%%%%%%%%%%%%%%%%%%%%%%%%%%%%%%%%%%%%%%%%%%%%%%%%%%%%%%%%%%%%%%

\section{From outer fixed points to fixed points}
\label{sec:FixedPoints}
In this section we prove Theorem \ref{thm:basisexists}, that there exists an algorithm with input $\psi\in\emo(F_2)$ and output a basis for $\fix(\psi)$.
We split this section into three subsections. In Section \ref{sec:endoTwist} we use the endomorphism-twisted conjugacy problem for $F_2$ to link Theorems \ref{thm:basisexists} and \ref{thm:maxoutalgo}. In Section \ref{sec:solvingInstances} we prove that the endomorphism-twisted conjugacy problem for $F_2$ is soluble for certain instances. In Section \ref{sec:MainProof}, we use these instances and Section \ref{sec:endoTwist} to prove Theorem \ref{thm:basisexists}.

If in the future the endomorphism-twisted (or just monomorphism-twisted) conjugacy problem for $F_2$ is shown to be decidable in general, then Section \ref{sec:solvingInstances} may be disregarded; we structure the whole section so that the proofs remain clear when using the endomorphism-twisted conjugacy problem for $F_2$ as a black box.

\subsection{Connecting to the endomorphism-twisted conjugacy problem}
\label{sec:endoTwist}
For an endomorphism $\phi$ of the free group $F$, two elements $U, V\in F$ are \emph{$\phi$-twisted-conjugate}, written $U\sim_{\phi}V$, if there exists some $W\in F$ such that $U=\phi(W)VW^{-1}$; the corresponding decision problem is called the \emph{$\phi$-twisted-conjugacy problem for $F$}.
The relation $\sim_{\phi}$, also known as Reidemeister's relation, plays an important role in Nielsen fixed point theory and its study has become a fruitful research area
\cite{Felshtyn1994Reidemeister}
\cite{Jiang2005Primer}
\cite{Hart2005Algebraic}
%\cite{Kim2012WYK}
\cite{Felshtyn2006TwistedBurnsideThm}
\cite{Felshtyn2007TwistedBurnsideFrobenius}
\cite{Goncalves2009Twisted}
\cite{Wong2010Combinatorial}
\cite{Yi2015Nielsen}
\cite{Jiang2018Some}
\cite{Araujo2020Twisted}.
The endomorphism-twisted conjugacy problem for free groups is known to be decidable for automorphisms \cite{Bogopolski2006Conjugacy} and for certain non-surjective maps \cite{Kim2016Twisted}, but is open for the specific endomorphisms which we require to link Theorems \ref{thm:basisexists} and \ref{thm:maxoutalgo}.

For a word $Z$, define the endomorphism $\varphi_Z: F(a, b)\rightarrow F(a, b)$ as
\[\varphi_Z(a)=a, \varphi_Z(b)= Z.\]
Lemma \ref{lem:WordstoFP} connects the existence of fixed points (in the conjugacy class of $a$) of the input map $\psi$ to the $\varphi_Z$-twisted conjugacy problem for words $P$ and $a^k$, where $P$ is given but the integer $k$ is unknown.
\begin{lemma}
\label{lem:WordstoFP}
Suppose $\psi \in \emo(F(a,b))$ satisfies $\psi(a)=P^{-1}aP$ and $\psi(b)=Q$, where $P, Q \in F(a,b)$ are given, and define $Z:=PQP^{-1}$.

There exist $W \in F(a,b)$ and $k\in\mathbb{Z}$ such that
\begin{equation}
P=\varphi_Z(W)a^kW^{-1}\label{mainEq}
\end{equation}
if and only if $[a]\cap\fix(\psi)\neq \emptyset$.
\end{lemma}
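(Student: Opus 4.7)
The plan is to exploit the very close relationship between the two endomorphisms in question. The key preliminary observation is that $\varphi_Z$ is exactly $\psi$ followed by conjugation by $P$, i.e.\ $\varphi_Z(x) = P\psi(x)P^{-1}$ for all $x \in F(a,b)$. This is immediate by checking on the generators: $P\psi(a)P^{-1} = PP^{-1}aPP^{-1} = a = \varphi_Z(a)$ and $P\psi(b)P^{-1} = PQP^{-1} = Z = \varphi_Z(b)$. Once this is established, the rest of the argument is a direct translation.

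For the forward direction, suppose $WaW^{-1} \in \fix(\psi)$ for some $W \in F(a,b)$. Expanding $\psi(WaW^{-1}) = WaW^{-1}$ gives $\psi(W)\cdot P^{-1}aP\cdot \psi(W)^{-1} = WaW^{-1}$, which I would rearrange as $UaU^{-1} = a$ with $U := W^{-1}\psi(W)P^{-1}$. Since the centralizer of $a$ in $F(a,b)$ is $\langle a\rangle$, we obtain $U = a^{k'}$ for some $k' \in \mathbb{Z}$, equivalently $\psi(W) = Wa^{k'}P$. Substituting $\psi(W) = P^{-1}\varphi_Z(W)P$ from the key identity and isolating $P$ yields $P = \varphi_Z(W)a^{-k'}W^{-1}$, so (\ref{mainEq}) holds with $k := -k'$.

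For the reverse direction, assume (\ref{mainEq}) holds with some $W$ and $k$. Substituting $\varphi_Z(W) = P\psi(W)P^{-1}$ and rearranging gives $\psi(W) = Wa^{-k}P$. A one-line calculation then confirms
\[
\psi(WaW^{-1}) \;=\; \psi(W)P^{-1}aP\psi(W)^{-1} \;=\; Wa^{-k}P\cdot P^{-1}aP\cdot P^{-1}a^kW^{-1} \;=\; WaW^{-1},
\]
so $WaW^{-1}$ lies in $[a]\cap\fix(\psi)$.

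There is really no serious obstacle here: the only ingredient beyond algebraic manipulation is the fact that centralizers of nontrivial elements in a free group are cyclic, which is what forces the element $W^{-1}\psi(W)P^{-1}$ to be a power of $a$. The lemma is best viewed as the reformulation of the question ``is some element of $[a]$ fixed by $\psi$?'' as a single instance of the $\varphi_Z$-twisted conjugacy problem for $P$ and the (infinite) family $\{a^k : k\in\mathbb{Z}\}$; the next subsection is presumably devoted to solving this particular family of instances.
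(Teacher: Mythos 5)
Your proof is correct and uses essentially the same idea as the paper: that $\varphi_Z$ and $\psi$ differ by an inner automorphism, namely $\varphi_Z = \operatorname{ad}_P \circ \psi$, combined with the fact that centralizers of nontrivial elements in a free group are cyclic. You state this conjugation identity explicitly up front, whereas the paper derives it inline through a chain of word-substitution equalities $W(P^{-1}aP, P^{-1}ZP) = P^{-1}W(a,Z)P$; your packaging is marginally cleaner, but the argument is the same.
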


\begin{proof}
Suppose there exist $W \in F(a,b)$ and $k\in\mathbb{Z}$ such that (\ref{mainEq}) holds.
Then $W^{-1}aW$ is a fixed point as follows:
\begin{align*}
\psi(WaW^{-1})&=W(\psi(a), \psi(b))P^{-1}aPW^{-1}(\psi(a), \psi(b))\\
&=W(P^{-1}aP, Q)P^{-1}aPW^{-1}(P^{-1}aP, Q)\\
&=W(P^{-1}aP, P^{-1}ZP)P^{-1}aPW^{-1}(P^{-1}aP, P^{-1}ZP)\\
&=P^{-1}W(a, Z)aW^{-1}(a, Z)P\\
&=P^{-1}\varphi_Z(W)a\varphi_Z(W)^{-1}P\\
&=Wa^{-k}aa^kW^{-1}\\
&=WaW^{-1}.
\end{align*}
Next, suppose that $[a]\cap\fix(\psi)\neq \emptyset$, and let $x\in[a]\cap\fix(\psi)$. Let $W\in F(a, b)$ be such that $a=W^{-1}xW$. As above, $\psi(WaW^{-1})=P^{-1}\varphi_Z(W)a\varphi_Z(W)^{-1}P$. Now, $x=WaW^{-1}\in \fix(\psi)$, so $WaW^{-1}=P^{-1}\varphi_Z(W)a\varphi_Z(W)^{-1}P$. Therefore, $\varphi_Z(W)^{-1}PW$ centralises the generator $a$, and so there exists some $k\in\mathbb{Z}$ such that $\varphi_Z(W)^{-1}PW=a^k$, and the result follows.
\end{proof}

The map $\psi$ below has the behaviour described in Lemma \ref{lem:WordstoFP}.
\begin{example}
Consider the endomorphism $\psi$ given by $\psi(a)=b^{-1} a b$ and $\psi(b)=(a^2b)^{-1}ba^2b (a^2b)$. That is, $P=b$, $Q=(a^2b)^{-1}ba^2b (a^2b)$, and so $Z=a^{-2}ba^2ba^2$. In this case, $W=a^2ba$ and $k=-2$ satisfy the conditions in Lemma \ref{lem:WordstoFP}. A routine computation shows that $\psi(WaW^{-1})=WaW^{-1}$.
\end{example}

Lemma \ref{lem:WordstoFP} reduces the problem of algorithmically determining if $[a]\cap\fix(\psi)\neq \emptyset$ to algorithmically solving Equation (\ref{mainEq}) for $W \in F(a,b)$ and $k\in\mathbb{Z}$.
In Lemma \ref{lem:fromOuterToActualALGORITHM} we solve this equation, modulo our work on the $\varphi_Z$-twisted conjugacy problem in Section \ref{sec:solvingInstances}. First, we state a convention which we shall use.
\begin{remark}\label{Wbsyllable}
In the remainder of this section we usually assume that the word $W$ in (\ref{mainEq}) ends in a $b$-syallable, and in particular has the form
\begin{equation}
W(a, b)=a^{i_1}b^{j_1} \dots a^{i_n}b^{j_n}\label{formOfW},
\end{equation}
with all $i_m, j_m$ non-zero, except possibly $i_1$.
We may do this without loss of generality, as if $W$ ends in an $a$-syllable then this will cancel when forming $\varphi_Z(W)a^kW^{-1}$.
\end{remark}

\begin{lemma}
\label{lem:fromOuterToActualALGORITHM}
Let $Z\in F(a, b)$ be such that $\varphi_Z\in \emo(F(a, b))$ is injective but not surjective.
Assume that there exists an algorithm with input $P\in F(a, b)$ and $k\in\mathbb{Z}$ which determines whether or not $P$ and $a^k$ are $\varphi_Z$-twisted-conjugate.

Then there exists an algorithm which determines, on input a word $P\in F(a, b)$, whether or not there exist $W \in F(a,b)$ and $k\in\mathbb{Z}$ such that (\ref{mainEq}) holds.
\end{lemma}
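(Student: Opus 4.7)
My plan is to use exponent-sum constraints in $F(a, b)$ to reduce the infinite search over $k \in \mathbb{Z}$ in (\ref{mainEq}) to a finite collection of candidate values of $k$, each of which can then be fed to the assumed $\varphi_Z$-twisted-conjugacy algorithm. The key input is that, because $\varphi_Z(a) = a$ and $\varphi_Z(b) = Z$, one has $\sigma_a(\varphi_Z(U)) = \sigma_a(U) + \sigma_b(U)\sigma_a(Z)$ and $\sigma_b(\varphi_Z(U)) = \sigma_b(U) \sigma_b(Z)$ for every $U \in F(a, b)$. Throughout I would work with $W$ in the normalised form of Remark \ref{Wbsyllable}.

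Applying $\sigma_a$ and $\sigma_b$ to $P = \varphi_Z(W) a^k W^{-1}$ then yields the two linear constraints
\[
\sigma_b(P) = \sigma_b(W)\bigl(\sigma_b(Z) - 1\bigr), \qquad \sigma_a(P) = \sigma_b(W)\sigma_a(Z) + k,
\]
which any witnessing pair $(W, k)$ must satisfy. If $\sigma_b(Z) \neq 1$, the first equation forces $\sigma_b(W) = \sigma_b(P)/(\sigma_b(Z) - 1)$: the algorithm outputs ``no'' if this fails to be an integer, and otherwise the second equation determines $k$ uniquely, so a single oracle call on $(P, a^k)$ decides the problem. If $\sigma_b(Z) = 1$, the first equation degenerates to the necessary condition $\sigma_b(P) = 0$ (otherwise output ``no''); and when additionally $\sigma_a(Z) = 0$, the second equation still pins $k = \sigma_a(P)$ down uniquely, so again one oracle call suffices.

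The main obstacle is the remaining subcase $\sigma_b(Z) = 1$ with $\sigma_a(Z) \neq 0$, in which $k$ is determined only modulo $\sigma_a(Z)$ and a priori ranges over the infinite arithmetic progression $\sigma_a(P) + \sigma_a(Z)\mathbb{Z}$. My plan for this subcase is to exhibit, for every $k$, an explicit $U \in F(a, b)$ witnessing $a^k \sim_{\varphi_Z} a^{k + \sigma_a(Z)}$; such a uniform family of witnesses would collapse $\{a^k : k \in \mathbb{Z}\}$ into at most $|\sigma_a(Z)|$ $\varphi_Z$-twisted-conjugacy classes and so reduce the decision to finitely many oracle calls on a transversal of residues modulo $\sigma_a(Z)$. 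A fallback route, if no such uniform witness can be found, is to push the normalised form $W = a^{i_1}b^{j_1} \cdots a^{i_n}b^{j_n}$ through $P = \varphi_Z(W) a^k W^{-1}$ and use a cancellation analysis to bound $n$, hence $\sigma_b(W)$ and $k$, in terms of $|P|$ and $|Z|$, again leaving only finitely many candidate $k$'s to submit to the oracle.
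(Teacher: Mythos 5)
Your exponent-sum bookkeeping is correct, and in the cases $\sigma_b(Z)\neq 1$, or $\sigma_b(Z)=1$ with $\sigma_a(Z)=0$, it does pin down $k$ (and in the former case also $\sigma_b(W)$) uniquely, so a single oracle call suffices; that part of the argument is a clean reduction. However, the subcase you flag as the main obstacle, namely $\sigma_b(Z)=1$ and $\sigma_a(Z)\neq 0$, is not merely unresolved: the proposed route is provably blocked. Your plan is to exhibit, for every $k$, a witness $U$ with $a^k = \varphi_Z(U)\,a^{k+\sigma_a(Z)}\,U^{-1}$, i.e.\ to show that the $a^k$ fall into at most $|\sigma_a(Z)|$ twisted-conjugacy classes. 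This is false when $\varphi_Z$ is injective but not surjective. Writing such a $U$, after discarding a terminal power of $a$, as a non-empty word ending in a $b$-syllable, the paper's Lemma~\ref{lem:completeCancellation} shows that $\varphi_Z(W)\,a^{k'}\,W^{-1}\notin\langle a\rangle$ for every such $W$; hence $a^k\sim_{\varphi_Z}a^{k'}$ forces $k=k'$, and the powers of $a$ lie in \emph{pairwise distinct} $\varphi_Z$-twisted-conjugacy classes. (Note also that in the paper's intended application, where $\det(\Psi_{(a,b)})=\pm 1$, the hard case $\sigma_b(Z)=1$ is not peripheral but the generic one.) Your fallback of ``a cancellation analysis to bound $n$'' is the right instinct, but as stated it is a restatement of the missing step rather than a proof, and in any event bounding the number of $b$-syllables $n$ does not by itself bound $\sigma_b(W)=j_1+\cdots+j_n$, hence does not bound $k$.

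For comparison, the paper's proof of this lemma sidesteps the Reidemeister-class question entirely. Taking $W$ in the normalised form of Remark~\ref{Wbsyllable} (ending in a $b$-syllable), rewrite (\ref{mainEq}) as $PW=\varphi_Z(W)a^k$ and look at the terminal syllable. Since $\varphi_Z$ is injective, $Z$ is not a power of $a$ and the $Z_1$-blocks in $\varphi_Z(W)$ survive free reduction; so if $|k|>|Z|$ the right-hand side ends in an $a$-syllable, while if $|W|>|P|$ the left-hand side ends in a $b$-syllable. Hence either $|k|\leq|Z|$ (finitely many oracle calls) or $|W|\leq|P|$ (finitely many candidate $W$, with $k$ then decidable via the generalised word problem). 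This dichotomy is what actually makes the search finite; if you want to rescue your approach, you would need to replace the ``uniform witness'' plan for the $\sigma_b(Z)=1$, $\sigma_a(Z)\neq 0$ case with an argument of this cancellation-based flavour.
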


\begin{proof}
We firstly prove that if $W \in F(a,b)$ and $k\in\mathbb{Z}$ are such that $W$ ends in a $b$-syllable and (\ref{mainEq}) holds then either $|k|\leq|Z|$ or $|W|\leq |P|$.
Suppose $|k|>|Z|$ and $|W|>|P|$, and consider (\ref{mainEq}) in the form $PW=\varphi_Z(W)a^k$.
As $\varphi_Z$ is injective, $Z$ is not a power of $a$, and can be written, cancellation-free, as $Z=a^{q_0}Z_0^{-1} Z_1Z_0a^{q_1}$ where $Z_0^{-1}Z_1Z_0$ begins and ends in $b$-syllables, $Z_1$ is cyclically reduced, and $Z_0$, $q_0$ and $q_1$ might be trivial; now observe (for example, via Stalling's foldings) that no cancellation happens between any $Z_1$ and any other word when forming $W(\varphi_Z(a), \varphi_Z(b))$. Then as $|k|>|Z|$ and $W$ ends in a $b$-syllable, $\varphi_Z(W)a^{k}$ must end in an $a$-syllable.
However, as $|W|>|P|$, $PW$ must end in a $b$-syllable, a contradiction.

Our algorithm is therefore as follows: Check, via the algorithm in the hypothesis, whether for any $|k|\leq|Z|$ a word $W$ ending in a $b$-syllable satisfying (\ref{mainEq}) exists.
If such a pair $(W, k)\in F(a, b)\times\mathbb{Z}$ exists, then output it.
Else, verify for each word $W$ of length $\leq|P|$ whether or not such a $k\in\mathbb{Z}$ exists using the generalised word problem for free groups.
If such a pair $(W, k)\in F(a, b)\times\mathbb{Z}$ exists, then output it.
Else, output ``no pair exists''.

The correctness of the algorithms follows from the fact that if $W$ satisfying (\ref{mainEq}) exists, then a word $W_0$ ending in a $b$-syllable exists (see Remark \ref{Wbsyllable}) such that $P=\varphi_Z(W_0)a^kW_0^{-1}$, and then $|k|\leq|Z|$ or $|W_0|\leq |P|$, as proven above.
\end{proof}

%%%%%%%%%%%%%%%%%%%%%%%%%%%%%%%%%%%%%%%%%%%%%%%%%%%%%%%%%%%%%%%%%%%%%%%%%%%%
%%%-------------------------------------------------------------------------------------------------------------------------------------------------------------------------------------------------%%%
%%%-------------------------------------------------------------------------------------------------------------------------------------------------------------------------------------------------%%%
%%%------------------------------------------------twisted conjugacy problem stuff--------------------------------------------------------------------------------------------------%%%
%%%-------------------------------------------------------------------------------------------------------------------------------------------------------------------------------------------------%%%
%%%-------------------------------------------------------------------------------------------------------------------------------------------------------------------------------------------------%%%
%%%%%%%%%%%%%%%%%%%%%%%%%%%%%%%%%%%%%%%%%%%%%%%%%%%%%%%%%%%%%%%%%%%%%%%%%%%%
\subsection{On the $\mathbf{\varphi_Z}$-twisted conjugacy problem}
\label{sec:solvingInstances}

In this section we prove the existence of the algorithm from the assumptions of Lemma \ref{lem:fromOuterToActualALGORITHM}.
Recall that \[\varphi_Z(a)=a, \varphi_Z(b)= Z.\]

\begin{lemma}
\label{lem:twistedConjugacy}
Let $Z\in F(a, b)$ be such that $\varphi_Z\in \emo(F(a, b))$ is injective but not surjective.
There exists an algorithm with input $P\in F(a, b)$ and $k\in\mathbb{Z}$ which determines whether $P$ and $a^k$ are $\varphi_Z$-twisted-conjugate.
\end{lemma}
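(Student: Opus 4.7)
The plan is to bound the length of a putative conjugator $W$ and then decide by a finite search. By Remark \ref{Wbsyllable}, I may assume $W$ ends in a $b$-syllable, or $W=1$ in which case I check $P=a^k$ directly. Write $W = a^{i_1}b^{j_1}\cdots a^{i_n}b^{j_n}$ as in \eqref{formOfW} and expand
$\varphi_Z(W)a^kW^{-1} = a^{i_1}Z^{j_1}a^{i_2}Z^{j_2}\cdots a^{i_n}Z^{j_n}a^kb^{-j_n}a^{-i_n}\cdots b^{-j_1}a^{-i_1}$, so that I can analyze the free reduction leading to $P$.

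The key step is to establish a computable bound $|W| \leq B(|P|,|Z|,|k|)$ via cancellation analysis. Following the strategy already used in the proof of Lemma \ref{lem:fromOuterToActualALGORITHM}, I would decompose $Z = a^{q_0}Z_0^{-1}Z_1Z_0a^{q_1}$ with $Z_1$ cyclically reduced and $Z_0^{-1}Z_1Z_0$ beginning and ending in $b$-syllables. Since $\varphi_Z$ is injective and non-surjective, $Z$ is not a power of $a$, so $Z_1$ is a non-trivial word containing $b$-letters. Then cancellation between consecutive blocks $Z^{j_l}$ and $Z^{j_{l+1}}$ in $\varphi_Z(W)$ is confined to the $a$-syllable interface $a^{q_1+i_{l+1}+q_0}$ and possibly the flanking $Z_0^{\pm 1}$ factors, and cannot reach the interior occurrences of $Z_1^{\pm 1}$. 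Moreover, since $W$ ends in a $b$-syllable, the tail $a^kb^{-j_n}a^{-i_n}\cdots$ on the right can only eat into the final block $Z^{j_n}$.

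Consequently, the freely reduced form of $\varphi_Z(W)a^kW^{-1}$ contains at least $n-1$ disjoint occurrences of $Z_1$, giving $|P| \geq (n-1)|Z_1|$, which bounds $n$. Parallel counts on the number of surviving $Z_1$-copies within each $Z^{j_l}$ and on the surviving $a$-syllables between consecutive $Z_1$-blocks bound each $|j_l|$ and $|i_l|$, and hence $|W|$. The algorithm then enumerates all $W$ in the form \eqref{formOfW} with $|W| \leq B$ and checks, using the word problem in $F(a,b)$, whether $\varphi_Z(W)a^kW^{-1}=P$ for some such $W$.

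The hard part is making this cancellation analysis rigorous at the boundary blocks $Z^{j_1}$ and $Z^{j_n}$: these can experience substantial cancellation from the factor $a^{i_1}$ on the left and from $a^kW^{-1}$ on the right, and for small $|k|$ (in particular when $|k| \leq |Z|$, the regime in which the bound of Lemma \ref{lem:fromOuterToActualALGORITHM} does not apply) cascading cancellation can eat a sizeable portion of the last $Z^{j_n}$-block. One must carefully argue that, nevertheless, the interior blocks remain intact and that the count of surviving $Z_1$-copies and $a$-syllables yields a linear lower bound on $|P|$ in terms of $|W|$.
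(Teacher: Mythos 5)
The central claim your proposal rests on---that the freely reduced form of $\varphi_Z(W)a^kW^{-1}$ contains at least $n-1$ disjoint occurrences of $Z_1$, whence $|P|\geq(n-1)|Z_1|$---is false. Take $Z=bab$ (so $Z_0=1$, $Z_1=bab$, $q=0$; one checks $\varphi_Z$ is injective and non-surjective), $W=bababab$ and $k=0$. Then $\varphi_Z(W)=(ba)^7b$ and $\varphi_Z(W)W^{-1}$ freely reduces to $P=(ba)^4$, so $|P|=8$ while $n=4$ and $(n-1)|Z_1|=9$. The problem is exactly the one you flag at the end but then dismiss: when the tail $a^kW^{-1}$ meets $\varphi_Z(W)$, the cancellation can cascade through several interior $Z$-blocks, not just $Z^{j_n}$, because nothing forces the first $b$-syllable of $Z^{j_l}$ to differ from the next $b$-syllable of $W^{-1}$ once the intervening material has been erased. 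Your argument that $Z_1$'s are untouched is valid \emph{within} the free reduction of $\varphi_Z(W)$ alone (this is the observation in the proof of Lemma \ref{lem:fromOuterToActualALGORITHM}), but it does not survive the multiplication by $a^kW^{-1}$.

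There is a second, independent failure in the regime $t_b(Z_1)\leq 1$, which the paper's Proposition \ref{prop:BoundSyl} treats by a delicate four-way case split (Lemmas \ref{lem:Case1non-cycRed}, \ref{lem:SylCase1}, \ref{lem:Case2Z1has-one-t}, \ref{lem:SylCase2a}, \ref{lem:SylCase2b}). When $Z=Z_0^{-1}Z_1Z_0a^q$ with $q\neq 0$ and a ``cancelling'' $a$-syllable $a^{i_m}=a^{\mp q}$ occurs between two $Z^{j}$-blocks of the same sign, the flanking $Z_0$ and $Z_0^{-1}$ annihilate and the two adjacent $Z_1$'s merge (e.g.\ $a^r\cdot a^r=a^{2r}$ when $Z_1=a^r$). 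Thus ``disjoint occurrences of $Z_1$'' is not even meaningful inside $\varphi_Z(W)$ in these cases, and one cannot extract a linear lower bound for $|P|$ from them. This is precisely why the paper introduces the notion of cancelling $a$-syllables, the approximation relation $\sim$, and the bouncing-back argument of Lemma \ref{lem:Case2Z1has-one-t}: the entire difficulty of the $\varphi_Z$-twisted conjugacy problem lives in the cases $t_b(Z_1)\in\{0,1\}$, and a local survival heuristic is not enough. (The one case where a global counting argument of roughly your flavour does work is $t_b(Z_1)>1$, and there the paper's Lemma \ref{lem:SylCase1} instead counts $b$-letters across the whole equation, obtaining $t_b(U)+t_b(V)\geq t_b(W)(t_b(Z_1)-1)$; even this is a mass count, not a block-survival count, and as the counterexample above shows your block count fails already here.) You would also still need the syllable-length bound of Lemma \ref{lem:SylLength}, which your ``parallel counts'' sketch does not supply; that step separately requires the no-complete-cancellation Lemma \ref{lem:completeCancellation}.
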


Our approach is as follows: We prove that there exists a computable bound $C$ on $|W|$, given in terms of the constants $P$, $k$ and $Z$, for any $W$ ending in a $b$-syllable such that (\ref{mainEq}) holds.
To do this, we first show the number of syllables in $W$ is bounded, and then that the lengths of syllables are bounded.
The brute-force algorithm of first computing the bound $C$ and then checking whether or not the equation holds for each $W\in F(a, b)$ of length $\leq C$ is our required algorithm. The restriction to those words $W$ ending in a $b$-syllable is sufficient, by Remark \ref{Wbsyllable}.

\p{The form of $\mathbf{Z}$}
As in the proof of Lemma
\ref{lem:fromOuterToActualALGORITHM}, if $\varphi_Z$ is injective then $Z$ is not a power of $a$, so can be written as $a^{q_0}Z_0^{-1}Z_1Z_0a^{q_1}$. %, and moreover no cancellation happens between any $Z_1$ and any other word when forming $W(\varphi_Z(a), \varphi_Z(b))$. 
 It is in fact sufficient to assume $q_0=0$: If $q_0\neq 0$ then decompose $Z$ as $a^{q_0}Z'$, where $Z'=Z_0^{-1}Z_1Z_0a^{q_1}$ starts with a $b$-syllable. Then $P=W(a, a^{q_0}Z')a^kW^{-1}$ if and only if $a^{-q_0}P=W(a, Z'a^{q_0})a^{k-q_0}W^{-1}$, and so $P$ and $a^k$ are $\varphi_Z$-twisted conjugate if and only if $a^{-q_0}P$ and $a^{k-q_0}$ are $\varphi_{Z'a^{q_0}}$-twisted conjugate, so we can consider this new problem instead. Therefore, as a freely reduced word, we shall assume $Z$ has the form
\begin{equation}
Z=Z_0^{-1}Z_1Z_0a^q\label{formOfZ},
\end{equation}
where $q\in \mathbb{Z}$ and $Z_0^{-1}Z_1Z_0$ begins and ends with $b$-syllables. Note that free reductions within $W(a,Z)$ will not be affect the $Z_1$'s.

\p{Notation}
By a \emph{long} syllable we mean a syllable of length $\geq 2$.
We will denote by $t_a(W)$ the total number of occurrences of $a$ or $a^{-1}$ in the freely reduced form of $W$, by $s_a(W)$ the total number of $a$-syllables, by $s_a^{(2)}(W)$ the total number of long $a$-syllables, and by $s(W)$ the total number of syllables.
For example, if $W=a^2 b a^{-1} b^3$ then $t_a(W)=3$, $s_a(W)=s_b(W)=2$, $s_a^{(2)}=s_b^{(2)}=1$, and $s(W)=4$.

%%%%%%%%%%%%%%%%%%%%%%%%%%%%%%%%%%%%%%%%%%%%%%%%%%%%%%%%%%%%%%%%%%%%%%%%%%%%
%%%-------------------------------------------------------------------------------------------------------------------------------------------------------------------------------------------------%%%
%%%-------------------------------------------------------------------------------------------------------------------------------------------------------------------------------------------------%%%
%%%------------------------------------------------bounding the number of syllables--------------------------------------------------------------------------------------------------%%%
%%%-------------------------------------------------------------------------------------------------------------------------------------------------------------------------------------------------%%%
%%%-------------------------------------------------------------------------------------------------------------------------------------------------------------------------------------------------%%%
%%%%%%%%%%%%%%%%%%%%%%%%%%%%%%%%%%%%%%%%%%%%%%%%%%%%%%%%%%%%%%%%%%%%%%%%%%%%

\p{Bounding the number of syllables}
We start by bounding the number of syllables in a solution $W$ to the $\varphi_Z$-twisted-conjugacy problem.
We first deal with the case when $q=0$ in the form $Z=Z_0^{-1}Z_0Z_0a^q$, as in (\ref{formOfZ}). We then consider the cases $t_b(Z_1)>1$, $t_b(Z_1)=1$, and $t_b(Z_1)=0$, each of which requires different methods.

%%%%%%%
%---------------%
%---Case 1----%
%-----q=0-----%
%---------------%
%%%%%%%

\begin{lemma}
\label{lem:Case1non-cycRed}
Suppose that $\varphi_Z$ is injective but not surjective, that $q=0$ in the form of $Z$, and $W(a,b)$ is as in (\ref{formOfW}).
If (\ref{mainEq}) holds then $s(W)\leq |P|+2$.
\end{lemma}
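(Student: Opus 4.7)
My plan is to derive a lower bound $|P| \geq s(W) - 2$ by an explicit length computation, which immediately gives $s(W) \leq |P| + 2$. The key point is that $\varphi_Z(W) a^k W^{-1}$ has essentially only one place where cancellation can occur, namely the junction between $\varphi_Z(W) a^k$ and $W^{-1}$, and the number of cancelling letter-pairs there is bounded by $|W|$.

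First I would note that, writing $Z = Z_0^{-1} Z_1 Z_0$ with $Z_1$ cyclically reduced, the expansion $\varphi_Z(W) = a^{i_1} Z^{j_1} a^{i_2} Z^{j_2} \cdots a^{i_n} Z^{j_n}$ is already freely reduced: since $q = 0$, the word $Z$ begins and ends with $b$-syllables, and each $a^{i_m}$ for $m \geq 2$ is a non-trivial $a$-syllable, so no cancellation occurs between consecutive blocks. Using $|Z^{j_m}| = 2|Z_0| + |j_m||Z_1|$ I obtain
\[
|\varphi_Z(W)| \;=\; |W| + 2n|Z_0| + (|Z_1|-1)\sum_{m=1}^n |j_m|.
\]
Both $\varphi_Z(W) a^k$ and $W^{-1}$ are reduced (the former because $Z^{j_n}$ ends in a $b$-syllable, so $a^k$ does not merge into it), so the number of cancelling letter-pairs $C$ in the concatenation satisfies $C \leq |W|$, giving
\[
|P| \;\geq\; |\varphi_Z(W)| + |k| + |W| - 2|W| \;=\; 2n|Z_0| + (|Z_1|-1)\sum_m |j_m| + |k|.
\]

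A short case analysis on the shape of $Z$ then completes the argument. If $|Z_0| \geq 1$ then $|P| \geq 2n \geq s(W)$; if $|Z_0| = 0$ and $|Z_1| \geq 3$ then $(|Z_1|-1)\sum_m |j_m| \geq 2n$, so again $|P| \geq s(W)$; and the case $|Z_0| = 0$, $|Z_1| = 1$ forces $Z = b^{\pm 1}$, so $\varphi_Z$ would be an automorphism, contradicting non-surjectivity. The only remaining case is $|Z_0| = 0$ and $|Z_1| = 2$, which by cyclic reducedness together with the condition that $Z_1$ starts and ends with $b$-syllables forces $Z = b^{\pm 2}$.

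This edge case is where the length bound is tight, and I would argue at the level of syllables instead. Here $\varphi_Z(W) = a^{i_1} b^{\pm 2 j_1} a^{i_2} \cdots a^{i_n} b^{\pm 2 j_n}$ has the same number of syllables as $W$, and at the junction the two $b$-syllables $b^{\pm 2 j_n}$ and $b^{-j_n}$ combine non-trivially into a single $b$-syllable (their exponents cannot sum to zero since $j_n \neq 0$). Because the neighbouring syllables on each side are $a$-syllables, no further cascade can occur, so $s(P) = s(\varphi_Z(W)) + s(W^{-1}) - 1 = 2s(W) - 1 \geq s(W)$, and $|P| \geq s(P) \geq s(W)$. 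I expect the main technical delicacy to be precisely this edge case, where the rigidity of $Z$ as a pure $b$-word must be used to rule out any further cascade cancellation at the junction.
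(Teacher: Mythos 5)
Your proof is correct and follows essentially the same strategy as the paper's: observe that cancellation in $\varphi_Z(W)a^kW^{-1}$ can occur only at the single junction between $\varphi_Z(W)a^k$ and $W^{-1}$, bound the amount that can cancel, and isolate the pure $b$-power case for $Z$ as the delicate one. Your explicit length formula $|\varphi_Z(W)| = |W| + 2n|Z_0| + (|Z_1|-1)\sum_m|j_m|$ combined with the cancellation bound $C\leq|W|$ is a cleaner entry point than the paper's direct syllable count, and your case analysis by $|Z_0|$, $|Z_1|$ correctly narrows the edge case to $Z=b^{\pm2}$. One small slip: in that edge case you write $s(P)=s(\varphi_Z(W))+s(W^{-1})-1$ as if the terminal $b$-syllables merge, which implicitly assumes $k=0$; when $k\neq0$ the $a^k$-syllable separates $b^{\pm2j_n}$ from $b^{-j_n}$ so no cancellation occurs at all, giving an even larger $s(P)$, and you should note this (as the paper does explicitly).
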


\begin{proof}
%We shall write $s_0:=s(Z_0)$ and $s_1:=s(Z_1)$.
Using the form (\ref{formOfW}) of $W$, after free reduction we have
\[
W(a, Z)=a^{i_1}Z_0^{-1}Z_1^{j_1}Z_0 \dots a^{i_n}Z_0^{-1}Z_1^{j_n}Z_0.
\]
In particular, $s(W(a, Z))\geq s(W)$.
As $Z$ starts and ends with $b$-syllables, we have two cases: either $Z$ is a power of $b$, so $Z=b^r$, or $s(Z)\geq3$.

Suppose that $Z=b^r$. Then $|r|>1$ as $\varphi_Z$ is non-surjective. Now, the only way cancellation can occur when we form $W(a, Z)a^kW^{-1}$ is if $k=0$, and then cancellation may occur between the end $b$-syllable of $W(a, Z)$, which is $b^{rj_n}$ and first $b$-syllable of $W^{-1}$, which is $b^{-j_n}$. As $|r|>1$, these two syllables do not completely cancel and so not further cancellation can occur. Therefore, $2s(W)-1\leq |P|$ and the inequality follows.

Suppose that $s(Z)\geq3$. Then $s(Z^{j_m})=2s(Z_0)+|j_m|s(Z_1)$ with $|j_m|\geq1$, and overall we have the following, where the last line is as $s(Z)=2s(Z_0)+s(Z_1)\geq3$:
\begin{align*}
s(W(a, Z))
&=\sum_{m=1}^n|i_m|+\sum_{q=1}^n(2s(Z_0)+|j_m|s(Z_1))\\
&\geq n-1 + n (2s(Z_0)+s(Z_1))\\
&\geq 4n-1.
\end{align*}
Now, $s(a^{-k}W^{-1}(a, b))\leq 2n+1$ and so, as $s(UV)\geq |s(U)-s(V)|$ and recalling that $s(W(a, Z))\geq s(W)$, we have:
\begin{align*}
|P|&\geq s(P)\\
&=s(W(a, Z)a^{-k}W^{-1}(a, b))\\
&\geq
|s(W(a, Z))-s(a^{-k}W^{-1}(a, b))|\\
&=
s(W(a, Z))-s(a^{-k}W^{-1}(a, b))\\
&\geq
(4n-1)-(2n+1)=2n-2.
\end{align*}
As $2n\geq s(W)$, we therefore have $|P|+2\geq s(W)$ as required.
\end{proof}

%%%%%%%%%
%-------------------%
%---Case 2--------%
%-----t_b>1-------%
%--------------------%
%%%%%%%%%
We now resolve the case of $t_b(Z_1)>1$. Our result here is for arbitrary words $U$ and $V$ as input, rather than just $U=P$ and $V=a^k$.

\begin{lemma}
\label{lem:SylCase1}
Suppose that $\varphi_Z$ is injective but not surjective, $t_b(Z_1)>1$ in the form of $Z$, and $W(a,b)$ is as in (\ref{formOfW}).
If $U, V, W\in F(a, b)$ are such that $U=\varphi_Z(W)VW^{-1}$, then $s(W)\leq 2(|U|+|V|)+1$.
\end{lemma}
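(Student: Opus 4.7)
The strategy is to count occurrences of $b^{\pm 1}$ in the equation $U = \varphi_Z(W) V W^{-1}$. Write $W = a^{i_1} b^{j_1} \cdots a^{i_n} b^{j_n}$ as in (\ref{formOfW}); if $n = 0$ then $s(W) \leq 1$ and the bound is trivial, so assume $n \geq 1$. Then $s(W) \leq 2n$ and $t_b(W) = \sum_{m=1}^n |j_m| \geq n$, so $s(W) \leq 2 t_b(W)$; hence it suffices to prove $t_b(W) \leq |U| + |V|$.

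The heart of the argument is the claim $t_b(\varphi_Z(W)) \geq 2\, t_b(W)$. Before any free reduction, $\varphi_Z(W) = a^{i_1} Z^{j_1} \cdots a^{i_n} Z^{j_n}$ contains $\sum_{m=1}^n |j_m| = t_b(W)$ occurrences of $Z_1^{\pm 1}$, each contributing $t_b(Z_1) \geq 2$ to the $b$-count. I plan to show that all of these $Z_1$-occurrences survive free reduction, which immediately gives the bound. Two $Z_1$'s inside a single $Z^{j_m}$ are separated by $Z_0 a^{\pm q} Z_0^{-1}$, while two $Z_1$'s across a boundary $Z^{j_m} a^{i_{m+1}} Z^{j_{m+1}}$ are separated by a combined bridge of the form $Z_0 a^{r} Z_0^{-1}$ (with $r$ depending on $q$, $i_{m+1}$, and the signs of $j_m, j_{m+1}$); these bridges are freely reduced because either $Z_0 \neq 1$ (so the $b$-syllables at the end of $Z_0$ and the start of $Z_0^{-1}$ block collapse) or else $r = \pm i_{m+1} \neq 0$ remains visible since $W$ is freely reduced.

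With $t_b(\varphi_Z(W)) \geq 2\, t_b(W)$ established, I apply the elementary inequality $t_b(AB) \geq t_b(A) - t_b(B)$ (which holds because free reduction removes $b$-letters only in matched pairs, so at most $\min(t_b(A), t_b(B))$ of them cancel) twice to $U = \varphi_Z(W) \cdot V \cdot W^{-1}$:
\[
t_b(U) \;\geq\; t_b(\varphi_Z(W)) - t_b(V) - t_b(W) \;\geq\; 2\,t_b(W) - t_b(V) - t_b(W) \;=\; t_b(W) - t_b(V).
\]
Rearranging and using $t_b \leq |\cdot|$ gives $t_b(W) \leq |U| + |V|$, and therefore $s(W) \leq 2\,t_b(W) \leq 2(|U| + |V|) \leq 2(|U|+|V|)+1$.

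The main obstacle will be verifying in complete detail that every $Z_1$-occurrence in $\varphi_Z(W)$ is protected from cancellation. This requires a case analysis based on the signs of consecutive $j_m, j_{m+1}$, on whether $Z_0 = 1$, and on whether $q = 0$; the hypotheses that $\varphi_Z$ is injective (ruling out $Z \in \langle a\rangle$), that $W$ is freely reduced (so $i_m \neq 0$ for $m \geq 2$), and that $Z_0^{-1} Z_1 Z_0$ begins and ends with $b$-syllables should together exclude every configuration in which two adjacent $Z_1$'s could partially or fully cancel.
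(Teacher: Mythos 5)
Your proposal is correct and follows essentially the same $b$-letter-counting strategy as the paper: lower-bound $t_b(\varphi_Z(W))$ by showing the $Z_1$-occurrences survive free reduction, then apply subadditivity of $t_b$ to the relation $U=\varphi_Z(W)VW^{-1}$ to deduce $t_b(W)\leq |U|+|V|$ and hence $s(W)\leq 2(|U|+|V|)$; the paper performs the same count after a cosmetic change of basis to $\{a,\,ba^{-q}\}$, which packages the $Z_0 a^{\cdot}Z_0^{-1}$ bridges more cleanly but is not logically essential. One small flaw in your verification sketch: when $Z_0\neq 1$ and the intermediate $a$-exponent vanishes (e.g.\ $q=0$ inside a block $Z^{j_m}$, or $q+i_{m+1}=0$ at a boundary with $j_m,j_{m+1}$ of the same sign), the bridge $Z_0 a^0 Z_0^{-1}$ collapses \emph{entirely} — the $b$-syllables of $Z_0$ and $Z_0^{-1}$ do not block cancellation, they cancel each other; the $Z_1$'s still survive, but for the different reason that the resulting adjacent $Z_1^{\pm 1}$'s then carry the same sign, and $Z_1^{\pm 2}$ is freely reduced because $Z_1$ is cyclically reduced, so the case analysis you defer must be organised around the signs of consecutive $j_m$'s rather than around whether $Z_0$ is trivial.
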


\begin{proof}
Rewrite $W$ as a word over $a$ and $ba^{-q}$, so $W(a, b)=W_0(a, ba^{-q})$, and note that $t_b(W)=t_b(W_0)$. The word $\varphi_Z(W_0(a, ba^{-q}))=W_0(a, Z_0^{-1}Z_1Z_0)$ is freely reduced as written, and so we have the following:
\begin{align*}
U&=\varphi_Z(W_0(a, ba^{-t}))VW^{-1}\\
UWV^{-1}&=W_0(a, Z_0^{-1}Z_1Z_0)  \\
t_b(UWV^{-1})&=t_b(W_0(a, Z_0^{-1}Z_1Z_0))\\
t_b(U)+t_b(W)+t_b(V)&\geq 2s_b(W_0)t_b(Z_0)+t_b(W_0)t_b(Z_1)\\
t_b(U)+t_b(V)&\geq t_b(W_0)(t_b(Z_1)-1)
\end{align*}
As $t_b(Z_1)>1$, we have $t_b(U)+t_b(V)\geq t_b(W_0)(t_b(Z_1)-1)\geq t_b(W_0)=t_b(W)$, so $2(|U|+|V|)\geq2(t_b(U)+t_b(V))\geq 2t_b(W)\geq 2s_b(W)\geq s(W)-1$, and the inequality $s(W)\leq 2(|U|+|V|)+1$ follows.
\end{proof}

%%%%%%%%%
%-------------------%
%---Case 2--------%
%-----t_b=1-------%
%--------------------%
%%%%%%%%%

Assuming $Z$ is as in (\ref{formOfZ}) and $W(a,b)$ is as in (\ref{formOfW}), we say that an $a$-syllable $a^{i_m}$, $m\geq 2$, of $W(a,b)$ is \emph{cancelling} in $W(a, Z)$ if either both $j_{m-1}$ and $j_m$ are positive and $i_m=-q$, or both $j_{m-1}$ and $j_m$ are negative and $i_m=q$. This means that within $Z^{j_{m-1}}a^{i_m}Z^{j_{m}}$, the entire $a^{i_m}$ cancels with the adjacent $a$-syllable of $Z^{j_{m-1}}$ or $Z^{j_{m}}$, which implies $Z_0$ and $Z_0^{-1}$ must cancel as well. We shall write $c$ be the number of cancelling $a$-syllables in the word $W(a,b)$.

We record now some identities needed later.
The word $Z_0^{-1}Z_1Z_0$ starts and ends with $b$-syllables and is freely reduced as written, so after free reduction $W(a, Z)$ may be viewed as a word over $a$, $Z_0$ and $Z_1$. Therefore, if $q\neq0$ in the form of $Z$ then:
\begin{equation}
s_{Z_1}(W(a,Z)) = t_b(W)\label{Z1SymmablesInWaZ}
\end{equation}
\begin{equation}
s_{Z_0}(W(a,Z)) = 2(t_b(W)-c)\label{Z0SymmablesInWaZ}
\end{equation}
Moreover, the $Z_0$ terms can never be adjacent and so we have:
\begin{equation}
s_{Z_0}^{(2)}(W(a, Z))=0\label{Z0LongSymmablesInWaZ}
\end{equation}
We usually apply identities (\ref{Z0SymmablesInWaZ}) and (\ref{Z0LongSymmablesInWaZ}) in tandem. For example, if $t_b(Z_1)=0$ then (\ref{Z0SymmablesInWaZ}) implies that $s_{b}(W(a,Z)) \leq 2s_b(Z_0)(t_b(W)-c)$, with the inequality coming from the fact that $s_b(Z_0^2)\leq2s_b(Z_0)$. By (\ref{Z0LongSymmablesInWaZ}), every pair of $Z_0$s is separated and so the inequality is in fact an equality, so here $s_{b}(W(a,Z)) = 2s_b(Z_0)(t_b(W)-c)$.

%%%%%%%%%%%
%--------------------------%
%----some notation----%
%--------------------------%
%%%%%%%%%%%

The following proofs also use some notation. We write $f(n) \sim g(n)$ if there exists a computable constant $C$ (computable from the words $P$ and $Z$ in the proofs) such that $|f(n) - g(n)|<C$, where $f(n)$ and $g(n)$ are non-negative-valued functions. This relation is transitive, so if $f(n) \sim g(n)$ and $g(n) \sim h(n)$ then
\begin{equation}
f(n) \sim h(n)\label{approx1}
\end{equation}
because $|f(n) - h(n)|\leq|f(n) - g(n)|+|g(n) - h(n)|$.
We can further apply these approximations, for example if $f(n)\sim g(n)$, $h(n)\sim k(n)$ then
%\begin{equation}
$x f(n)+y h(n)+z\sim x g(n)+y k(n)+z$ %\label{approx3}
%\end{equation}
for $x, y, z\in\mathbb{N}\cup\{0\}$, which follows from applying (\ref{approx1}) twice.
\begin{comment}%begin comment
Now, if $f(n)\sim g(h)$ and in addition $f(n)\geq g(h)$ then we write $f(n) \preceq g(n)$ (to mean ``almost less than''). For such functions $f$, $g$ and $h$, if $f(n) \geq g(n)\geq h(n)$ and $g(n) \preceq h(n)$ then
%\succeq
\begin{equation}
f(n) \preceq h(n).\label{approx3}
\end{equation}
\end{comment}%end comment

%%%%%%%%%%%
%--------------------------%
%------main lemma------%
%--------------------------%
%%%%%%%%%%%

In the following we settle the case $t_b(Z_1)=1$, with $Z_1$ cyclically reduced.
% and show the constant $D$ in the lemma is $2|P|\times(|Z|+1)$.
\begin{lemma}
\label{lem:Case2Z1has-one-t}
Suppose that $\varphi_Z$ is injective but not surjective, $Z=Z_0^{-1}a^{p_0}b^{\epsilon}a^{p_1}Z_0a^q$ with $\epsilon=\pm1$, $p_0p_1 \geq 0$, and $W(a,b)$ is as in (\ref{formOfW}).
If (\ref{mainEq}) holds then there exists a constant $D$, computable from $P$ and $Z$, such that $s(W)\leq D$.
\end{lemma}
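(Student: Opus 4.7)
The plan is to bound $n := s_b(W)$; since $s(W) \leq 2n + 1$, this will suffice.

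First I would dispose of the case $q = 0$ in the form \eqref{formOfZ} of $Z$: then $Z = Z_0^{-1} Z_1 Z_0$ is cyclically reduced and Lemma~\ref{lem:Case1non-cycRed} applies. So from now on I assume $q \neq 0$. Under this assumption I would observe first that $t_b(Z_0) \geq 1$: if $Z_0$ were a power of $a$, the requirements that $Z_0^{-1} Z_1 Z_0$ begins and ends in $b$-syllables, combined with $p_0 p_1 \geq 0$, would force $p_0 = p_1 = 0$ and $Z_0$ trivial, giving $Z = b^\epsilon a^q$ and making $\varphi_Z$ a Nielsen automorphism, contrary to non-surjectivity.

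The main quantitative step is a $b$-occurrence count for the reduced form of $W(a, Z)$, using identities \eqref{Z1SymmablesInWaZ}--\eqref{Z0LongSymmablesInWaZ}. Let $c \leq n - 1$ denote the number of cancelling $a$-syllables of $W$. Because $q \neq 0$ separates consecutive $Z$-blocks and no two $Z_0$-copies are ever adjacent, the $b$-syllables arising from different $Z_0$'s and $Z_1$'s in $W(a, Z)$ do not merge, giving up to a bounded correction
\[
t_b\bigl(W(a, Z)\bigr) \sim \bigl(1 + 2\, t_b(Z_0)\bigr)\, t_b(W) - 2\, t_b(Z_0)\, c.
\]
From \eqref{mainEq} the left-hand side is bounded above by $t_b(P) + t_b(W)$, and substituting yields $2\, t_b(Z_0)\bigl(t_b(W) - c\bigr) \lesssim t_b(P)$. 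Since $t_b(Z_0) \geq 1$, this produces a computable bound $t_b(W) - c \leq L_1$, depending only on $P$ and $Z$.

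This forces $c \geq t_b(W) - L_1 \geq n - L_1$, so $W$ must take a very rigid form: almost every $j_m$ shares a common sign, and almost every $a$-syllable $a^{i_m}$ equals $\mp q$. The remaining step is a refined count that extracts a linear-in-$n$ number of ``witness'' syllables surviving in $\varphi_Z(W) a^{-k} W^{-1}$. The plan is to track the $a^{p_0}$ and $a^{p_1}$ pieces of each $Z_1 = a^{p_0} b^\epsilon a^{p_1}$: when two $Z_1$-blocks become adjacent at a cancelling position, they merge (using $p_0 p_1 \geq 0$) into $a^{p_0} b^\epsilon a^{p_0 + p_1} b^\epsilon a^{p_1}$, whose interior $a$-syllables have length $|p_0 + p_1|$, in general different from the length $|q|$ of the non-initial $a$-syllables of $W^{-1}$ in the rigid regime. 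These witnesses cannot be absorbed into $W^{-1}$ and so must appear in $P$, giving the bound $n \leq D$ for a computable $D$. The main obstacle is precisely this last step: one has to rule out the knife-edge case where the witness $a$-syllables happen to have length exactly $|q|$ and the flanking $b$-syllables line up for cascading cancellation, for which one again invokes non-surjectivity of $\varphi_Z$ (to exclude $Z$ that would make $\varphi_Z$ an inner automorphism) together with $p_0 p_1 \geq 0$, ensuring a computable positive fraction of witnesses survive.
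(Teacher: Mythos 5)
Your first part is essentially the paper's: the $b$-occurrence count giving $t_b(W)-c$ bounded, hence $c\sim t_b(W)\sim s_b(W)$, almost all $j_m$ of one sign, almost all $a^{i_m}$ equal to $\mp q$. But your second part has two genuine gaps.

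First, you do not handle $p_0=p_1=0$ at all: here the merged block $a^{p_0}b^\epsilon a^{p_0+p_1}b^\epsilon a^{p_1}$ has no interior $a$-syllable (the two $b^\epsilon$'s coalesce), so your ``witness'' disappears. The paper resolves this subcase separately by counting $a$-syllables in $W(a,Z)$ and showing $s_a(W)\sim 0$.

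Second, and more seriously, you treat $|p_0+p_1|=|q|$ as a ``knife-edge'' exceptional case, but the paper shows it is the \emph{forced} situation when $W$ is long: almost all $a$-syllables of $W(a,Z)$ have length $|p_0+p_1|$ and almost all $a$-syllables of $W^{-1}$ have length $|q|$, and for the two sides of \eqref{mainEq} to cancel these must match, so $|p_0+p_1|=|q|$. Your plan therefore never gets off the ground in the main case, and the appeal to ``non-surjectivity together with $p_0p_1\geq 0$ ensuring a computable fraction of witnesses survive'' is where the actual work is hidden. The paper fills this gap with a concrete bouncing argument: it refines the non-cancelling notion (an $a$-syllable inside $Z$ has ``non-cancelling properties'' if it is $\neq a^{\pm q}$, \emph{or} is $a^{\pm q}$ but flanked by $b$-syllables of opposite sign — the second clause is what saves you when $p_1=\pm q$ and $p_0=0$), establishes that $a^{p_1}$ or $a^{p_0}$ always has this property, and then propagates back and forth between $W(a,Z)$ and $W^{-1}(a,b)$ to prove that among any $s_a(Z_0)+2$ consecutive $a$-syllables of $W$ at least one is non-cancelling. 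This gives $c\lneq s_b(W)-s_b(W)/\alpha$, which together with $c\sim s_b(W)$ bounds $s_b(W)$. Your proposal has the right shape (cancelling $\approx$ all, then extract surviving witnesses) but omits the refined non-cancelling notion and the propagation argument, which are exactly what make the bound work when $|p_0+p_1|=|q|$.
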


\begin{proof}
%Here $Z$ has the form
%\[
%Z=Z_0^{-1}a^{p_0}b^{\epsilon}a^{p_1}Z_0a^q,
%\]
%where $\epsilon=\pm1$, and $p_0p_1 \geq 0$ since $Z_1=a^{p_0}ba^{p_1}$ is cyclically reduced.

%We shall prove that $s_a(W)\sim0$, which is sufficient.
Let $\beta:=t_b(Z_0)\geq 1$, with the inequality holding as $\varphi_Z$ is non-surjective.

The proof consists of two parts. In the first part we resolve the case of $p_0=0=p_1$, and for the remaining cases we show that `most' $a$-syllables in $W(a,b)$ are cancelling, and in the second part we show that a positive proportion of the $a$-syllables in $W(a,b)$ are non-cancelling. The two parts together lead to the bound on $s(W)$, as follows. Let $c$ be the number of cancelling $a$-syllables. In the first part we prove $c \sim s_b(W)$ (and clearly $s_b(W)\geq s_a(W)\geq c$), while in the second part we will show $c<s_b(W)-s_b(W)/\alpha$, where $\alpha:=s_a(Z_0)$. These two facts together give $s_b(W)\sim 0$, and so $s(W)$ is bounded by a computable constant.

We proceed with the first part of the proof and view $W(a, Z)$ as a word over $a$, $Z_0$ and $Z_1$; we have that $t_b(W(a,Z))=(\beta-\lambda) s_{Z_0}(W(a,Z))+s_{Z_1}(W(a,Z))$, where $\lambda\in\{0, 1, 2\}$ records the possibility that the final syllable of $Z_0$ is a $b$-syllable which merges with the initial or terminal $b$-syllable of $Z_1^{j_m}$. Hence, by (\ref{Z1SymmablesInWaZ}), (\ref{Z0SymmablesInWaZ}) and (\ref{Z0LongSymmablesInWaZ}) we have that $t_b(W(a,Z))=2(\beta-\lambda)(t_b(W)-c)+t_b(W)$. Now, by (\ref{mainEq}) we have $|t_b(W)-t_b(W(a, Z))|\leq t_b(P)$, so $t_b(W)\sim t_b(W(a, Z))$, and then combining this with the first identity and simplifying (and using the fact that $\beta$ and $\lambda$ are computable constants) we have that $t_b(W)\sim c$. As $c\leq s_a(W)\leq s_b(W)\leq t_b(W)$, we further have $s_a(W)\sim c$ and $s_b(W)\sim c$. The approximations $t_b(W)\sim c \sim s_b(W)$ also imply that $|j_m|=1$ for all $1\leq m \leq n$, up to $|P|$ of them.

Assume that $p_0=0=p_1$. There are two ways an $a$-syllable of $W(a, Z)$ can occur: either as a non-cancelling $a$-syllable of $W$, or within a $Z_0$-syllable. There are $s_a(W)-c$ syllables of the first kind, and $2s_a(Z_0)(t_b(W)-c)$ of the second form. Together with $s_a(W)-c\sim0$ and $t_b(W)-c\sim0$ from the above paragraph, we have
$s_a(W(a, Z))=(s_a(W)-c)+2s_a(Z_0)(t_b(W)-c)\sim0$.
As (\ref{mainEq}) holds, $s_a(W)\sim s_a(W(a, Z))$, which we can combine with the above, via (\ref{approx1}), to get $s_a(W)\sim0$, so $s_a(W)$ is bounded by a computable constant, as required. Hence, this case is resolved.

Assume that $p_0\neq0$ or $p_1\neq0$. Since $s_a(W)\sim s_a(W(a, Z)) \sim c$, all (up to a constant number of) $a$-syllables in $W$ are cancelling; moreover, we have seen that $|j_m|=1$ for all $1\leq m \leq n$ (except at most $|P|$ of them), so all (up to a constant) $Z_0$-syllables will cancel out in $W(a,Z)$. Then most $a$-syllables in the reduced form of $W(a,Z)$ will have the form $a^{\pm (p_0+p_1)}$. On the other hand, most $a$-syllables in $W$ are cancelling, that is, of the form $a^{\pm q}$. To get (\ref{mainEq}), sufficiently many of the $a$-syllables in $W(a,b)$ and $W(a,Z)$ must match, so therefore $|p_0+p_1|=|q|$.

In the second part of the proof we show that %, given the above observation that $|p_0+p_1|=|q|>0$, 
among any $s_a(Z_0)+2$ consecutive $a$-syllables in $W(a,b)$, at least one is non-cancelling. We extend the non-cancelling notion to say that an $a$-syllable in $W(a,Z)$ which is part of a $Z$-syllable will have \emph{non-cancelling properties} if it is either $\neq a^{\pm q}$, or if equal to $a^{\pm q}$, the neighbouring $b$-syllables have different signs. The relatively frequent occurrence of non-cancelling $a$-syllables, which we prove below, is initiated by the fact that the last $Z$-syllable in $W(a,Z)$ contains some $a$-syllable with non-cancelling properties, and this syllable must match an $a$-syllable in $W^{-1}(a,b)$ in order to satisfy (\ref{mainEq}): that is, a non-cancelling $a$-syllable $a^{i_t}$ in $W^{-1}(a,b)$ must exist in a position $t$ that can be approximated. The non-cancelling $a$-syllable $a^{i_t}$ in $W^{-1}(a,b)$ will in turn show the existence of an $a$-syllable with non-cancelling properties in $W(a,Z)$ in a position that can be approximated, based on $t$ and $s_a(Z_0)$; by continuing this process we show there are (relatively) frequent non-cancelling $a$-syllables in $W(a,b)$.

Write $\alpha:=s_a(Z_0)$. We may assume $Z_0$ starts and ends with $b$-syllables, as otherwise we can change the $p_0$ and $p_1$ accordingly.
We start with the last $Z$-syllable of $W(a,Z)$, $Z^{j_n}$ (assume wlog $j_n>0$); this cancels in the RHS of (\ref{mainEq}), or otherwise $W^{-1}(a,b)$ will stay intact in the RHS of (\ref{mainEq}) and we easily get $|W|<|P|$. We also assume $k=-q$ as otherwise $W^{-1}(a,b)$ and $W(a,Z)$ do not cancel and the bound on $|W|$ follows. Since the suffix $a^{p_0}b^{\epsilon}a^{p_1}Z_0a^q$ of $Z^{j_n}$ cannot cancel within $W(a,Z)$ (because the $Z_1$'s are not affected by cancellations), it must cancel with $a^kW^{-1}(a,b)$. We assume first that $p_1\neq 0$ and claim that $a^{p_1}$ has non-cancelling properties: if $p_1\neq \pm q$ this is immediate, and if $p_1= \pm q$ then either $p_0=0$ or $p_0=\pm 2q$ since $|p_0+p_1|=|q|$. If $p_0=0$, this which forces the $b$-syllables to the left ($b^{\epsilon}$) and right ($Z_0$) of $a^{p_1}$ to have different signs in order for $Z$ as in the hypothesis to be freely reduced. If $p_0=\pm 2q$, then we can take it as the starting non-cancelling $a$-syllable to be used in the approach below instead of $p_1$. Similarly, if $p_1=0$, we can use $p_0$ as the non-cancelling $a$-syllable at the beginning of the process. The details in all cases follow the pattern below.

We start with $a^{p_1}$, which has non-cancelling properties and is the $\alpha+1$st $a$-syllable when counting the $a$-syllables in $W(a,Z)$ from right to left. As it must cancel with the $\alpha+1$st $a$-syllable in $W^{-1}(a,b)$, we get $a^{i_{n-\alpha}}=a^{p_1}$.
Bouncing back to $W(a,Z)$, let $n-f$ be the position corresponding to $a^{i_{n-\alpha}}$ among the roughly $n$ $a$-syllables in the reduced form of $W(a,Z)$; even if $a^{i_{n-\alpha}}$ gets multiplied with $a^{\pm q}$ from within $Z$, the resulting $a$-syllable is non-cancelling, and so $a^{i_{n-f}}$ must be non-cancelling in $W^{-1}(a,b)$. 
If we suppose all $a$-syllables after $a^{i_{n-\alpha}}$ are cancelling, the number of $a$-syllables in the reduced form of $a^{i_{n-\alpha+1}}Z^{j_{n-\alpha+1}} \dots a^{i_n}Z^{j_n}$ is $\geq 2\alpha+(\alpha+2)=3\alpha+2$, where $2\alpha$ represents the number of $a$-syllables in the $Z^{-1}_0$ and $Z_0$ (which are guaranteed to not cancel in the prefix of $Z^{j_{n-\alpha+1}}$ and suffix of $Z^{j_n}$), and $\alpha+2$ counts the remaining $a$-syllables from the $Z_1$s. So $f\geq 3\alpha+2$. Since any non-cancelling $a^{i_{m}}$ in $W(a,Z)$ is followed by a $Z^{-1}_0$ and a non-cancelling $a^{\pm p_i}$ (part of $Z_1$), we get that $a^{i_{n-f+\alpha}}=a^{i_{n-2\alpha-2}}$ is also non-cancelling. If $f=3\alpha+2$ then both $a^{i_{n-\alpha}}$ and $a^{i_{n-2\alpha-2}}$ are non-cancelling, and they are distance $\alpha+2$ apart, so this is the beginning of the behaviour stated in the claim. Then the syllable $a^{i_{n-2\alpha-2}}$ will imply the existence of a non-cancelling syllable $a^{i_{n-2\alpha-2- f'}}$, and this process continues. 
Now if $f>3\alpha+2$ then there is at least another non-cancelling syllable between $a^{i_n}$ and $a^{i_{n-\alpha}}$, say $a^{i_{n-\alpha'}}$, with $\alpha'>\alpha$ maximal, so we use the same process for $\alpha'$. For example, if $5\alpha>f>3\alpha+2$, then there is one non-cancelling $a$-syllable between $a^{i_n}$ and $a^{i_{n-\alpha}}$, so between $a^{i_n}$ and $a^{i_{n-\alpha}}$ there are at least $4$ occurrences of $Z_0^{\pm 1}$, each followed or preceded by non-cancelling $a$-syllables, and an easy computation shows that there is a non-cancelling $a^{i_{n-\alpha'}}$ with $\alpha'=2\alpha+\delta$, where $|\delta|\leq 2$. For higher values of $f$ we get more non-cancelling syllables between $a^{i_n}$ and $a^{i_{n-\alpha}}$, but the computations will show that $\alpha'=2\alpha+\delta$, where $|\delta|\leq 2$. Following this strategy of identifying non-cancelling syllables in $W(a,b)$ and their counterparts in $W(a,Z)$, we get the claim.

\end{proof}

%%%%%%%%%
%-------------------%
%---Case 2--------%
%-----t_b=0-------%
%--------------------%
%%%%%%%%%
Our last main case is when $Z_1$ in (\ref{formOfZ}) satisfies $t_b(Z_1)=0$. Here, $Z_1$ is a non-trivial power of $a$ so we shall write $Z=Z_0^{-1}a^rZ_0a^q$, $r\neq0$. We have an ambient assumption that $\varphi_Z$ is injective, so we additionally have that $s_b(Z_0)\geq1$.
There are two cases to consider here: when $Z_0$ contains a long $b$-syllable and when it does not. In both cases we compare the ``pattern'' of the $b$-syllables in $W(a, b)$ with those in $W(a, Z)$.

%%%%%%%%%
%-------------------%
%---Case 2--------%
%---First lemma---%
%--Long syllables-%
%--------------------%
%%%%%%%%%

We now consider the case when $s_b^{(2)}(Z_0)\geq1$ in the form of $Z$, so when $Z_0$ contains a long $b$-syllable. In the following, the constant $E$ may be computed to be $|Z|\times|P|$.

\begin{lemma}
\label{lem:SylCase2a}
Suppose that $\varphi_Z$ is injective but not surjective, $Z=Z_0^{-1}a^rZ_0a^q$ for $r, q\neq0$, and $W(a,b)$ is as in (\ref{formOfW}).
Let $s_b^{(2)}(Z_0)\geq1$.
If (\ref{mainEq}) holds then there exists a constant $E$, computable from $P$ and $Z$, such that $s(W)\leq E$.
\end{lemma}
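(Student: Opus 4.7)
The plan is to count $b$-syllables in both sides of \eqref{mainEq}, exploiting the fact that the long $b$-syllable guaranteed in $Z_0$ persists, as a recognisable fingerprint, in each surviving copy of $Z_0^{\pm 1}$ inside $W(a,Z)$. Expanding
\[
W(a, Z) = a^{i_1} Z^{j_1} a^{i_2} Z^{j_2} \cdots a^{i_n} Z^{j_n},
\]
each factor $Z^{j_m}$ contains $2|j_m|$ copies of $Z_0^{\pm 1}$, successive copies being separated by the non-trivial syllables $a^{\pm r}$ and $a^{\pm q}$; since $r, q \neq 0$, no $Z_0$-pair cancels internally. The only cancellations of $Z_0$-pairs occur at the $n-1$ boundaries between consecutive factors $Z^{j_m}, Z^{j_{m+1}}$, and at each such boundary at most one pair $Z_0 Z_0^{-1}$ cancels; moreover, such a cancellation requires the signs of $j_m, j_{m+1}$ to agree and forces $i_{m+1}$ to equal a specific value ($\pm q$ or $\pm r$, depending on the signs).

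Writing $c'$ for the number of cancelling boundaries, the reduced $W(a, Z)$ therefore contains exactly $2(t_b(W) - c')$ surviving copies of $Z_0^{\pm 1}$, and no $b$-syllables merge across the surviving non-cancelling boundaries (the separating $a$-syllable is non-trivial there). Hence
\[
s_b(W(a, Z)) = 2 s_b(Z_0)(t_b(W) - c') \quad \text{and} \quad s_b^{(2)}(W(a, Z)) \geq 2 s_b^{(2)}(Z_0)(t_b(W) - c').
\]
On the right-hand side, the $b$-syllables of $W$ sit in the interior of $P W a^{-k}$ and are only affected by possible cancellation with $P$, so $s_b(P W a^{-k}) = n + O(|P|)$ and $s_b^{(2)}(P W a^{-k}) \leq s_b^{(2)}(W) + O(|P|)$. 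Equating the two $b$-syllable counts gives $2 s_b(Z_0)(t_b(W) - c') \sim n$, and then combining with the long $b$-syllable inequality, and using the hypothesis $s_b^{(2)}(Z_0) \geq 1$, yields $s_b^{(2)}(W) \gtrsim n/s_b(Z_0) - O(|P|)$, i.e.\ a definite fraction of the $b$-syllables of $W$ must themselves be long.

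From this point I would exploit the very restrictive structure imposed on $W$: most boundaries of $W(a,Z)$ must be cancelling, hence most $i_{m+1}$ are forced to the single value appropriate to the signs of $j_m, j_{m+1}$, and the $\epsilon_m = \operatorname{sign}(j_m)$ cannot change often. Combined with the abelianisation identity $\sigma_a(W(a,Z)) = \sigma_a(P W a^{-k})$, which gives $k = \sigma_a(P) - (r+q)\sigma_b(W)$ and thus pins down $k$ in terms of $W$, a length comparison via $|W(a,Z)| \leq |P| + |W| + |k|$ then produces the required bound $s(W) \leq E$ with $E$ computable from $P$ and $Z$. The main obstacle is the degenerate case $r + q = 0$, when the abelianisation identity is vacuous in $k$ and one must instead argue directly by decomposing $W$ into maximal blocks of constant sign of $\epsilon_m$ and applying the counting argument block-by-block to the accumulated $a$-exponents appearing between non-cancelling boundaries.
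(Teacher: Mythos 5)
Your opening moves are essentially the paper's: you count $b$-syllables and long $b$-syllables in both sides of \eqref{mainEq}, exploiting the fact that every surviving copy of $Z_0^{\pm 1}$ in $W(a,Z)$ deposits its $s_b^{(2)}(Z_0)\geq 1$ long $b$-syllables intact (separated by the nonzero $a$-powers $a^{\pm r}, a^{\pm q}$), and you correctly arrive at the lower bound $s_b^{(2)}(W) \gtrsim n/s_b(Z_0) - O(|P|)$. Up to here you agree with the paper, which establishes $s_b^{(2)}(W) = 2\gamma(t_b(W)-c)+D_1$ with $\gamma := s_b^{(2)}(Z_0)$ and $D_1$ bounded.

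The gap is in how you close the argument. The paper closes it by a letter-versus-syllable \emph{double count}: since every $b$-syllable contributes at least one $b$-letter and every \emph{long} $b$-syllable contributes at least one extra, $t_b(W) \geq s_b(W) + s_b^{(2)}(W) = n + 2\gamma(t_b(W)-c)+D_1$. Combined with the elementary observation $c \leq s_a(W) \leq s_b(W) = n$, this forces $(2\gamma-1)(t_b(W)-s_b(W)) \leq -D_1$, so $t_b(W)\sim s_b(W)$, hence $s_b^{(2)}(W)\sim 0$; feeding this back gives $s_{Z_0}(W(a,Z))\sim 0$, hence $s_b(W(a,Z))\sim 0$, hence $s(W)\sim 0$. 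In other words, your lower bound $s_b^{(2)}(W)\gtrsim n/\beta$ is meant to be played off against an \emph{upper} bound that you never derive. Without the double-count you only know that a positive fraction of the $b$-syllables of $W$ are long, which by itself does not bound $n$.

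Your sketched continuation does not substitute for this. Pinning down $k$ via $\sigma_a(W(a,Z))=\sigma_a(PWa^{-k})$ gives $k$ in terms of $\sigma_b(W)$, which is precisely the kind of quantity you are trying to bound, so the step is circular; the ensuing comparison $|W(a,Z)|\leq |P|+|W|+|k|$ is the triangle inequality in the unhelpful direction (the LHS is expected to grow under $\varphi_Z$) and does not yield a bound on $s(W)$. You already flag that the case $r+q=0$ breaks this route, but even for $r+q\neq 0$ there is no valid closing argument here. The missing idea is the double count $t_b(W)\geq s_b(W)+s_b^{(2)}(W)$ together with $c\leq s_b(W)$ — once you add these two inequalities to your setup, the bound $s(W)\leq E$ falls out directly and there is no case split on $r+q$.
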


\begin{proof}
Write $\beta:=s_b(Z_0)$ and $\gamma:=s_b^{(2)}(Z_0)$, so $\beta\geq\gamma\geq1$.
%We claim that $s_b(W(a, Z))\leq\frac{7\beta}{3\gamma}|P|$.
We firstly claim that $s_b(W(a, Z))\sim0$.
To verify this, first observe that $s_b^{(2)}(W(a,Z))=\gamma s_{Z_0}(W(a,Z))=2\gamma(t_b(W)-c)$, by (\ref{Z0SymmablesInWaZ}) and (\ref{Z0LongSymmablesInWaZ}).
Writing $W(a, Z)=P_1\circ Q$ and $a^kW^{-1}(a, b)=Q^{-1}\circ P_2$ where $P=P_1\circ P_2$, there exist integers $\lambda_1, \lambda_2, D_1$ with $\lambda_1, \lambda_2\in\{-1, 0, 1\}$ and $|D_1|\leq|P|+2$.
%The -1 is because ab^2a has 2 long b-syllables while ab and ba have 0 each.
such that:
\begin{align*}
s_b^{(2)}(P_1)+s_b^{(2)}(Q)&=s_b^{(2)}(W(a, Z))+\lambda_1\\
&=2\gamma(t_b(W)-c)+\lambda_1\\
s_b^{(2)}(Q)+s_b^{(2)}(P_2)&= 2\gamma(t_b(W)-c)-s_b^{(2)}(P_1)+s_b^{(2)}(P_2)+\lambda_1\\
s_b^{(2)}(W)&=2\gamma(t_b(W)-c)-s_b^{(2)}(P_1)+s_b^{(2)}(P_2)+\lambda_1-\lambda_2\\
&=2\gamma(t_b(W)-c)+D_1
\end{align*}
Now, we can double-count long $b$-syllables to get that $t_b(W)\geq s_b(W)+s_b^{(2)}(W)= s_b(W)+2\gamma(t_b(W)-c)+D_1$.
Reworking this inequality gives $t_b(W) \sim s_b(W)$,
%Together with the fact that $t_b(W)\geq n$, and that $n=s_b(W)$, we get:
%\begin{equation}
%0\leq t_b(W)- s_b(W)\leq|P|.\label{lem:SylCase2a:maineqn}
%\end{equation}
which gives $s_b^{(2)}(W)\leq t_b(W)- s_b(W)\sim0$. Therefore $s_b^{(2)}(W(a, Z))\sim0$ as follows:
\begin{align*}
s_b^{(2)}(W(a, Z))
&=s_b^{(2)}(P_1)+s_b^{(2)}(Q)-\lambda_1\\
&=s_b^{(2)}(P_1)+s_b^{(2)}(W)-s_b^{(2)}(P_2)-\lambda_1+\lambda_2\\
&\sim0
\end{align*}
Now, by (\ref{Z0SymmablesInWaZ}) and (\ref{Z0LongSymmablesInWaZ}), we have that $s_b(W(a, Z))=\beta s_{Z_0}(W(a, Z))$ and $s_b^{(2)}(W(a, Z))=\gamma s_{Z_0}(W(a, Z))$, to which we can apply $s_b^{(2)}(W(a, Z))\sim0$ to give $\frac{\gamma}{\beta}s_b(W(a, Z))=s_b^{(2)}(W(a, Z))\sim0$. Hence, $s_b(W(a, Z))\sim0$ and our claim is proven.

As (\ref{mainEq}) holds we have $s(W)\sim s_b(W(a, Z))$, which combines with $s_b(W(a, Z))\sim0$ via (\ref{approx1}) to get that $s(W)\sim0$, and the result follows.
\end{proof}

%%%%%%%%%
%-------------------%
%---Case 2--------%
%-Second lemma--%
%-no long syllables-%
%--------------------%
%%%%%%%%%
Finally, we consider the case when $s_b^{(2)}(Z_0)=0$ in the form of $Z$.
We require the following fact: If $f(n)$, $g(n)$, $h(n)$ and $k(n)$ are non-negative-valued functions with $f(n)\sim g(n)$, $h(n)\sim k(n)$ and $f(n)\geq h(n)$ then there exists a computable constant $C$ such that
\begin{equation}
g(n)+C\gneq k(n)\label{approx2}
\end{equation}
(here, $C$ is the sum of the constants bounding $|f(n)-g(n)|$ and $|h(n)-k(n)|$).
In the following, the constant $F$ may be computed to be $12|Z|\times|P|$.

\begin{lemma}
\label{lem:SylCase2b}
Suppose that $\varphi_Z$ is injective but not surjective, $Z=Z_0^{-1}a^rZ_0a^q$ for $q, r\neq0$, and $W(a,b)$ is as in (\ref{formOfW}).
Let $s_b^{(2)}(Z_0)=0$.
If (\ref{mainEq}) holds then there exists a constant $F$, computable from $P$ and $Z$, such that $s(W)\leq F$.
\end{lemma}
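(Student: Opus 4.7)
The plan is to adapt the strategy of Lemma \ref{lem:SylCase2a} to the degenerate case $s_b^{(2)}(Z_0)=0$, where the long-$b$-syllable invariant used there vanishes identically. Since every $b$-syllable of $Z_0$ (and hence of $Z$ and $Z^{-1}$) has length one, and consecutive copies of $Z^{\pm 1}$ occurring in $W(a,Z)$ are always separated by non-trivial $a$-segments coming from $a^q$, $a^r$, and the $a$-letters adjacent to the terminal $b$-syllables of $Z_0^{\pm 1}$, no two $b$-syllables can merge when one reduces $W(a,Z)$. Consequently $s_b^{(2)}(W(a,Z))=0$ and $t_b(W(a,Z))=s_b(W(a,Z))$.

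Next I transfer this property back to $W$ through (\ref{mainEq}) by the boundary-cancellation estimate used elsewhere in this section: a suffix of $W(a,Z)$ and a prefix of $a^kW^{-1}$ cancel entirely (plus at most one merged syllable), so $|s_b^{(j)}(W)-s_b^{(j)}(W(a,Z))|\leq s_b(P)+O(1)$ for $j=1$ or $j=2$. Combined with the previous step this gives $s_b^{(2)}(W)\sim 0$, hence $t_b(W)\sim s_b(W)$. Applying (\ref{Z0SymmablesInWaZ}) and (\ref{Z0LongSymmablesInWaZ}) then yields
\[
(2\beta-1)\,s_b(W)\ \sim\ 2\beta c,
\]
where $\beta:=s_b(Z_0)\geq 1$ (non-zero since $\varphi_Z$ injective forbids $Z_0$ from being a pure power of $a$) and $c$ is the number of cancelling $a$-syllables of $W$.

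The main obstacle, and what genuinely distinguishes this case from Lemma \ref{lem:SylCase2a}, is that the displayed relation alone permits $s_b(W)$ to grow provided $c$ grows in proportion; the ratio trick of Lemma \ref{lem:SylCase2a} collapses because $\gamma=s_b^{(2)}(Z_0)=0$. To close the gap I plan to invoke a syllable-tracking argument in the spirit of Lemma \ref{lem:Case2Z1has-one-t}, exploiting the palindromic internal shape of $Z=Z_0^{-1}a^rZ_0a^q$. A cancelling $a^{\pm q}$ in $W$ only cancels one pair $Z_0,Z_0^{-1}$ across it, and any further propagation requires the next $a$-syllables of $W$ on both sides to reproduce, in order, the internal exponents $s_1,\dots,s_{k-1}$ of $Z_0$; bouncing iteratively between the syllables of $W$ and of $W(a,Z)$ as in the proof of Lemma \ref{lem:Case2Z1has-one-t} forces at least a positive proportion $\eta>0$ of the $a$-syllables of $W$ to be non-cancelling, where $\eta$ is effectively computable from $Z$.

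Feeding $c\leq (1-\eta)s_a(W)$ together with $s_a(W)\sim s_b(W)$ (which follows from $s_b^{(2)}(W)\sim 0$) back into the displayed relation converts it into an inequality of the form $c_1\,s_b(W)\leq c_2$ for explicit computable constants $c_1>0$ and $c_2$ depending on $P$ and $Z$, which bounds $s(W)$ by the required constant $F$. The technical heart of the argument will be carrying out this bounce-back analysis carefully enough to secure $2\beta\eta>1$ (so that $c_1>0$); I expect this to need finer case analysis than the crude $1/(s_a(Z_0)+2)$ density produced in Lemma \ref{lem:Case2Z1has-one-t}, because here the palindromic block $Z_0^{-1}\cdot a^r\cdot Z_0$ provides multiple $b$-syllables per copy which must all be accounted for simultaneously.
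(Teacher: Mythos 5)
Your setup through the relation $(2\beta-1)\,s_b(W)\sim 2\beta c$ is correct and matches the paper's preliminary computations (the paper records this as $c\sim n\frac{2\beta-1}{2\beta}$ with $n=s_b(W)$). You also correctly identify that this relation alone does not close the argument, because the ratio trick from Lemma~\ref{lem:SylCase2a} degenerates when $\gamma=0$. The difficulty is that your proposed closing step does not work.

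You plan to bound $c$ away from $s_a(W)$ by a bounce-back argument in the style of Lemma~\ref{lem:Case2Z1has-one-t}, securing a density $\eta>0$ of non-cancelling $a$-syllables, and then need $2\beta\eta>1$. However, the bounce-back mechanism in that lemma uses the single $b$-letter inside $Z_1$ as the ``seed'' non-cancelling $a$-syllable that propagates. Here $Z_1=a^r$ has no $b$-letter, so that mechanism has nothing to bounce off; and even taking the density $\eta\approx 1/(s_a(Z_0)+2)$ from there at face value and noting $s_a(Z_0)$ differs from $\beta=s_b(Z_0)$ by at most one (since $Z_0$ alternates syllables), one gets $\eta\approx 1/(\beta+2)$, which fails the requirement $\eta>1/(2\beta)$ for $\beta\leq 2$. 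You acknowledge needing a ``finer case analysis'' but do not supply it, and there is no obvious refinement.

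The paper's actual route avoids density bounds on non-cancelling syllables entirely. It instead compares \emph{sign changes} of the sequence of $b$-exponents on the two sides of~(\ref{mainEq}). On the side $W(a,b)$, a cancelling $a$-syllable $a^{i_m}$ by definition has $j_{m-1}j_m>0$, so the $b$-exponent sequence can change sign at most $n-c\sim n/(2\beta)$ times. On the side $W(a,Z)$, the copies of $Z_0^{\pm 1}$ necessarily alternate sign, forcing at least $s_{Z_0}(W(a,Z))-1\sim n/\beta$ sign changes. Writing $W(a,Z)=P_1\circ Q$ and $a^kW^{-1}=Q^{-1}\circ P_2$ with $P=P_1\circ P_2$, the common block $Q$ must have both $\gtrsim n/\beta - |P|$ and $\lesssim n/(2\beta)$ sign changes, yielding $n/(2\beta)\leq O(1)$ and hence the required bound on $s(W)$. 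You should replace the incomplete bounce-back step with this sign-change comparison.
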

\begin{proof}
Let $\beta:=s_b(Z_0)$.
We start by approximating the numbers $t_b(W)$ (of occurrences of $b$) and $c$ (of cancelling syllables) in terms of $s_b(W)=n$ (number of $b$-syllables) of $W$:
\begin{align}
t_b(W)&\sim n\label{SylCase2c:id1}\\
c &\sim n \frac{2 \beta -1}{2\beta}\label{SylCase2c:id2}
\end{align}
To verify these, note that by (\ref{Z0SymmablesInWaZ}) and (\ref{Z0LongSymmablesInWaZ}) we have $s_b^{(2)}(W(a, Z))=2s_b^{(2)}(Z_0)(t_b(W)-c)=0$.
As (\ref{mainEq}) holds, $W(a,Z)$ cancels with `most' of $a^{k}W^{-1}(a,b)$,
and so combined with $s_b^{(2)}(W(a, Z))=0$ we have that at most $|P|$ $b$-syllables in $W(a, b)$ have length $\geq2$, and furthermore the sum of the lengths of these long syllables is at most $|P|$.
This implies that $t_b(W)\sim n$, and so (\ref{SylCase2c:id1}) holds.
Now, by (\ref{Z0SymmablesInWaZ}) and (\ref{Z0LongSymmablesInWaZ}), we have that $s_b(W(a, Z))=2\beta(t_b(W)-c)$, and so $s_b(W(a, Z))\sim2\beta(n-c)$ by (\ref{SylCase2c:id1}).
Since the number of $b$-syllables in $W(a,b)$ and $W(a,Z)$ must agree, up to $|P|$ of them, so $s_b(W(a,Z)) \sim s_b(W)=n$, we have $ 2\beta(n-c) \sim n$ via (\ref{approx1}), which rearranges to (\ref{SylCase2c:id2}).

We claim that the sequence of $b$-exponents in $W(a, b)$ contains at most $n-c$ changes of sign, where $n-c\sim\frac{n}{2\beta}$.
To verify this, recall that an $a$-syllable $a^{i_m}$ of $W$ is cancelling only when $j_{m-1}j_m>0$, that is, there is no change of sign in the $b$-syllables preceding and following $a^{i_m}$.
Thus there are $\leq n-c$ changes of sign of the $j_m$'s, and as $c \sim n \frac{2 \beta -1}{2\beta}$ we have $n-c\sim \frac{n}{2\beta}$, as claimed.

Next, we claim that the sequence of $b$-exponents in $W(a, Z)$ contains at least $s_{Z_0}(W(a, Z))-1$ changes of sign, and $s_{Z_0}(W(a, Z))-1\sim\frac{n}{\beta}$.
To verify this, note that in the freely reduced form of $W(a,Z)$ the $Z_0$'s appear in alternate occurrences of $Z_0$ and $Z_0^{-1}$, and so the sequence of $Z_0$-exponents in $W(a, Z)$ contains exactly $s_{Z_0}(W(a, Z))-1$ changes of sign, which gives the required lower bound for the $b$-exponents. To obtain the approximation for $s_{Z_0}(W(a, Z))-1$, note that substituting the identities (\ref{SylCase2c:id1}) and (\ref{SylCase2c:id2}) into (\ref{Z0SymmablesInWaZ}), gives $s_{Z_0}(W(a,Z))\sim \frac{n}{\beta}$, so $s_{Z_0}(W(a,Z))-1\sim \frac{n}{\beta}$, as claimed.

We now prove the lemma.
Write $W(a, Z)=P_1\circ Q$ and $a^kW^{-1}(a, b)=Q^{-1}\circ P_2$ where $P=P_1\circ P_2$.
As above, the sequence of $b$-exponents in $W(a, b)$ contains at most $n-c$ changes of sign, and so the same is true of $a^kW^{-1}(a, b)$, and hence also of $Q$. On the other hand, the sequence of $b$-exponents in $W(a, Z)=P_1Q$ contains at least $s_{Z_0}(W(a, Z))-1$ changes of sign, and so the sequence of $b$-exponents in $Q$ contains at least $s_{Z_0}(W(a, Z))-1-|P|$ changes of sign. Therefore, $n-c\geq s_{Z_0}(W(a, Z))-1-|P|$, and as $n-c\sim\frac{n}{2\beta}$ and $s_{Z_0}(W(a, Z))-1-|P|\sim\frac{n}{\beta}-|P|$, we can apply (\ref{approx2}) to get $\frac{n}{2\beta}+F' \gneq \frac{n}{\beta}-|P|$ for some computable constant $F'$. Setting $F:=2\beta(|P|+F')$, the result follows.
\end{proof}

%%%%%%%%%%%%%%%
%%-------------------------------%%
%%---summary proposition---%%
%%-------------------------------%%
%%%%%%%%%%%%%%%

We now summarise Lemmas \ref{lem:Case1non-cycRed}--\ref{lem:SylCase2b}.
%\ref{lem:Case1non-cycRed}\ref{lem:SylCase1}\ref{lem:Case2Z1has-one-t}\ref{lem:SylCase2a}\ref{lem:SylCase2b}.
In the following, the constant $B_1$ may be computed to be $12|Z|\times|P|+2|k|$.

\begin{proposition}
\label{prop:BoundSyl}
Suppose that $\varphi_Z$ is injective but not surjective, $Z=Z_0^{-1}Z_1Z_0a^q$ as in (\ref{formOfZ}) and $W(a,b)$ is as in (\ref{formOfW}).
If (\ref{mainEq}) holds then there exists a constant $B_1$, computable from $P$, $Z$ and $k$, such that $s(W)\leq B_1$.
\end{proposition}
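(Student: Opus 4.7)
The plan is to assemble the proposition by a single case analysis on the structure of $Z$ in the form $Z = Z_0^{-1}Z_1Z_0 a^q$ given by (\ref{formOfZ}), invoking one of Lemmas \ref{lem:Case1non-cycRed}--\ref{lem:SylCase2b} in each case. Since all of the technical syllable-counting has already been packaged into these lemmas, the remaining job is essentially bookkeeping: verify that the cases are exhaustive, verify the hypotheses of each lemma, and take $B_1$ to be the maximum of the resulting computable bounds.

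First I would dispose of the case $q = 0$ by a direct appeal to Lemma \ref{lem:Case1non-cycRed}, giving $s(W) \leq |P| + 2$. For the remainder I would assume $q \neq 0$ and split on $t_b(Z_1)$. When $t_b(Z_1) > 1$, Lemma \ref{lem:SylCase1} applies with $U := P$ and $V := a^k$, yielding $s(W) \leq 2(|P| + |k|) + 1$. When $t_b(Z_1) = 1$, the cyclic reducedness of $Z_1$ forces $Z_1 \equiv a^{p_0}b^{\epsilon}a^{p_1}$ for some $\epsilon = \pm 1$ and some integers $p_0, p_1$; the hypothesis $p_0 p_1 \geq 0$ required by Lemma \ref{lem:Case2Z1has-one-t} can be arranged by cyclically permuting $Z_1$ (if $p_0$ and $p_1$ have opposite signs, cyclic permutation produces $b^{\epsilon}a^{p_0 + p_1}$, i.e.\ the case with one of the two $a$-powers equal to $0$) and compensating by redefining $Z_0$, which does not affect $Z$ itself. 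Lemma \ref{lem:Case2Z1has-one-t} then yields a computable bound.

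The final case is $q \neq 0$ and $t_b(Z_1) = 0$, so $Z_1 \equiv a^r$ with $r \neq 0$ (the injectivity of $\varphi_Z$ forces $s_b(Z_0) \geq 1$ since otherwise $Z$ would be a power of $a$). I would split on $s_b^{(2)}(Z_0)$: if $s_b^{(2)}(Z_0) \geq 1$, apply Lemma \ref{lem:SylCase2a}; if $s_b^{(2)}(Z_0) = 0$, apply Lemma \ref{lem:SylCase2b}. In both sub-cases the bound is computable from $P$ and $Z$.

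Each case furnishes a computable upper bound on $s(W)$; taking $B_1$ to be the maximum of these bounds (conveniently $B_1 := 12|Z|\cdot|P| + 2|k|$ dominates them all, as suggested in the statement) finishes the proof. I do not expect any real obstacle here since the heavy lifting is in the preceding lemmas; the only point that requires a moment of care is arranging the hypothesis $p_0 p_1 \geq 0$ in the $t_b(Z_1) = 1$ case, and this is handled by the freedom in choosing the decomposition $Z = Z_0^{-1}Z_1Z_0a^q$.
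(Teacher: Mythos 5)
Your case analysis on $(q, t_b(Z_1), s_b^{(2)}(Z_0))$ is exhaustive, the hypotheses of each of Lemmas~\ref{lem:Case1non-cycRed}--\ref{lem:SylCase2b} are verified in the corresponding case, and this is exactly the (unwritten) argument the paper has in mind when it calls Proposition~\ref{prop:BoundSyl} a ``summary'' of those lemmas. One small remark: the condition $p_0p_1\geq0$ in the $t_b(Z_1)=1$ case is automatic, since the form (\ref{formOfZ}) already takes $Z_1$ cyclically reduced and $a^{p_0}b^{\epsilon}a^{p_1}$ is cyclically reduced precisely when $p_0p_1\geq0$, so the cyclic-permutation step you describe is unnecessary (though harmless).
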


%%%%%%%%%%%%%%%%%%%%%%%%%%%%%%%%%%%%%%%%%%%%%%%%%%%%%%%%%%%%%%%%%%%%%%%%%%%%
%%%-------------------------------------------------------------------------------------------------------------------------------------------------------------------------------------------------%%%
%%%-------------------------------------------------------------------------------------------------------------------------------------------------------------------------------------------------%%%
%%%------------------------------------------------Bounding the syllable lengths--------------------------------------------------------------------------------------------------%%%
%%%-------------------------------------------------------------------------------------------------------------------------------------------------------------------------------------------------%%%
%%%-------------------------------------------------------------------------------------------------------------------------------------------------------------------------------------------------%%%
%%%%%%%%%%%%%%%%%%%%%%%%%%%%%%%%%%%%%%%%%%%%%%%%%%%%%%%%%%%%%%%%%%%%%%%%%%%%

\p{Bounding syllable lengths}
Next we bound the length of the individual syllables in any solution $W$ to the relevant instances of the $\varphi_Z$-twisted-conjugacy problem. Combined with Proposition \ref{prop:BoundSyl}, with gives a bound on the number of syllables in $W$, we thus have a bound on $|W|$.

%%%%%%%%%%%
%--------------------------%
%--preparatory lemma-%
%--------------------------%
%%%%%%%%%%%

We first need a lemma which says that ``complete'' cancellation is impossible when forming $W(a, Z)a^kW^{-1}$. Note that here, $W(a, b)$ is implicitly non-empty.
\begin{lemma}
\label{lem:completeCancellation}
Suppose that $\varphi_Z$ is injective but not surjective, $Z=Z_0^{-1}Z_1Z_0a^q$ as in (\ref{formOfZ}) and $W(a,b)$ is as in (\ref{formOfW}). Then $\varphi_Z(W)a^kW^{-1}\not\in\langle a\rangle$.
\end{lemma}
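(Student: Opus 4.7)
The plan is to argue by contradiction: assume $\varphi_Z(W)a^kW^{-1} = a^m$ for some $m \in \mathbb{Z}$, and derive that $W$ must lie in $\langle a \rangle$, contradicting the fact that $W$ as in (\ref{formOfW}) ends with the non-trivial $b$-syllable $b^{j_n}$.

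First I would rearrange the assumed identity to $W = a^{-m}\varphi_Z(W)a^k$. Since $\varphi_Z(a)=a$, the image $\varphi_Z(F(a,b)) = \langle a, Z\rangle$ contains every power of $a$ as well as $\varphi_Z(W)$, so $W \in \varphi_Z(F(a,b))$. By injectivity of $\varphi_Z$, there is a unique $W_1 \in F(a,b)$ with $W = \varphi_Z(W_1)$. Substituting into $\varphi_Z(W)=a^mWa^{-k}$ yields
\[
\varphi_Z^2(W_1) = a^m\varphi_Z(W_1)a^{-k} = \varphi_Z(a^m W_1 a^{-k}),
\]
and applying injectivity once more gives $\varphi_Z(W_1) = a^m W_1 a^{-k}$. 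Hence $W_1$ satisfies the same twisted equation that $W$ does.

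Iterating this step produces a sequence $W = \varphi_Z(W_1) = \varphi_Z^2(W_2) = \cdots$ with $\varphi_Z(W_i) = a^m W_i a^{-k}$ for all $i$, so $W \in \varphi_Z^i(F(a,b))$ for every $i \geq 0$, and therefore $W \in \varphi_Z^\infty(F(a,b))$. Because $\varphi_Z$ is a non-surjective monomorphism, Turner's theorem \cite[Theorem 1]{Turner1996TestWords} (as used in the proof of Corollary \ref{corol:stableimage}) guarantees that $\varphi_Z^\infty(F(a,b))$ is a proper retract of $F(a,b)$, hence either trivial or infinite cyclic. Since $\varphi_Z(a)=a$ forces $a \in \varphi_Z^\infty(F(a,b))$, and since $a$ is primitive in $F(a,b)$, the only possibility is $\varphi_Z^\infty(F(a,b)) = \langle a \rangle$. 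Thus $W \in \langle a \rangle$, which contradicts the form (\ref{formOfW}) (with $j_n \neq 0$).

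The main obstacle I anticipate is really just the pull-back step, i.e., cleanly exploiting injectivity of $\varphi_Z$ together with the fact that $a$ belongs to the image, so that the equation transfers from $W$ to a genuine preimage $W_1$. Once this descent is in place, the identification of $\varphi_Z^\infty(F(a,b))$ with $\langle a \rangle$ is routine via Turner's theorem and the primitivity of $a$, and the syllable form (\ref{formOfW}) immediately delivers the contradiction.
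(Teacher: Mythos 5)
Your proof is correct, and it takes a genuinely different route from the paper's. The paper argues by a syllable-counting case analysis: it splits on $t_b(Z_1)$, reuses the inequality from Lemma \ref{lem:SylCase1} (getting an equality, since $U$ and $V$ are powers of $a$), and for the remaining case $Z_1=a^r$ it exploits the forced alternation of $Z_0^{\pm1}$ in $W(a,Z)$ to bound the number of cancelling $a$-syllables and derive $n\geq 2n$. Your argument instead rearranges the supposed identity to $\varphi_Z(W)=a^mWa^{-k}$, observes that $W_1:=a^{-m}Wa^k$ satisfies $W=\varphi_Z(W_1)$ and the same twisted equation, and iterates to conclude $W\in\varphi_Z^\infty(F(a,b))$. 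Since $\varphi_Z$ is a non-surjective monomorphism, Turner's theorem makes the stable image a proper free factor, hence trivial or cyclic; as $\varphi_Z(a)=a$ puts $a$ in the stable image and $a$ is not a proper power, $\varphi_Z^\infty(F(a,b))=\langle a\rangle$, so $W\in\langle a\rangle$, contradicting the terminal $b$-syllable $b^{j_n}$. Each step checks out: the pull-back is clean because $\varphi_Z(a)=a$ forces $a^{\pm m},a^{\pm k}$ into the image, and injectivity gives a unique preimage on which the identical equation holds.

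The trade-off is instructive. Your argument is shorter, requires no case analysis on $Z$, and uses none of the special structure of $Z$ beyond $\varphi_Z$ being a non-surjective monomorphism fixing $a$; it thus proves a cleaner and slightly more general statement. It does, however, import Turner's theorem (already used in Lemma \ref{lem:fixedprimitives} and Corollary \ref{corol:stableimage}, so no new dependencies). The paper's proof stays entirely inside the elementary syllable-counting toolkit that Section \ref{sec:solvingInstances} is building, which keeps that section self-contained and matches the style of the surrounding lemmas, at the cost of a longer case analysis. Either proof is valid; yours is the more conceptual of the two.
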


\begin{proof}
Suppose $\varphi_Z(W)a^kW^{-1}\in\langle a\rangle$, and we find a contradiction.
Suppose firstly that $t_b(Z_1)\geq 1$. As in the proof of Lemma \ref{lem:SylCase1}, write $W$ as a word $W_0(a, ba^{-q})$ over $a$ and $ba^{-q}$, where $t_b(W)=t_b(W_0)$ and the same working gives us that $0= 2s_b(W_0)t_b(Z_0)+t_b(W)(t_b(Z_1)-1)$, where we have equality rather than inequality because the words corresponding to $U$ and $V$ contain no $b$-syllables.
Now, $2s_b(W_0)t_b(Z_0)\geq0$ so $t_b(W)(t_b(Z_1)-1)\leq0$, and therefore either $t_b(W)=0$ or $t_b(Z_1)=1$ (by assumption, $t_b(Z_1)=0$ cannot happen). By assumption $W$ ends in a $b$-syllable so $t_b(W)=0$ cannot happen, so we have that $t_b(Z_1)=1$, and so $s_b(W_0)t_b(Z_0)=0$. If both $t_b(Z_1)=1$ and $t_b(Z_0)=0$ hold then $\varphi_Z$ is surjective, a contradiction. If $s_b(W_0)=0$ then $t_b(W_0)=0$, so as $t_b(W_0)=t_b(W)$ we have that $t_b(W)=0$, which again is a contradiction.

Hence, we have that $t_b(Z_1)=0$, and so $Z_1=a^r$.
Since $s_b(W(a,b))=n$, we also have $s_b(W(a,Z))=n$, and note that all $b$-syllables in $W(a,Z)$ appear in $Z_0$'s solely (as $Z_1=a^r$). Let $\beta=s_b(Z_0)$.
Now, $Z_0$ in $W(a,Z)$ alternates between $Z_0$ and $Z_0^{-1}$, which implies that after every $\beta$ $b$-syllables in $W(a,Z)$ there must be a change of sign in the exponent of the $b$-syllables (although there might be more changes in total, depending on the structure of $Z_0$), and this behaviour must be mirrored in $W(a,b)$. Recall from the definition of a cancelling syllable that a change of sign from $j_m$ to $j_{m+1}$ means there will be no cancellation between $Z^{j_m}$, $a^{i_m}$ and $Z^{j_{m+1}}$. This will lead to the number of cancellations $c\leq n - \frac{n}{\beta}$, and by (\ref{Z0SymmablesInWaZ}) we get $s_{Z_0}(W(a,Z))\geq 2\left(t_b(W)-\left(n-\frac{n}{t}\right)\right)$, and so $s_b(W(a,Z))=\beta s_{Z_0}(W(a,Z)) \geq 2\beta\left(t_b(W)-\left(n-\frac{n}{\beta}\right)\right)$. As $n=s_b(W(a,Z))$ and $t_b(W)\geq n$, this implies that $n\geq 2\beta \left(n - \left(n - \frac{n}{\beta}\right)\right)=2n$, which gives a contradiction.
\end{proof}

%%%%%%%%%
%-------------------%
%---main lemma---%
%--------------------%
%%%%%%%%%

We now give a bound on the lengths of the syllables in $W$. The bound is in terms of $P$, $Z$ and $s(W)$, and so is computable by Proposition \ref{prop:BoundSyl}.

\begin{lemma}
\label{lem:SylLength}
Suppose that $\varphi_Z$ is injective but not surjective, $Z=Z_0^{-1}Z_1Z_0a^q$ as in (\ref{formOfZ}) and $W(a,b)$ is as in (\ref{formOfW}). If (\ref{mainEq}) holds then every syllable of $W$ has length at most $|Z|(s(W)+2)+|P|$.
%$2|Z|\times s(W)+2|P|$.
\end{lemma}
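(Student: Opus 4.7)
I would argue by contradiction: suppose some syllable $\tau$ of $W$ has length exceeding $L:=|Z|(s(W)+2)+|P|$, and derive a length contradiction from (\ref{mainEq}). Writing $W=W_1\tau W_2$, the freely reduced $\varphi_Z(W)=W_1(a,Z)\cdot\varphi_Z(\tau)\cdot W_2(a,Z)$ contains a distinguished block coming from $\varphi_Z(\tau)$: if $\tau=a^i$ the block is an $a$-syllable of length at least $|i|-2|Z|$ (since only the trailing $a^q$ of an adjacent $Z^{\pm 1}$-factor can interact with it), while if $\tau=b^j$ the block is $Z^j$ of length $|j|\cdot|Z|$. In either case the interior of this block, in particular the $Z_1$-cores inside each copy of $Z$, is locked in by the form of $Z$ established in (\ref{formOfZ}) and is preserved by all reductions internal to $\varphi_Z(W)$.

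The remaining reductions in assembling $P=\varphi_Z(W)a^kW^{-1}$ occur only at the interface between $\varphi_Z(W)a^k$ and $W^{-1}$. The plan is to bound the depth of this interface cancellation combinatorially, using that each syllable of $W^{-1}$ can only cancel against at most $|Z|$ letters of $\varphi_Z(W)a^k$: a $b$-syllable of $W^{-1}$ eats into a $Z_0$-boundary of some $Z^{j_m}$ but then runs into the protected $Z_1$-core (which cannot be telescoped past, by Lemma \ref{lem:completeCancellation} applied to an appropriate suffix factorisation), while an $a$-syllable of $W^{-1}$ cancels only with a trailing $a^q$ of some $Z$-factor or with an $a^{i_l}$-syllable of $W$ proper. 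Since $W^{-1}$ has $s(W)$ syllables and at most two additional $|Z|$-packets of cancellation come from $a^k$ and the terminal $Z^{j_n}$-boundary of $\varphi_Z(W)$, the total reach of the interface cancellation into $\varphi_Z(W)a^k$ is at most $|Z|(s(W)+2)$.

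If the distinguished $\varphi_Z(\tau)$-block lies deeper than $|Z|(s(W)+2)$ from the right end of $\varphi_Z(W)a^k$, then at least $|\tau|-|Z|(s(W)+2)$ of its letters survive into reduced $P$, forcing $|P|\geq|\tau|-|Z|(s(W)+2)$, i.e.\ $|\tau|\leq L$, contradicting the assumption. If instead $\tau$ lies near the right-hand end of $W$, one symmetrises by applying the same argument to the rearranged equation $\varphi_Z(W_1)^{-1}PW_1=\varphi_Z(\tau)M\tau^{-1}$ with $M:=\varphi_Z(W_2)a^kW_2^{-1}$, using Lemma \ref{lem:completeCancellation} on $W_2$ to ensure $M\notin\langle a\rangle$ so that $\tau$ and $\tau^{-1}$ cannot telescope through $M$. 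The main obstacle is the rigorous justification of the ``each $W^{-1}$-syllable consumes at most $|Z|$ letters'' bound: one must verify that the $Z_1$-cores really do act as impassable barriers throughout the cascade of cancellations, which is the delicate bookkeeping of the proof and ultimately reduces to the non-telescoping principle already established in Lemma \ref{lem:completeCancellation}.
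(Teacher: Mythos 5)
Your high-level plan is the right one — localise a long syllable $\tau$, argue that its $\varphi_Z$-image is protected by the $Z_1$-cores, bound the cancellation depth, and deal with possible self-cancellation of $\tau$ against $\tau^{-1}$ via Lemma~\ref{lem:completeCancellation}. But the central quantitative claim, that ``each syllable of $W^{-1}$ consumes at most $|Z|$ letters of $\varphi_Z(W)a^k$'', does not hold as stated, and you acknowledge this yourself as ``the main obstacle''; unfortunately this is not mere bookkeeping but is exactly where the difficulty of the lemma lies.

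The issue is with the $a$-syllables. Since $\varphi_Z(a)=a$, the $a$-syllable $a^{i_l}$ of $W$ reappears verbatim (possibly adjusted by $a^{\pm q}$) as an $a$-syllable of $\varphi_Z(W)$, and its length $|i_l|$ is precisely one of the quantities we are trying to bound — so it is not controlled by $|Z|$. An $a$-syllable $a^{-i_m}$ of $W^{-1}$ that lands on such a syllable can therefore consume an unbounded number of letters of $\varphi_Z(W)a^k$, and your upper bound $|Z|(s(W)+2)$ on the total interface reach collapses. The paper's proof confronts this head-on: it shows that when a long $a$-syllable $a^{-i_m}$ of $W^{-1}$ cancels against an $a^{i_l+\lambda_l q}$ of $W(a,Z)$ with $|i_m|>2q+|P|$, a maximality argument forces $m=l$, so the long syllable is cancelling against its own image; this in turn forces a suffix of $W(a,Z)a^kW^{-1}$ of the form $a^pW'(a,Z)a^kW'^{-1}(a,b)$ to reduce to a power of $a$, which Lemma~\ref{lem:completeCancellation} forbids. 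Your symmetrisation step, applying Lemma~\ref{lem:completeCancellation} to $M=\varphi_Z(W_2)a^kW_2^{-1}$, is the right instinct, but it is stated only for $\tau$ ``near the right end'' and is not linked to the $a$-syllable self-cancellation phenomenon, which can occur for any position $m$. You would need to make the $m=l$ matching argument explicit (and also handle the edge case $W_2$ empty, where $M=a^k\in\langle a\rangle$ and Lemma~\ref{lem:completeCancellation} is silent) before your reach bound could be justified. As written, the proposal leaves the essential step unproven.

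Separately, a smaller inaccuracy: the paper also needs the $q=0$ and $q\neq 0$ cases treated differently, since the ``$Z_0$-syllables have length $1$'' observation (identity~(\ref{Z0LongSymmablesInWaZ})) holds only when $q\neq 0$; your proposal does not distinguish these, and the $Z_1$-barrier reasoning is simplest when $q\neq 0$.
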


\begin{proof}
%Recall that $W(a, b)=a^{i_1}b^{j_1} \dots a^{i_n}b^{j_n}$ and that $P=W(a,Z)a^kW^{-1}(a,b)$, which by Proposition \ref{prop:BoundSyl} implies $s(W)$ is bounded.
%
Note that anytime a syllable of $W^{-1}(a,b)$ is not affected by any cancellations within $W(a,Z)a^kW^{-1}(a,b)$, that syllable clearly has length $\leq|P|$, and if it is affected by a bounded amount $B$, then that syllable has length $\leq B+|P|$.

Suppose first that $q=0$.
We firstly consider the $b$-syllables of $W(a,Z)$, each of which is wholly contained in $Z_0^{-1}Z_1^{j_m}Z_0$ for some $m$, and hence has length at most $|j_m|\times|Z|$. Suppose that $b^{j_l}$ (partially) cancels when we form $W(a,Z)a^kW^{-1}(a,b)$.
As $\varphi_Z$ is non-surjective, $t_b(Z)>1$.
Therefore, if $Z$ consists of a single $b$-syllable then when we form $W(a,Z)a^kW^{-1}(a,b)$ cancellation cannot progress beyond the first $b$-term of $W^{-1}(a,b)$, and so $l=n$ and we easily see that $|j_l|=|j_n|<|Z|+|P|$.
On the other hand, if $Z$ contains an $a$-syllable then $b^{j_l}$ must (partially) cancel with a $b$-syllable contained in $Z_0^{-1}Z_1^{j_m}Z_0$ for some $m<l$, and we see inductively that the largest possible cancellation which occurs in $b^{j_l}$ in this situation is $|Z|\times s(W)$.
Next we consider the $a$-syllable of $W(a,Z)$, each of which is either $a^{i_m}$ for some $m$, or is wholly contained in $Z_0^{-1}Z_1^{j_m}Z_0$ for some $m$. Suppose that $a^{i_l}$ (partially) cancels when we form $W(a,Z)a^kW^{-1}(a,b)$.
As noted above, for an $a$-syllable to cancel when we form $W(a,Z)a^kW^{-1}(a,b)$, $Z$ contains an $a$-syllable. Therefore, $a^{i_l}$ must (partially) cancel with an $a$-syllable contained in $a^{i_m}$ or $Z_0^{-1}Z_1^{j_m}Z_0$ for some $m<l$, and we see inductively that the largest possible cancellation which occurs in $a^{i_l}$ this situation is $|Z|\times s(W)$. In conclusion, any syllable of $W$ is affected by a bounded amount $B=|Z|\times s(W)$ of cancellation, and so every syllable of $W$ has length $\leq |Z|\times s(W)+|P|$, as required.
%The reduced form of $W(a,Z)$, seen as a word over $a$ and $Z$, ends in a $Z$-syllable, which moreover ends in a $b$-syllable.
%If $k\neq 0$ then there are no cancellations of any syllables in $W^{-1}$ when forming $W(a,Z)a^kW^{-1}(a,b)$ so the conclusion follows. If $k=0$ then the first syllable of $W^{-1}(a,b)$, that is, $b^{-j_n}$, cannot fully cancel with $Z^{j_n}$ since $Z\neq b$, so as above, there are then no cancellations of any syllables in $W^{-1}$ when forming $W(a,Z)a^kW^{-1}(a,b)$ beyond $b^{-j_n}$. Therefore, apart from possible $j_n$, each syllable of $W$ has the required bound. Finally, it can be easily shown that $|j_n|< |Z|+|P|$ considering the cases when $Z_0$ is trivial or not.

Now suppose $q \neq 0$. Then the reduced form of $W(a,Z)$ can be seen as a word over $a$, $Z_0$ and $Z_1$, where all $Z_0$-syllables have length $1$ by (\ref{Z0LongSymmablesInWaZ}), and all $Z_1$-syllables have length $\leq s(W)$, which is bounded. To see the latter, note that $Z^k$, for $|k|\geq 2$, has the form $Z_0^{-1} Z_1 Z_0 a^q Z_0^{-1} Z_1 Z_0 a^q \cdots$ (or its inverse), so contains no $Z_1$-syllables of length $\geq 2$, and the only way to obtain longer $Z_1$-syllables is by having consecutive cancelling $a$-syllables, which implies consecutive $j_m$'s (exponents of $b$ in $W$) which are either all $+1$ or all $-1$. Since the number of $j_m$'s is bounded by $s(W)$, the number of consecutive $j_m$'s of value $+1$ or $-1$ is also bounded by $s(W)$, and so any $Z_1$-syllable in $W(a,Z)$ has length $<s(W)$. From this it follows that the length of any $b$-syllable in $W(a,Z)$ is bounded, as it appears entirely within a subword of $Z_0$ and $Z_1$ in $W(a,Z)$, of which the longest have the form $Z_0Z_1^pZ_0$, where $p<s(W)$ as established before. This implies that any $b$-syllable in $W^{-1}(a,b)$ (partially) cancelling with a $b$-syllable in $W(a,Z)$ must have length $<2|Z_0|+s(W)|Z_1|+|P|<|Z|(s(W)+2)+|P|$.

It remains to bound the length of the $a$-syllables in $W$. By the argument above, all $a$-syllables in $W(a,Z)$ which appear within subwords of $Z_0$ and $Z_1$ are bounded, so if an $a$-syllable in $W^{-1}(a,b)$ will (partially) cancel with such an $a$-syllable, it will have length $<|Z|(s(W)+2)+|P|$. Suppose now that an $a$-syllable of $W^{-1}(a,b)$ has length $>2q+|P|$ and cancels with an $a$-syllable in $W(a,Z)$ that is not part of any subword of $Z_0, Z_1$ (or $a^q$); if no such $a$-syllable exists then we are done. Let $a^{-i_m}$ be the syllable satisfying these conditions and where $m$ is largest possible in the set $\{1, \dots, n\}$.
Then $a^{-i_m}$ cancels with some $a^{i_{l}+\lambda_lq}$, $1\leq r\leq n$ and $\lambda_l\in\{-1, 0, 1\}$, within $W(a,Z)$. Now, $m=l$ and to see this first suppose that $l<m$. Then the occurrence of $a^{i_m+\lambda_mq}$ in $W(a,Z)$, $\lambda_m\in\{-1, 0, 1\}$, which is the $a$-syllable of $W(a, Z)$ containing the image of $a^{i_m}$, does not cancel with any syllable in $W^{-1}$, by maximality of $m$, but this syllable must (partially) cancel as $|i_m|>q+|P|$, a contradition. Similarly, if $l>m$ then the $a^{i_l}$ contained in $W(a, b)$ does not cancel when forming $W(a, Z)a^kW^{-1}$, by maximality of $m$, but as $|i_m+\lambda_mq-i_l|<|P|$, and as $i_m>2q+|P|$, we get that $q<i_l$, so this syllable must (partially) cancel, a contradiction.
The $a^{i_l}$ in $W(a,Z)$ and the $a^{-i_l}$ in $W^{-1}$ can cancel only if the appropriate suffixes of $W(a,Z)$ and $W^{-1}$ cancel completely, that is, $a^pW'(a,Z)a^kW'^{-1}(a,b)=1$, where $W'$ is a suffix of $W$ starting with a $b$-syllable, and $p\in \mathbb{Z}$. This is impossible by Lemma \ref{lem:completeCancellation}, and the result follows.
\end{proof}

%%%%%%%%%%%%%%%%%%%%%%%%%%%%%%%%%%%%%%%%%%%%%%%%%%%%%%%%%%%%%%%%%%%%%%%%%%%%
%%%-------------------------------------------------------------------------------------------------------------------------------------------------------------------------------------------------%%%
%%%-------------------------------------------------------------------------------------------------------------------------------------------------------------------------------------------------%%%
%%%------------------------------------------------Solving the twisted conj. problem here--------------------------------------------------------------------------------------------------%%%
%%%-------------------------------------------------------------------------------------------------------------------------------------------------------------------------------------------------%%%
%%%-------------------------------------------------------------------------------------------------------------------------------------------------------------------------------------------------%%%
%%%%%%%%%%%%%%%%%%%%%%%%%%%%%%%%%%%%%%%%%%%%%%%%%%%%%%%%%%%%%%%%%%%%%%%%%%%%

We now solve the $\varphi_Z$-twisted-conjugacy problem for $P$ and $a^k$, that is, prove Lemma \ref{lem:twistedConjugacy}.

\begin{proof}[Proof of Lemma \ref{lem:twistedConjugacy}]
%Recall that $Z=Z_0^{-1}Z_1Z_0a^q$.
Recall from the preamble to (\ref{formOfZ}) that $P$ and $a^k$ are $\varphi_Z$-twisted conjugate, for $Z=Z_0^{-1}Z_1Z_0a^q$, if and only if $a^{-q_0}P$ and $a^{k-q_0}$ are $\varphi_{Z_0^{-1}Z_1Z_1a^{q_0+q_1}}$-twisted conjugate. Hence, we may assume that $Z$ has the form (\ref{formOfZ}).

Suppose $P$ and $a^k$ are $\varphi_Z$-twisted-conjugate, so by assumption there exists a word $W \in F(a,b)$ ending in a $b$-syllable such that (\ref{mainEq}) holds. By Proposition \ref{prop:BoundSyl}, there is a bound $B_1$, algorithmically computable from $P$, $Z$ and $k$, on the number of syllables on $W$. By Lemma \ref{lem:SylLength}, there is a bound $B_2$, algorithmically computable from $P$, $Z$ and $B_1$, on the length of the syllables in $W$. Now, the length of a word is simply the sum of the individual syllable lengths, and so $|W|\leq B_1\cdot B_2$. Hence, there is an algorithmically computable bound on $|W|$. Therefore, in order to determine whether or not $P$ and $a^k$ are $\varphi_Z$-twisted-conjugate it is sufficient to check for every word $W$ with $|W|\leq B_1\cdot B_2$ whether or not (\ref{mainEq}) holds. If such a word is found then $P$ and $a^k$ are $\varphi_Z$-twisted-conjugate, otherwise they are not.
\end{proof}

%%%%%%%%%%%%%%%%%%%%%%%%%%%%%%%%%%%%%%%%%%%%%%%%%%%%%%%%%%%%%%%%%%%%%%%%%%%%
%%%-------------------------------------------------------------------------------------------------------------------------------------------------------------------------------------------------%%%
%%%-------------------------------------------------------------------------------------------------------------------------------------------------------------------------------------------------%%%
%%%------------------------------------------------Proof of main theorem--------------------------------------------------------------------------------------------------%%%
%%%-------------------------------------------------------------------------------------------------------------------------------------------------------------------------------------------------%%%
%%%-------------------------------------------------------------------------------------------------------------------------------------------------------------------------------------------------%%%
%%%%%%%%%%%%%%%%%%%%%%%%%%%%%%%%%%%%%%%%%%%%%%%%%%%%%%%%%%%%%%%%%%%%%%%%%%%%

\subsection{From outer fixed points to fixed points}
\label{sec:MainProof}
We now use our solution to instances of the twisted conjugacy problem to prove Theorem \ref{thm:basisexists}.
% Firstly, we solve Equation (\ref{mainEq}).

\begin{lemma}
\label{lem:FindingUW}
Let $\psi\in\emo(F(a, b))$ be an injective, non-surjective endomorphism which has $[a]$ as an outer fixed point. There exists an algorithm with input $\psi$ which determines whether or not $[a]\cap\fix(\psi)\neq \emptyset$, and if $[a]\cap\fix(\psi)\neq \emptyset$ then the algorithm outputs a basis for $\fix(\psi)$.
\end{lemma}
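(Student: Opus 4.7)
The plan is to reduce the problem, via Lemma \ref{lem:WordstoFP}, to an instance of the $\varphi_Z$-twisted conjugacy problem, and then to invoke Lemmas \ref{lem:fromOuterToActualALGORITHM} and \ref{lem:twistedConjugacy}. First I would algorithmically extract a word $P\in F(a,b)$ with $\psi(a)=P^{-1}aP$ by peeling off matching outer letters from $\psi(a)$ until the cyclically reduced core $a$ remains; this is possible precisely because $[a]$ is an outer fixed point of $\psi$. Set $Q:=\psi(b)$ and $Z:=PQP^{-1}$, and consider the endomorphism $\varphi_Z$. A direct check shows $\psi=\tau\circ\varphi_Z$, where $\tau$ is the inner automorphism $g\mapsto P^{-1}gP$. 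Since $\tau$ is an automorphism, $\varphi_Z$ inherits injectivity and non-surjectivity from $\psi$, placing us squarely in the hypotheses of Lemmas \ref{lem:fromOuterToActualALGORITHM} and \ref{lem:twistedConjugacy}.

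By Lemma \ref{lem:WordstoFP}, $[a]\cap\fix(\psi)\neq\emptyset$ if and only if there exist $W\in F(a,b)$ and $k\in\mathbb{Z}$ satisfying $P=\varphi_Z(W)a^kW^{-1}$. The decidability hypothesis of Lemma \ref{lem:fromOuterToActualALGORITHM}---that the $\varphi_Z$-twisted conjugacy problem is solvable for pairs $(P,a^k)$---is precisely the content of Lemma \ref{lem:twistedConjugacy}. Feeding $P$ into the algorithm of Lemma \ref{lem:fromOuterToActualALGORITHM} therefore either returns a witness pair $(W,k)$ or certifies that none exists. In the latter case the algorithm reports $[a]\cap\fix(\psi)=\emptyset$ and halts.

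In the former case, the computation in the proof of Lemma \ref{lem:WordstoFP} shows directly that $x:=WaW^{-1}$ is a fixed point of $\psi$. Since $x$ is conjugate to the primitive element $a$ it is itself primitive, and since $\psi$ is an injective non-surjective endomorphism, Lemma \ref{lem:fpUnique} forces $\fix(\psi)$ to be either trivial or infinite cyclic on a primitive generator. As $x\neq 1$ is a primitive fixed element, we conclude $\fix(\psi)=\langle x\rangle$, so the algorithm outputs $\{WaW^{-1}\}$ as a basis. The genuine technical difficulty has already been absorbed by Lemmas \ref{lem:fromOuterToActualALGORITHM} and \ref{lem:twistedConjugacy} (and the syllable-counting estimates of Section \ref{sec:solvingInstances}); the only delicate point in the present lemma is confirming the injectivity and non-surjectivity of $\varphi_Z$, which the factorisation $\psi=\tau\circ\varphi_Z$ handles at a stroke.
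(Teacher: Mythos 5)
Your proof is correct and follows the same route as the paper: reduce via Lemma~\ref{lem:WordstoFP} to solving Equation~(\ref{mainEq}), decide this via Lemmas~\ref{lem:fromOuterToActualALGORITHM} and~\ref{lem:twistedConjugacy}, and finish with Lemma~\ref{lem:fpUnique}. Your explicit factorisation $\psi=\tau\circ\varphi_Z$, which verifies that $\varphi_Z$ is injective and non-surjective (a hypothesis of both subsidiary lemmas that the paper's proof leaves implicit), is a small but welcome addition of rigour.
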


\begin{proof}
Given $\psi$ as in the hypothesis, one can easily find words $P,Q$ such that $\psi({a})=P^{-1}{a}P$ and $\psi({b})=Q$.  Let $Z=PQP^{-1}$.
By Lemma \ref{lem:WordstoFP}, $[a]\cap\fix(\psi)\neq \emptyset$ if and only if there exist $W \in F(a,b)$ and $k\in\mathbb{Z}$ satisfying
 Equation (\ref{mainEq}). By applying Lemma \ref{lem:twistedConjugacy} to Lemma \ref{lem:fromOuterToActualALGORITHM}, we can algorithmically determine the existence of $W$ and $k$, and therefore we can determine whether $[a]\cap\fix(\psi)\neq \emptyset$.
 
If this intersection is non-empty then we can algorithmically compute an element $z$ of $\fix(\psi)$ which is not a proper power (indeed, it is not hard to see that $z:=W({a},{b}){a}W^{-1}({a},{b})$ is such an element, where $W$ is as in the proof of Lemma \ref{lem:FindingUW}). By Lemma \ref{lem:fpUnique}, the subgroup $\fix(\psi)$ is infinite cyclic and so $\fix(\psi)=\langle z\rangle$. Output $z$ as the basis for $\fix(\psi)$. 
 \end{proof}

Finally, we prove Theorem \ref{thm:basisexists}.

\begin{proof}[Proof of Theorem \ref{thm:basisexists}]
Let $\psi\in\emo(F(a, b))$.
Determine, via for example Stallings' foldings, whether or not $\psi$ is injective or surjective.
If $\psi$ is surjective then it is an automorphism and the result is known \cite{Bogopolski2016algorithm}.
If $\psi$ is not injective then Lemma \ref{lem:nonInjCase} produces a basis for $\fix(\psi)$.

If $\psi$ is injective and non-surjective then, by Theorem \ref{thm:OuterFPClassification}, $\psi$ has at most two maximal outer fixed points (up to inversion).
Compute these, via the algorithm of Lemma \ref{lem:algo}.
If $\psi$ has no maximal outer fixed points then $\fix(\psi)=\{1\}$ and so output the empty set as the basis for $\fix(\psi)$.

If $\psi$ has, up to inversion, a single maximal outer fixed point $[x]$ then compute an element $y\in F(a, b)$ such that $\{x, y\}$ forms a basis for $F(a,b)$. Change the basis of $F(a, b)$ from $\{a,b\}$ to $\{x,y\}$. Note that $\fix(\psi)=\{1\}$ if and only if $\fix(\psi)\cap[x]=\emptyset$. Run the algorithm of Lemma \ref{lem:FindingUW} on the free group $F(x,y)$, and if a basis element $z$ of $\fix(\psi)$ is found then output it. Else, $\fix(\psi)=\{1\}$ and so output the empty set as the basis for $\fix(\psi)$.

If $\psi$ has, up to inversion, two maximal outer fixed points $[x]$ and $[y]$ (stored in terms of their representatives $x$ and $y$), then, via the algorithm of Theorem \ref{thm:OuterFPClassification}, compute a representative $\widetilde{y}\in [y]$ such that $\{x, \widetilde{y}\}$ forms a basis for $F(a,b)$. Change the basis of $F(a, b)$ from $\{a,b\}$ to $\{x,\widetilde{y}\}$.
Note that $\fix(\psi)=\{1\}$ if and only if $\fix(\psi)\cap[x]=\fix(\psi)\cap[\widetilde{y}]=\emptyset$. Run the algorithm of Lemma \ref{lem:FindingUW} on the free group $F(x,\widetilde{y})$, and if a basis element $z$ of $\fix(\psi)$ is found then output it. Else, $\fix(\psi)\cap[x]$ is empty so run the algorithm of Lemma \ref{lem:FindingUW} on the free group $F(\widetilde{y}, x)$, and again if a basis element $z$ of $\fix(\psi)$ is found then output it. Else, $\fix(\psi)=\{1\}$ and so output the empty set as the basis for $\fix(\psi)$.
\end{proof}

\bibliographystyle{amsalpha}
\bibliography{BibTexBibliography}
\end{document}